\long\def\eatit#1{}
\newtheorem{thm}{Theorem}[section]
\newtheorem{prop}[thm]{Proposition}
\newtheorem{lem}[thm]{Lemma}
\newtheorem{cor}[thm]{Corollary}
\newtheorem{conj}[thm]{Conjecture}
\theoremstyle{definition}
\newtheorem{ques}[thm]{Question}
\newtheorem{rem}[thm]{Remark}
\newcommand{\pr}[1]{{{\bf P}^{#1}}}
\begin{document}
\title{Containment results for ideals of various configurations of points in $\pr N$}


\author{Cristiano Bocci}
\address{Dipartimento di Ingegneria dell'Informazione e Scienze Matematiche\\
Universit\`a degli Studi di Siena\\
Pian dei mantellini, 44\\
53100 Siena, Italy}
\email{cristiano.bocci@unisi.it}
\author{Susan Cooper}
\address{Department of Mathematics\\
Central Michigan University\\
Mount Pleasant, Mich. 48859 USA}
\email{s.cooper@cmich.edu}
\author{Brian Harbourne}
\address{Department of Mathematics\\
University of Nebraska\\
Lincoln, NE 68588-0130 USA}
\email{bharbour@math.unl.edu}

\thanks{The first author was partially supported by GNSAGA of INdAM (Italy). The third 
author's work on this project
was sponsored by the National Security Agency under Grant/Cooperative
agreement ``Advances on Fat Points and Symbolic Powers,'' Number H98230-11-1-0139.
The United States Government is authorized to reproduce and distribute reprints
notwithstanding any copyright notice. All three authors thank the referee for his 
careful reading of this paper and for his
helpful comments.}

\begin{abstract}
Guided by evidence coming from a few key examples and attempting to unify previous work
of Chudnovsky, Esnault-Viehweg, Eisenbud-Mazur, Ein-Lazarsfeld-Smith, 
Hochster-Huneke and Bocci-Harbourne, 
Harbourne and Huneke recently formulated a series of conjectures 
that relate symbolic and regular powers of
ideals of fat points in $\pr N$. In this paper we propose another conjecture
along the same lines (Conjecture 3.9), and we verify it and the
conjectures of Harbourne and Huneke for a variety of configurations of points.  
\end{abstract}

\date{February 17, 2013}

\subjclass[2000]{Primary: 13F20, 
14C20; 
Secondary:  13A02, 
13C05, 
14N05} 

\keywords{evolutions, symbolic powers, fat points, homogeneous ideals, polynomial rings, projective space}

\maketitle

\section{Introduction}

\subsection{Historical Overview}
The difference between ordinary and symbolic powers of ideals underlies many fundamental problems
in algebraic geometry and commutative algebra. A manifestation of these differences in algebraic 
geometry is the occurrence of non-linearly normal embeddings of varieties $V$ in projective space,
or, more generally, the typical lack of surjectivity of canonical maps of the form
$H^0(V, {\mathcal F})^{\otimes r}\to H^0(V, {\mathcal F}^{\otimes r})$,
given a sheaf of modules ${\mathcal F}$ on $V$. In commutative algebra
these differences are related to the occurrence of associated primes
for powers $I^r$ of an ideal which are not associated primes for $I$ itself. 

One of the simplest contexts of interest in this regard is that of ideals of points in projective space.
So consider the ideal $I$ of a finite set of points $P_1,\ldots,P_n\in\pr N$.
Thus $I$ is the radical ideal $I=\cap_j I(P_j)$ in the polynomial ring $K[\pr N]=K[x_0,\ldots,x_N]$
over the ground field $K$, where $I(P_j)$ is the ideal generated by all homogeneous
polynomials (i.e., by all forms) which vanish at each point $P_j$.
The symbolic power $I^{(m)}$ of $I$ in this case is $\cap_j (I(P_j)^m)$.
More generally, if $I\subset K[\pr N]$ is any homogeneous ideal,
then the associated primes $P_i$ for $I$ are homogeneous
and we have a primary decomposition $I=\cap_i Q_i$ where each
$Q_i$ is homogeneous and $P_i$-primary. Let $P_j'$ be the associated primes
for $I^m$ and let $I^m=\cap_jQ_j'$ be a primary decomposition such that
each $Q_j'$ is homogeneous and $P_j'$-primary. Then the $m$-th symbolic power
is $I^{(m)}=\cap_{j:P_j'\subseteq P_i \text{ for some }i}Q_j'$.

For simplicity, we will assume $K$ is algebraically closed.
In some cases we will assume $K$ has characteristic 0, but only where we 
say this explicitly.

Given a homogeneous ideal $(0)\ne J\subseteq K[\pr N]$, $\alpha(J)$
denotes the least degree of a non-zero form in $J$.
It is easy to see that $\alpha(J^r)=r\alpha(J)$, 
but the behavior of $\alpha(J^{(r)})$ is much more complicated and not well-understood. 
For an ideal $I$ of a finite set of points of $\pr N$ with $K=\mathbb{C}$, 
Skoda \cite{refSk}, in work on complex functions with applications
to number theory, sharpened a result of Waldschmidt \cite{refW}
by showing $\alpha(I^{(m)})/m\geq \alpha(I)/N$ for all $m\geq 1$. 
A further refinement, $\alpha(I^{(m)})/m\geq \alpha(I^{(n)})/(n+N-1)$, is given in \cite[Lemme 7.5.2]{refW2}.
Chudnovsky \cite{refCh} improved the original Waldschmidt-Skoda bound when $N=2$ by showing
$\alpha(I^{(m)})/m\geq( \alpha(I)+1)/2$ for all $m\geq 1$ (over any field)
and conjectured for any $N$ that $\alpha(I^{(m)})/m\geq( \alpha(I)+N-1)/N$.
Esnault-Viehweg \cite{refEV} (using methods of complex algebraic geometry
such as vanishing theorems) made partial progress towards
these conjectures by showing $\alpha(I^{(m)})/m\geq( \alpha(I^{(n)})+1)/(n+N-1)$ for $m,n\geq1$.

On a different but actually closely related tack,
Ein-Lazarsfeld-Smith \cite{refELS} (using multiplier ideals) and Hochster-Huneke \cite{refHoHu}
(using tight closure) showed that $I^{(rN)}\subseteq I^r$ for all $r>0$
(as one case of more general results).
This raises the question of what the smallest constant $c$ is such that $I^{(m)}\subseteq I^r$ whenever
$m > cr$. The first and third authors \cite{refBH2} showed in fact that $c=N$ is optimal
(in the sense that for each $c<N$, there is an ideal $I$ of points  in $\pr N$ such that 
$I^{(m)}\subseteq I^r$ fails for some $m$ and $r$ with $m>cr$).
The third author (see \cite{refB. et al}), following up on questions of Huneke 
and with the goal of obtaining tighter containments, showed for some ideals $I$ that
$I^{(Nr-(N-1))}\subseteq I^r$ holds for all $r>0$, and
conjectured that this holds for all $I$.
Motivated by this, by the third author's observation that
$I^{(rN)}\subseteq I^r$ implies Skoda's bound in a characteristic free way
(see \cite{refHaHu}; also see the discussion
in \cite{refSc}---the latter paper also obtains Skoda's result for positive characteristics, 
using methods growing out of tight closure) and by the 
Eisenbud-Mazur \cite{refEM} conjecture on evolutions,
the third author and Huneke \cite{refHaHu} formulated additional conjectures,
refining previous conjectures and tightening them further by considering
containments of $I^{(m)}$ in products of the form $M^jI^r$, where
$M\subset K[\pr N]$ is the ideal generated by the variables.

Other than theoretical considerations, these new conjectures are based on only a
few key examples. The goal of the present paper is to collect together what is known,
and to broaden the base of support
of these conjectures by proving additional cases of the conjectures.
We also propose a new conjecture, Conjecture \ref{ourconj}, along the same lines as those
of \cite{refHaHu}, and we verify this new conjecture in a range of cases.

\subsection{Technical Overview}
Although questions of containments of symbolic powers in ordinary powers
is of interest in general (and there is some evidence that the conjectures of
\cite{refHaHu} hold more generally, not just for ideals of points),
symbolic powers of ideals of {\it fat points} are of special interest,
both for their conceptual simplicity and as a starting point for trying to understand
these containment problems. To recall, given a finite set of distinct 
points $P_1, \ldots, P_n$ in $K[\pr N]$ and non-negative integers $m_1, \ldots, m_n$, a fat 
point subscheme is the subscheme defined by an ideal of the form
$I = I(P_1)^{m_1} \cap I(P_2)^{m_2} \cap \cdots \cap I(P_n)^{m_n}$, 
where $I(P_i)$ is the ideal generated by the forms that vanish at $P_i$.  The $m$th symbolic 
power of such an ideal turns out to be 
$I^{(m)} = I(P_1)^{mm_1} \cap I(P_2)^{mm_2} \cap \cdots \cap I(P_n)^{mm_n}$.

In the paper \cite{refHaHu}, Harbourne and Huneke consider the following general questions
(among others):

\begin{ques}\cite[Questions 1.3, 1.4 and Conjecture 4.1.5]{refHaHu}\label{q}
Let $R = K[\pr N]$ and $M = (x_0, \ldots, x_N)$ be the maximal homogeneous ideal of $R$.
Let $I \subseteq R$ be a homogeneous ideal.  
\begin{enumerate}
\item For which $m, i$ and $j$ do 
we have $I^{(m)} \subseteq M^jI^i$?
\item For which $j$ does $I^{(rN)} \subseteq M^jI^r$ hold for all homogeneous 
ideals $I \subseteq R$ and all $r$?
\item For which $j$ does $I^{(rN-N+1)} \subseteq M^jI^r$ hold, given that
$I^{(rN-N+1)} \subseteq I^r$ holds for all $r$?
\end{enumerate}
\end{ques} 

The first question is a natural outgrowth of the Eisenbud-Mazur conjecture
(which concerns containment of symbolic squares $P^{(2)}$ of prime ideals $P$
in $MP$). Given that it is known that $I^{(rN)} \subseteq I^r$ holds for all $r>0$,
the second question arises naturally if one tries to decrease the gap
between $I^{(rN)}$ and $I^r$. Another way to decrease the gap is by making
the exponent of the symbolic power smaller. As discussed above, one cannot in general
do this by making the coefficient $N$ of $r$ smaller. This suggested subtracting 
something off, which led to the conjecture that $I^{(rN-(N-1))} \subseteq I^r$ 
\cite[Conjecture 8.4.2]{refB. et al}, and
given this conjecture it is natural to ask if one can decrease the gap further. 
This leads to the third question.

In the spirit of Question \ref{q}, Harbourne and Huneke state a series of conjectures 
(see Section \ref{theconjectures}) involving containment of 
symbolic powers of ideals in their regular powers,
as well as bounding the initial degrees of symbolic powers in terms of the initial degrees
of the ideal itself.  We consider these conjectures specialized to 
various configurations of points.  In Section \ref{prelim} we recall some facts
and prove a few others that will be useful later on.   
In Section \ref{theconjectures} we state the conjectures of interest.
In Section \ref{symisord} we verify that the conjectures hold  
under the assumption that the symbolic and ordinary powers are the same, such as 
when the points comprise a complete intersection. In Section \ref{oddsmoothconic}
we consider the case of points on smooth plane conics. In Section \ref{stars}
we study certain important special point sets of $\pr N$ called star configurations.
In Section \ref{HyperplaneSection}, we look at
sets of points contained in a hyperplane, as a corollary of which we recover and extend
a result of \cite{refDu}.
In Section \ref{general} we investigate general sets of points in the plane.  
Finally, we conclude in Section \ref{additional} 
with a few additional characteristic 0 results for $\pr N$.

\section{Preliminaries}\label{prelim}

Given a homogeneous ideal $0\ne I\subseteq R=K[{\bf P}^N]$, let 
$\alpha(I)$ be the least degree $t$ such that the homogeneous
component $I_t$ in degree $t$ is not zero. 
Thus $\alpha$ is, so to speak, the degree
in which the ideal begins (i.e., the degree of a generator of least degree).
Throughout the paper, since it will always be clear from context what the ring $K[{\bf P}^N]$ is,
we will simply use $M$ to denote the maximal proper homogeneous ideal of $K[{\bf P}^N]$
(i.e., the irrelevant ideal $M=(x_0,\ldots,x_N)\subseteq K[{\bf P}^N]=K[x_0,\ldots,x_N]$).
Thus $\alpha(I)$ is also the $M$-adic order of $I$ (i.e., the largest $t$ such that
$I\subseteq M^t$). 

The Hilbert function of $I$ is the function $h_I(t)=\dim_K I_t$ for $t\geq 0$.
For $t\gg0$, $h_I$ is a polynomial.
Let $\tau(I)$ be the least degree $t$ such that
the Hilbert function  becomes equal to the Hilbert polynomial of $I$,
let $\sigma(I)=\tau(I)+1$ and let $0\to F_N\to\cdots\to F_0\to I\to0$
be a minimal free resolution 
of $I$ over $R$, where $F_i$ as a graded $R$-module is
$\oplus R[-b_{ij}]$. Then the {\it Castelnuovo-Mumford regularity} $\operatorname{reg}(I)$
of $I$ is the maximum over all $i$ and $j$ of $b_{ij}-i$.

We say that $I$ is saturated if $M$ is not an associated prime of $I$.
The saturation $\operatorname{sat}(I)$ of $I$ is the smallest homogeneous ideal
containing $I$ which is saturated.
The saturation degree $\operatorname{satdeg}(I)$ of $I$
is the least degree $s$ such that $(\operatorname{sat}(I))_t=I_t$ for all
$t\geq s$. If $I$ defines a 0-dimensional subscheme of ${\bf P}^N$,
then $\operatorname{reg}(I)$ is the maximum
of $\operatorname{satdeg}(I)$ and $\sigma(\hbox{sat}(I))$, and 
so we always have $\operatorname{reg}(I)\geq \operatorname{satdeg}(I)$
and see that $\operatorname{reg}(I)=\sigma(I)$ in the case that
$I$ is saturated (see \cite{refGGP}).

We can now recall a Postulational Containment Criterion that will be useful in this paper:

\begin{lem}[Postulational Criterion 2, \cite{refBH2}]\label{postcrit2}
Let $I\subseteq K[{\bf P}^N]$ be a homogeneous ideal 
(not necessarily saturated) defining
a 0-dimensional subscheme.
If $r\operatorname{reg}(I) \le \alpha(I^{(m)})$, then $I^{(m)} \subseteq I^r$.
\end{lem}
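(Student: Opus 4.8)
The plan is to prove that if $r\operatorname{reg}(I) \le \alpha(I^{(m)})$, then $I^{(m)}\subseteq I^r$, by reducing the containment to a degree-by-degree comparison that exploits the good generation/regularity properties of $I^r$. Since $I$ defines a $0$-dimensional subscheme, $I^r$ does too, and the key classical fact is that the $r$-th power of an ideal generated in degree $\le \operatorname{reg}(I)$ has its saturation generated in degrees $\le r\operatorname{reg}(I)$; more precisely, one knows (e.g.\ from the theory of Castelnuovo--Mumford regularity, via \cite{refGGP}) that $\operatorname{satdeg}(I^r)\le r\operatorname{reg}(I)$, so that $(I^r)_t = (\operatorname{sat}(I^r))_t$ for all $t\ge r\operatorname{reg}(I)$.

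First I would note that $I^{(m)}$ and $I^r$ have the same radical, hence $\operatorname{sat}(I^r)\subseteq \operatorname{sat}(I^{(m)}) = I^{(m)}$ is \emph{false} in general; instead the correct inclusion to use is $I^{(m)}\subseteq \operatorname{sat}(I^r)$, which holds because $\operatorname{sat}(I^r)$ is exactly the saturated ideal of the fat point (or more generally the $0$-dimensional) scheme $r\cdot Z$ where $Z$ is defined by $I$, and the scheme defined by $I^{(m)}$ contains $r\cdot Z$ as a subscheme at every associated point once $m\ge r$ — and if $m < r$ then $\alpha(I^{(m)}) \le \alpha(I^{(r)}) \le r\,\alpha(I) \le r\operatorname{reg}(I)$ forces $\alpha(I^{(m)}) = r\operatorname{reg}(I)$ only in degenerate cases, so one checks the hypothesis already handles this. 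The cleaner route: since $\operatorname{sat}(I^r)$ is the largest ideal agreeing with $I^r$ in high degrees and $I^{(m)}$ is saturated, one shows directly that every form in $I^{(m)}$ lies in $\operatorname{sat}(I^r)$, and then that in degrees $\ge r\operatorname{reg}(I)$ the ideals $I^r$ and $\operatorname{sat}(I^r)$ coincide.

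So the core argument runs: take $0\ne f\in I^{(m)}$ homogeneous of degree $d$. By hypothesis $d\ge \alpha(I^{(m)})\ge r\operatorname{reg}(I)\ge \operatorname{satdeg}(I^r)$, using the inequality $\operatorname{reg}(I^r)\le r\operatorname{reg}(I)$ (which follows from subadditivity of regularity, valid here because $I$ defines a scheme of dimension $0$, whence $\operatorname{reg}(I^r)\le r\operatorname{reg}(I)$ by the standard argument of Geramita--Gimigliano--Pitteloud or by \cite{refGGP}-type results) together with $\operatorname{satdeg}(I^r)\le\operatorname{reg}(I^r)$. Therefore $(\operatorname{sat}(I^r))_d=(I^r)_d$. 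On the other hand $f\in I^{(m)}$ vanishes to order $\ge m\,m_i$ at each fat point $P_i$, and since $m\ge r$ (the only case where there is something to prove — if $m<r$ the hypothesis forces $\alpha(I^{(m)})\ge r\operatorname{reg}(I) > r\alpha(I)\ge \alpha(I^{(m)})$, a contradiction unless the scheme is empty), $f$ vanishes to order $\ge r\,m_i$ at each $P_i$, so $f\in I^{(r)}\subseteq \operatorname{sat}(I^r)$. Combining, $f\in (\operatorname{sat}(I^r))_d=(I^r)_d$, so $f\in I^r$; since $I^{(m)}$ is generated by such $f$, we get $I^{(m)}\subseteq I^r$.

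The main obstacle I anticipate is justifying cleanly the two regularity facts for the (possibly non-saturated) ideal $I$: namely $\operatorname{reg}(I^r)\le r\operatorname{reg}(I)$ and $(I^r)_t=(\operatorname{sat}(I^r))_t$ for $t\ge r\operatorname{reg}(I)$. For saturated $I$ defining points these are classical, but the lemma allows $I$ non-saturated, so one must either pass to $\operatorname{sat}(I)$ (noting $\alpha$, $\operatorname{reg}$, symbolic powers, and the containment question are insensitive to saturation in the relevant degrees) or invoke the general subadditivity $\operatorname{reg}(I^r)\le r\operatorname{reg}(I)$ for ideals defining $0$-dimensional schemes. I would handle this by first replacing $I$ with its saturation, observing that $\operatorname{reg}$, $\alpha$, and the symbolic powers $I^{(m)}$ are unchanged, and that $(I^r)_t=((\operatorname{sat} I)^r)_t$ for $t$ large — indeed for $t\ge r\operatorname{reg}(I)$ — so the containment in degree $\ge r\operatorname{reg}(I)$ reduces to the saturated case, where the Castelnuovo--Mumford regularity bound gives the result. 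Everything else is a short degree count.
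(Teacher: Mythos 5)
Your argument is, in outline, the standard proof (which is what \cite{refBH2} does): bound $\operatorname{satdeg}(I^r)$ above by $\operatorname{reg}(I^r)\le r\operatorname{reg}(I)$ via \cite{refGGP}, note that in degrees $\ge r\operatorname{reg}(I)$ the ordinary power $I^r$ agrees with its saturation, and observe that every nonzero homogeneous element of $I^{(m)}$ has degree $\ge \alpha(I^{(m)})\ge r\operatorname{reg}(I)$ and already lies in $I^{(r)}\subseteq\operatorname{sat}(I^r)$ because the hypothesis forces $m\ge r$. That is correct and is the intended route.

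Two inaccuracies are worth repairing. First, your chain showing that $m<r$ is impossible misplaces the strict inequality: $r\operatorname{reg}(I)>r\alpha(I)$ can fail, since $\operatorname{reg}(I)=\alpha(I)$ happens (e.g.\ for a complete intersection or a single point). The strictness has to come from $m<r$, not from $\operatorname{reg}$ versus $\alpha$. The correct chain is
\[
\alpha(I^{(m)})\ \ge\ r\operatorname{reg}(I)\ \ge\ r\alpha(I)\ >\ m\alpha(I)\ \ge\ \alpha(I^{(m)}),
\]
where the strict step uses $m<r$ and $\alpha(I)\ge 1$, and the last step uses $I^{m}\subseteq I^{(m)}$. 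Second, the suggested alternative of first replacing $I$ by $\operatorname{sat}(I)$ is not as harmless as you state: $\alpha$ and $\operatorname{reg}$ are in general \emph{not} preserved by saturation (and the hypothesis of the lemma refers to $\operatorname{reg}(I)$, not $\operatorname{reg}(\operatorname{sat}(I))$), and when $M$ is an associated prime of $I$, the paper's definition gives $I^{(m)}=I^m$, so the symbolic powers change too. Fortunately your main argument does not rely on this reduction; it works directly with $I$, using the inequality $\operatorname{reg}(I^r)\le r\operatorname{reg}(I)$ of \cite{refGGP}, which is available for (possibly non-saturated) ideals $I$ with $\dim R/I\le 1$, together with $I^{(m)}\subseteq I^{(r)}\subseteq \operatorname{sat}(I^r)$ for $m\ge r$. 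One last cosmetic point: the lemma allows arbitrary homogeneous $I$ defining a zero-dimensional subscheme, not only fat-point ideals, so the ``vanishing to order $mm_i$'' phrasing should be replaced by the scheme-theoretic statement $I^{(m)}\subseteq I^{(r)}$, which follows from $I^m\subseteq I^r$ by localizing at the minimal primes of $I$.
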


We also recall one of the main results of \cite{refHoHu}. 
The containment $I^{(Nt)}\subseteq I^t$ is the special case
for which $m=1$.

\begin{thm}\label{HoHuThm}
Let $I\subseteq K[\pr{N}]$ be a homogeneous ideal. Then
$I^{(t(m+N-1))}\subseteq (I^{(m)})^t$ holds for all $m,t\geq1$.
\end{thm}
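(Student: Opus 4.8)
The key realization is that this is essentially the Ein--Lazarsfeld--Smith / Hochster--Huneke uniform symbolic containment theorem applied not to $I$ itself but to its symbolic power $I^{(m)}$. So the plan is to reduce the statement to the already-available result that symbolic powers behave well under the $N$-th power, suitably interpreted. More precisely, the first step is to recall the comparison of symbolic and ordinary powers: for a radical ideal (or, in the general homogeneous case, working prime-by-prime at the associated primes of $I$), one has the multiplier-ideal or tight-closure inequality showing that the symbolic power $J^{(k)}$ is contained in $J^{\lceil k/N\rceil}$ in the relevant local rings; I would invoke the version of this that allows the ``base'' ideal to already be a symbolic power.

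The cleanest route is to pass to the associated graded / localized picture at each minimal prime. Let $P$ be a minimal prime of $I$, of height $h\le N$. Localizing at $P$, the ideal $I_P$ has a primary component, and $I^{(m)}$ localizes to $(I_P)^{(m)}$ in the regular local ring $R_P$. The Ein--Lazarsfeld--Smith--Hochster--Huneke machinery, in the form used in \cite{refELS}, \cite{refHoHu}, gives for any ideal $J$ in a regular ring that $J^{(a+b(h-1))}\subseteq (J^{(b)})^{a}$ — this is in fact exactly the statement of Theorem \ref{HoHuThm} in the local setting (with $h$ in place of $N$), and it is proved there; the global statement for homogeneous ideals follows by intersecting over the associated primes, since $h\le N$ makes the exponent $t(m+h-1)\le t(m+N-1)$, and symbolic powers only get larger when the exponent is smaller. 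So the main content is already packaged in the cited sources, and the proof here is a matter of organizing that reduction.

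Concretely the steps are: (1) reduce to the case where $I$ is unmixed by replacing $I$ with the intersection of its minimal primary components — observe $I^{(k)}$ depends only on these, so nothing is lost; (2) at each minimal prime $P$ of height $h\le N$, apply the local uniform containment $(I_P)^{(t(m+h-1))}\subseteq ((I_P)^{(m)})^{t}$ coming from \cite{refELS} or \cite{refHoHu}; (3) note $t(m+N-1)\ge t(m+h-1)$ so $I^{(t(m+N-1))}_P\subseteq I^{(t(m+h-1))}_P$, and intersect the resulting containments over all minimal primes $P$ to conclude $I^{(t(m+N-1))}\subseteq (I^{(m)})^{t}$, using that $(I^{(m)})^t$ can be checked by localizing at the (finitely many) minimal primes of $I^{(m)}$, which are the same as those of $I$.

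The main obstacle is the bookkeeping in step (3): one must be careful that $(I^{(m)})^t$ — an \emph{ordinary} power of a symbolic power — is correctly detected by these local conditions, i.e., that a global element lying in $((I^{(m)})^t)_P$ for every minimal $P$ of $I$ actually lies in $(I^{(m)})^t$. For ideals of fat points (the case of primary interest) this is immediate since the relevant ideals are Cohen--Macaulay of the expected form and localization at the finitely many point-primes suffices; in the general homogeneous case one should instead phrase the argument entirely in terms of the primary decompositions recalled in the Introduction, noting that both sides have the same associated primes after discarding embedded ones and that the inclusion holds primary-component-wise. I expect no genuinely new ideas are needed beyond the cited uniform containment results; the theorem is a clean corollary of \cite{refHoHu} obtained by feeding $I^{(m)}$ into their argument in place of $I$.
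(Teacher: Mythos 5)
The paper gives no proof of this statement at all: it is recalled verbatim as ``one of the main results of \cite{refHoHu}.'' With $m=k+1$ and $t=n$, the statement is precisely the Hochster--Huneke containment $I^{(hn+kn)}\subseteq (I^{(k+1)})^n$, valid in any regular ring containing a field for any ideal of big height $h$, specialized to $K[\pr N]$: a homogeneous ideal defining a nonempty subscheme of $\pr N$ has big height at most $N$, and since increasing the symbolic exponent only shrinks the symbolic power one may replace $h$ by $N$; the degenerate cases where $M$ is associated are trivial because then symbolic powers coincide with ordinary powers under the paper's definition. No localization or reassembly over minimal primes is required --- Hochster--Huneke's theorem is stated globally.

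Your proposal, besides being more elaborate than necessary, contains two substantive errors. First, the formula you attribute to \cite{refHoHu}, namely $J^{(a+b(h-1))}\subseteq (J^{(b)})^{a}$, is not the correct exponent and is in fact false: the exponent should be $a(b+h-1)$, not $a+b(h-1)$. These disagree substantially; for instance with $a=2$, $b=1$, $h=N=2$ your formula would assert $J^{(3)}\subseteq J^2$ for every ideal of points in $\pr 2$, precisely the containment now known to fail (see Remark \ref{cntrex}). Second, step (3) has a genuine gap that you acknowledge but do not resolve: one cannot certify membership in $(I^{(m)})^t$ by localizing only at minimal primes of $I$, because $(I^{(m)})^t$ typically has embedded associated primes --- for fat-points ideals the irrelevant ideal $M$ is often an embedded prime of $(I^{(m)})^t$, which is exactly why $(I^{(m)})^t\subsetneq I^{(mt)}$ in general. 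Your remark that for fat-points ideals this is immediate because the relevant ideals are Cohen--Macaulay is incorrect: $(I^{(m)})^t$ is not a fat-points ideal and need not be Cohen--Macaulay or even saturated. None of this machinery is needed, since the theorem is a direct quotation of the cited Hochster--Huneke result.
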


Another useful fact is:

\begin{prop}\label{conj8prop}
If $I\subseteq K[\pr2]$ is a homogeneous ideal such that $I^{(j+1)}\subseteq MI^{(j)}$ 
and $I^{(2j)}=(I^{(2)})^j$ for all $j\geq1$, then $I^{(t(m+1))}\subseteq M^t(I^{(m)})^t$
holds for all $t\geq1$ and all $m\geq1$ and
$I^{(t(m+1)-1)}\subseteq M^{t-1}(I^{(m)})^t$ holds for all $t\geq1$ and all even $m\geq 2$.
If, moreover, $I^{(2j+1)}=(I^{(2)})^jI$ holds for all $j\geq0$, then
$I^{(t(m+1)-1)}\subseteq M^{t-1}(I^{(m)})^t$ holds for all $t\geq1$ and all odd $m\geq 1$.
\end{prop}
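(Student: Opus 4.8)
The plan is to deduce each containment from the hypotheses by a telescoping argument: repeatedly peel off factors of $M$ (using $I^{(j+1)}\subseteq MI^{(j)}$ applied at the right indices) and then use the powering identities $I^{(2j)}=(I^{(2)})^j$, $I^{(2j+1)}=(I^{(2)})^jI$ to recognize the result as a product of symbolic second powers and copies of $I$, which in turn is $(I^{(m)})^t$ up to passing to the symbolic powers that Theorem \ref{HoHuThm} guarantees. First I would establish the ``doubling'' statement $I^{(t(m+1))}\subseteq M^t(I^{(m)})^t$. By Theorem \ref{HoHuThm} with the roles played by $m$ and $t$, we have $I^{(t(m+1))}=I^{(t(m+N-1))}\subseteq (I^{(m)})^t$ when $N=2$; the task is to gain the extra factor $M^t$. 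I would do this by iterating the containment $I^{(j+1)}\subseteq MI^{(j)}$: starting from $I^{(t(m+1))}$ and stepping the exponent down by one $t$ times, I pick up $M^t$ and land in $M^t I^{(t(m+1)-t)}=M^t I^{(tm)}$; then I split $tm$ as a sum of $t$ copies of $m$ and invoke Theorem \ref{HoHuThm} (or directly the identities, as below) to conclude $I^{(tm)}\subseteq (I^{(m)})^t$. Care is needed because peeling off $M$'s and applying Theorem \ref{HoHuThm} must be combined so the exponents match; I expect the cleanest route is to first write $t(m+1)-1 \ge tm$ and interpolate, tracking at each stage both the power of $M$ accumulated and the symbolic exponent remaining.

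For the ``odd-shifted'' statements the idea is the same but the bookkeeping is more delicate because we only gain $M^{t-1}$, not $M^t$. For even $m=2k$, I would start from $I^{(t(m+1)-1)}=I^{(2kt+t-1)}$ and peel off $t-1$ factors of $M$ to reach $M^{t-1}I^{(2kt)}=M^{t-1}I^{(2(kt))}$; by the identity $I^{(2j)}=(I^{(2)})^j$ applied with $j=kt$ this equals $M^{t-1}(I^{(2)})^{kt}=M^{t-1}\big((I^{(2)})^k\big)^t=M^{t-1}(I^{(2k)})^t=M^{t-1}(I^{(m)})^t$, where in the middle I used the identity again with $j=k$. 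For odd $m=2k+1$, starting from $I^{(t(m+1)-1)}=I^{((2k+2)t-1)}=I^{(2(k+1)t-1)}=I^{(2((k+1)t)-1)}$, the odd identity $I^{(2j+1)}=(I^{(2)})^jI$ with $j=(k+1)t-1$ gives $I^{((2(k+1)t-1)}=(I^{(2)})^{(k+1)t-1}I$; I would then peel off $t-1$ factors of $M$ from the original symbolic power (equivalently, absorb them before applying the identity) and rewrite $(I^{(2)})^{(k+1)t-1}I$ as $\big((I^{(2)})^{k}I\big)^t$ times a correction, using that $(k+1)t-1 = kt + (t-1)$ and that $(I^{(2)})^kI = I^{(2k+1)}=I^{(m)}$. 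The algebra should reassemble to $M^{t-1}(I^{(m)})^t$; the delicate point is that $(I^{(2)})^{(k+1)t-1}I$ is literally $(I^{(2)})^{kt}(I^{(2)})^{t-1}I$, and one must check $(I^{(2)})^{t-1}I \subseteq I^t$ (immediate, since $I^{(2)}\subseteq I$) so that, combined with $(I^{(2)})^{kt}=(I^{(2k)})^{t}\cdot$(nothing) $=\big((I^{(2)})^{k}\big)^t$, the whole product lies in $(I^{(m)})^t$ after supplying the needed copies of $I$.

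The main obstacle I anticipate is precisely this last reassembly in the odd case: making the exponent arithmetic line up so that the product of symbolic second powers and stray copies of $I$ is genuinely contained in $(I^{(m)})^t$ with exactly $M^{t-1}$ in front and no more. The even case is essentially formal once the identities are in hand, and the doubling case reduces to Theorem \ref{HoHuThm} plus a clean $t$-fold application of $I^{(j+1)}\subseteq MI^{(j)}$. The odd case requires one to be careful that peeling $M$'s and applying the odd powering identity $I^{(2j+1)}=(I^{(2)})^jI$ at the shifted index are compatible; I would handle this by performing all the $M$-peeling first, reducing to a clean symbolic exponent of the form $2\cdot(\text{integer})$ or $2\cdot(\text{integer})-1$, and only then invoking the relevant identity, so that the product structure $(I^{(m)})^t$ emerges transparently from $(I^{(2)})^k$ or $(I^{(2)})^kI$ raised to the $t$th power.
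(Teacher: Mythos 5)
Your overall structure is right for the \emph{even} $m$ cases, and there your argument matches the paper's: peel off $M^t$ (resp.\ $M^{t-1}$) to land in $M^t I^{(tm)}$ (resp.\ $M^{t-1}I^{(tm)}$), then use the hypothesis $I^{(2j)}=(I^{(2)})^j$ to identify $I^{(tm)}=(I^{(m)})^t$. But there are two genuine gaps in your handling of odd $m$.

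First, for the containment $I^{(t(m+1))}\subseteq M^t(I^{(m)})^t$ with $m$ odd, your plan of peeling $M^t$ first lands you at $M^t I^{(tm)}$, and you then assert $I^{(tm)}\subseteq (I^{(m)})^t$. This is not what Theorem \ref{HoHuThm} gives (that theorem gives $I^{(t(m+1))}\subseteq (I^{(m)})^t$, a larger symbolic exponent), and it is not implied by the powering identities either: writing $m=2\mu+1$ and unwinding the identities, $I^{(tm)}\subseteq (I^{(m)})^t$ reduces to $I^{(2)}\subseteq I^2$, which is not a hypothesis of the proposition and is in general false. The paper avoids this by noting that $m+1$ is even, so $I^{(t(m+1))}=(I^{(m+1)})^t$, and then peeling one factor of $M$ from \emph{each} of the $t$ factors: $(I^{(m+1)})^t\subseteq (MI^{(m)})^t=M^t(I^{(m)})^t$. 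You should use $m+1$, not $m$.

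Second, for $I^{(t(m+1)-1)}\subseteq M^{t-1}(I^{(m)})^t$ with $m=2k+1$ odd, your reassembly
\[
(I^{(2)})^{(k+1)t-1}I=(I^{(2)})^{kt}(I^{(2)})^{t-1}I\subseteq \bigl((I^{(2)})^k\bigr)^t I^t=(I^{(m)})^t
\]
is correct but only proves the containment \emph{without} the factor $M^{t-1}$; the step $(I^{(2)})^{t-1}I\subseteq I^t$ throws away exactly the information you need. Peeling $M^{t-1}$ first instead lands you at $M^{t-1}I^{(tm)}$ and runs into the same obstruction as above. The missing idea is to apply the \emph{already-proved} first containment with $m=1$ and $t-1$ in place of $t$: since $(I^{(2)})^{t-1}=I^{(2(t-1))}$ by hypothesis, you get $I^{(2(t-1))}\subseteq M^{t-1}I^{t-1}$, which strengthens $(I^{(2)})^{t-1}I\subseteq I^t$ to $(I^{(2)})^{t-1}I\subseteq M^{t-1}I^t$ and thus produces the required $M^{t-1}$. (The paper implements this by splitting further on the parity of $t$ so the exponents line up cleanly with the powering identities, but the essential point is this self-application of the proposition's first part.) Without this step, your argument for odd $m$ does not close.
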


\begin{proof}
Since $I^{(j+1)}\subseteq MI^{(j)}$, we have $I^{(j+i)}\subseteq M^iI^{(j)}$ for all
$i\geq1$ and all $j\geq1$. And if $j=ab$, then $I^{(2j)}=(I^{(2)})^j=(I^{(2)})^{ab}=((I^{(2)})^a)^b=(I^{(2a)})^b$,
so whenever $m$ is even we have $I^{(tm)}=(I^{(m)})^t$.

First assume $m$ is even. Then $I^{(tm)}=(I^{(m)})^t$ and hence
$I^{(t(m+1))}\subseteq M^tI^{(tm)}=M^t(I^{(m)})^t$, and also 
$I^{(t(m+1)-1)}=I^{(tm+t-1)}\subseteq M^{t-1}I^{(tm)}=M^{t-1}(I^{(m)})^t$.

Now assume $m$ is odd. Then $I^{(t(m+1))}=(I^{(m+1)})^t$.
But $I^{(m+1)}\subseteq MI^{(m)}$, so $I^{(t(m+1))}=(I^{(m+1)})^t\subseteq (MI^{(m)})^t=M^t(I^{(m)})^t$.
Finally assume in addition that $I^{(2j+1)}=(I^{(2)})^jI$ holds for $j\geq0$ and write $m=2\mu+1$.
If $t=2\tau$ is even, then we have 
$$I^{(t(m+1)-1)}=I^{(4\mu\tau+4\tau-1)}=I^{(4\mu\tau+4(\tau-1)+2+1)}=(I^{(2)})^{2\mu\tau+2(\tau-1)+1}I
=I^{(2t\mu)}I^{(2(t-1))}I.$$
But $I^{(2(t-1))}\subseteq M^{t-1}I^{t-1}$ (applying the case already proved with $m=1$), so we have
$I^{(t(m+1)-1)}\subseteq I^{(2t\mu)}M^{t-1}I^{t-1}I=M^{t-1}I^tI^{(2t\mu)}=M^{t-1}(I^{(2\mu)}I)^t=M^{t-1}(I^{(m)})^t$.
If, on the other hand, $t=2\tau+1$ is odd, then 
$I^{(t(m+1)-1)}=(I^{(2)})^{2\mu\tau+2\tau+\mu}I=(I^{(2\mu)})^tI^{(2(t-1))}I\subseteq (I^{(2\mu)})^tM^{t-1}I^{t-1}I
=M^{t-1}(I^{(m)})^t.$
\end{proof}

The next result is a special case of Proposition 4.2.3 of \cite{refBo}.

\begin{lem}\label{EulerLemma}
Assume $K$ has characteristic 0. Let $I\subseteq K[\pr N]=K[x_0,\ldots,x_N]$ be the radical
ideal of a finite set of points $P_1,\ldots,P_n$.
Then $I^{(j+1)}\subseteq MI^{(j)}$ for each $j\geq1$.
\end{lem}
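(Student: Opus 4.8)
The plan is to combine Euler's identity with the elementary fact that a first-order partial derivative drops the order of vanishing at a point by at most one; characteristic $0$ will be used only to divide by a degree at the very end. Since $I^{(j+1)}$ and $MI^{(j)}$ are homogeneous, it suffices to show that an arbitrary form $F\in I^{(j+1)}$ lies in $MI^{(j)}$. Because $I^{(j+1)}=\bigcap_i I(P_i)^{j+1}\subseteq\bigcap_i I(P_i)=I$, such an $F$ lies in the proper ideal $I$ and hence is not a nonzero constant; if $F=0$ we are done, so we may assume $d:=\deg F\ge 1$.

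First I would show that $\partial F/\partial x_k\in I^{(j)}$ for each $k=0,\dots,N$. Fixing a point $P=P_i$, we have $F\in I(P)^{j+1}$, so we may write $F$ as a finite sum of products $g_1\cdots g_{j+1}$ with each $g_\ell\in I(P)$; applying the Leibniz rule, every summand of $\partial F/\partial x_k$ still contains at least $j$ of the factors $g_\ell$ and therefore lies in $I(P)^{j}$. Hence $\partial F/\partial x_k\in I(P_i)^{j}$ for every $i$, that is, $\partial F/\partial x_k\in\bigcap_i I(P_i)^{j}=I^{(j)}$. Now invoke Euler's identity $d\cdot F=\sum_{k=0}^N x_k\,\partial F/\partial x_k$: since $\operatorname{char}K=0$ and $d\ge 1$ we may divide by $d$ to get $F=\tfrac{1}{d}\sum_{k=0}^N x_k\,(\partial F/\partial x_k)$, which exhibits $F$ as an element of $MI^{(j)}$, because $x_k\in M$ and $\partial F/\partial x_k\in I^{(j)}$ for every $k$. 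This proves $I^{(j+1)}\subseteq MI^{(j)}$.

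I do not anticipate any serious difficulty; the computation is routine and the only step that genuinely requires characteristic $0$ is the division by $d$ in Euler's identity. This hypothesis cannot be dropped: over a field of characteristic $p$ a nonzero form of degree divisible by $p$ can have all of its first partials identically zero, so the argument collapses, consistent with known failures of this type of containment in positive characteristic. If one prefers not to use the description $I^{(m)}=\bigcap_i I(P_i)^{m}$ as a black box, the first step can instead be carried out in an affine chart containing $P_i$, where $I(P_i)$ becomes the maximal ideal of a localization of a polynomial ring, the dehomogenization of $F$ lies in its $(j+1)$st power, and differentiating a monomial of local degree $\ge j+1$ visibly lands in the $j$th power.
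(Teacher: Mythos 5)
Your proof is correct and takes essentially the same approach as the paper: both arguments establish that $\partial F/\partial x_k\in I^{(j)}$ for every $k$ (derivatives drop multiplicity by at most one) and then invoke Euler's identity, with characteristic $0$ needed to divide by $\deg F$. The only cosmetic difference is in proving the derivative step: you expand $F$ as sums of products of $j+1$ elements of $I(P_i)$ and apply the Leibniz rule, while the paper changes coordinates so that $I(P_i)=(x_1,\dots,x_N)$ and reads off the drop in degree from the monomials, then notes that partials transform linearly under linear coordinate changes; both routes are equally valid.
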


\begin{proof}
Let $F\in I^{(j+1)}$ be homogeneous.
Since $F\in (I(P_i))^{j+1}$ for each $i$, fixing $i$ and taking coordinates $x_0,\ldots,x_N$
such that $x_\ell$ vanishes at $P_i$ for $\ell>0$,
$F$ is a sum of monomials in $x_0,\ldots,x_N$ of degree at least $j+1$
in the variables $x_1,\ldots,x_N$. Thus for each $0\leq \ell\leq N$, the degree
in the variables $x_1,\ldots,x_N$ of each term of $\partial F/\partial x_\ell$ is 
at least $j$, hence $\partial F/\partial x_\ell\in (I(P_i))^{j}$. 
Since the partials with respect to one set of coordinates are linear
combinations of the partials with respect to any other linear change of coordinates,
we see for any choice of coordinates $x_0,\ldots,x_N$ on $\pr N$ that
$\partial F/\partial x_\ell\in I(P_i)^j$ for each $\ell$ and $i$, hence
$\partial F/\partial x_\ell\in I^{(j)}$ for each $\ell$.

Applying Euler's identity for a homogeneous polynomial $G$ of
positive degree (that $\deg(G)G=\sum_\ell x_\ell\partial G/\partial x_\ell$)
we see $F$ is contained in $MI^{(j)}$.
\end{proof}

This raises the following question:

\begin{ques}
Let $I\subsetneq K[\pr N]$ be any proper homogeneous ideal where $K$ has arbitrary characteristic.
Is it true that $I^{(j+1)}\subseteq MI^{(j)}$ for each $j\geq1$?
\end{ques}

The following result can be useful in some cases; it is a variation of
\cite[Proposition 2.3]{refHaHu}. 

\begin{lem}\label{genlemma} Let $I\subset K[\pr N]$ be a homogeneous ideal defining a 
zero dimensional subscheme of $\pr N$. 
\begin{itemize}
\item[(a)] If $\alpha(I^{(t(m+N-1))})\geq t\operatorname{reg}(I^{(m)})+t(N-1)$, then
$I^{(t(m+N-1))}\subseteq M^{t(N-1)}(I^{(m)})^t$.
\item[(b)] If $I^{(t(m+N-1)-(N-1))}\subseteq (I^{(m)})^t$ and
$\alpha(I^{(t(m+N-1)-(N-1))})\geq t\operatorname{reg}(I^{(m)})+(t-1)(N-1)$, then
$$I^{(t(m+N-1)-(N-1))}\subseteq M^{(t-1)(N-1)}(I^{(m)})^t.$$
\end{itemize}
\end{lem}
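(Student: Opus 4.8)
The plan is to derive both parts from a single elementary observation about an ideal and its powers, which I will call the \emph{degree-shift observation}: if $J\subseteq R$ is a homogeneous ideal generated in degrees at most some integer $\rho$, and $F\in J$ is a nonzero form with $\deg F\geq\rho+k$ for an integer $k\geq 0$, then $F\in M^kJ$. To prove this, fix homogeneous generators $f_1,\ldots,f_s$ of $J$ with $\deg f_i\leq\rho$ and write $F=\sum_i g_if_i$ with the $g_i$ homogeneous; since $F$ is homogeneous, every $i$ with $g_i\neq 0$ has $\deg g_i=\deg F-\deg f_i\geq k$, and a nonzero form of degree at least $k\geq 1$ lies in $M^k$ (the case $k=0$ being trivial since $M^0=R$). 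I apply this with $J=(I^{(m)})^t$ and $\rho=t\operatorname{reg}(I^{(m)})$: indeed $I^{(m)}$ is generated in degrees at most $\operatorname{reg}(I^{(m)})$ (the generator degrees are among the $b_{0j}$, which are bounded by $\operatorname{reg}$), so $(I^{(m)})^t$ is generated by products of $t$ such generators, hence in degrees at most $t\operatorname{reg}(I^{(m)})$.

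For part (a), let $F$ be any nonzero form in $I^{(t(m+N-1))}$. By Theorem \ref{HoHuThm}, applied with the present $m$ and $t$, we have $F\in(I^{(m)})^t$, and by hypothesis $\deg F\geq\alpha(I^{(t(m+N-1))})\geq t\operatorname{reg}(I^{(m)})+t(N-1)$. The degree-shift observation with $k=t(N-1)$ then gives $F\in M^{t(N-1)}(I^{(m)})^t$, and since $F$ was arbitrary this proves $I^{(t(m+N-1))}\subseteq M^{t(N-1)}(I^{(m)})^t$. Part (b) is the same argument with two changes: the containment $F\in(I^{(m)})^t$ is now supplied by the hypothesis $I^{(t(m+N-1)-(N-1))}\subseteq(I^{(m)})^t$ instead of by Theorem \ref{HoHuThm}, and the degree bound reads $\deg F\geq\alpha(I^{(t(m+N-1)-(N-1))})\geq t\operatorname{reg}(I^{(m)})+(t-1)(N-1)$, so the observation is invoked with $k=(t-1)(N-1)$ and yields $F\in M^{(t-1)(N-1)}(I^{(m)})^t$.

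I do not expect any serious obstacle: once Theorem \ref{HoHuThm} (for (a)) or the stated containment (for (b)) puts $F$ inside $(I^{(m)})^t$, everything reduces to the bound on the generator degrees of $(I^{(m)})^t$ coming from $\operatorname{reg}(I^{(m)})$. The only points demanding attention are bookkeeping---matching each hypothesis on $\alpha$ with the correct exponent of $M$---and the degenerate cases in which that exponent is $0$ (namely $N=1$ or $t=1$ in (a), and $t=1$ in (b)), where the conclusion is simply Theorem \ref{HoHuThm} or the assumed containment; one should also be careful to use homogeneity of $F$ so that each summand $g_if_i$ really has degree $\deg F$.
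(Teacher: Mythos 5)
Your proof is correct, and in substance it follows the same strategy as the paper: bound the generation degree of $(I^{(m)})^t$ by $t\operatorname{reg}(I^{(m)})$, use Theorem \ref{HoHuThm} in (a) or the hypothesis in (b) to place $I^{(\cdot)}$ inside $(I^{(m)})^t$, and observe that the hypothesis on $\alpha$ forces every nonzero form of $I^{(\cdot)}$ to sit at least $k$ degrees past the top generator degree, hence in $M^k(I^{(m)})^t$. The one genuine difference is how the generation-degree bound is obtained. The paper quotes $\operatorname{reg}((I^{(m)})^t)\le t\operatorname{reg}(I^{(m)})$ from \cite{refGGP} and extracts the generation-degree consequence, then iterates the surjection $M_1 J_s = J_{s+1}$ (valid for $s$ past the generation degree) to get $J_{s+i}=(M^iJ)_{s+i}$; you instead observe directly that $(I^{(m)})^t$ is generated by $t$-fold products of generators of $I^{(m)}$, so its generation degree is at most $t\operatorname{reg}(I^{(m)})$ with no regularity theorem needed, and your ``write $F=\sum g_if_i$ with each $g_i$ homogeneous of degree $\ge k$'' is the same content as the paper's iterated identity. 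Your route is thus a mild simplification, since it avoids invoking the GGP bound for this lemma (that bound is still used elsewhere in the paper, e.g.\ in Section \ref{stars}), while the paper's citation gives the slightly stronger regularity statement for free. Both arguments are complete; the small bookkeeping points you flag---choosing the $g_i$ homogeneous by taking graded components, and the degenerate cases $k=0$---are handled correctly.
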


\begin{proof}
We know, by \cite[Theorem 1.1]{refGGP}, that
$\operatorname{reg}((I^{(m)})^t)\leq t\operatorname{reg}(I^{(m)})$.
In particular, $(I^{(m)})^t$ is generated in degree at most $t\operatorname{reg}(I^{(m)})$.
Thus for any degree $s\geq t\operatorname{reg}(I^{(m)})$ we have
$M_1((I^{(m)})^t)_s=((I^{(m)})^t)_{s+1}$ and hence
$(M^i)_i((I^{(m)})^t)_s=((I^{(m)})^t)_{s+i}$.
But $(M^i)_i((I^{(m)})^t)_s\subseteq (M^i(I^{(m)})^t)_{s+i}\subseteq ((I^{(m)})^t)_{s+i}$ 
so $(M^i(I^{(m)})^t)_{s+i}=((I^{(m)})^t)_{s+i}$
if $s\geq t\operatorname{reg}(I^{(m)})$. 

(a) By Theorem \ref{HoHuThm} we have $I^{(t(m+N-1))}\subseteq (I^{(m)})^t$.
Thus we have $(I^{(t(m+N-1))})_s\subseteq ((I^{(m)})^t)_s=(M^{t(N-1)}(I^{(m)})^t)_s$
for $s\geq t\operatorname{reg}(I^{(m)})+t(N-1)$. But
$(I^{(t(m+N-1))})_s=0$ for $s<\alpha(I^{(t(m+N-1))})$, so if
$\alpha(I^{(t(m+N-1))})\geq t\operatorname{reg}(I^{(m)})+t(N-1)$, then we have
$(I^{(t(m+N-1))})_s\subseteq ((I^{(m)})^t)_s$ for all $s\geq0$,
which implies the result.

(b) By assumption we have $I^{(t(m+N-1)-(N-1))}\subseteq (I^{(m)})^t$.
Now mimic the proof of (a). We have $(I^{(t(m+N-1)-(N-1))})_s\subseteq ((I^{(m)})^t)_s=(M^{(t-1)(N-1)}(I^{(m)})^t)_s$
for $s\geq t\operatorname{reg}(I^{(m)})+(t-1)(N-1)$. But
$(I^{(t(m+N-1)-(N-1))})_s=0$ for $s<\alpha(I^{(t(m+N-1)-(N-1))})$, so if
$\alpha(I^{(t(m+N-1)-(N-1))})\geq t\operatorname{reg}(I^{(m)})+(t-1)(N-1)$, then we have
$(I^{(t(m+N-1)-(N-1))})_s\subseteq ((I^{(m)})^t)_s$ for all $s\geq0$,
which implies the result.
\end{proof}


\section{The conjectures}\label{theconjectures}

For the reader's convenience, we list here the conjectures we will be considering.

\begin{conj}[{\cite[Conjecture 2.1]{refHaHu}}]\label{fatptconj}
Let $I=\cap_{i=1}^n I(P_i)^{m_i}\subset K[\pr N]$ be any fat points ideal.
Then $I^{(rN)}\subseteq M^{r(N-1)}I^r$
holds for all $r>0$. 
\end{conj}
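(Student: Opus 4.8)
Although Conjecture \ref{fatptconj} is open in general, the following strategy reduces it to a numerical statement about initial degrees, settles it outright when symbolic and ordinary powers agree, and provides the route by which it can be verified configuration by configuration. First note the easy case: a fat points ideal $I\ne(1)$ has no nonzero forms of degree $0$, so $I\subseteq M$ and hence $I^{r(N-1)}\subseteq M^{r(N-1)}$; consequently, whenever $I^{(m)}=I^m$ for all $m$ (e.g.\ for complete intersections),
\[
I^{(rN)}=I^{rN}=I^{r(N-1)}I^{r}\subseteq M^{r(N-1)}I^{r}.
\]
The plan for the general case is to recover as much of this as possible. The main reduction is to Lemma \ref{genlemma}(a) applied with $m=1$, $t=r$: it says that the conjectured containment holds as soon as
\[
\alpha\bigl(I^{(rN)}\bigr)\ \ge\ r\operatorname{reg}(I)+r(N-1)\ =\ r\bigl(\operatorname{reg}(I)+N-1\bigr).
\]
(Internally this uses $I^{(rN)}\subseteq I^{r}$, which is the $m=1$, $t=r$ case of Theorem \ref{HoHuThm}.) Thus the entire problem becomes a Waldschmidt/Chudnovsky-type lower bound for $\alpha(I^{(rN)})$, but with $\operatorname{reg}(I)$ in place of $\alpha(I)$.

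Second, in characteristic $0$ the $M$-adic part of the conclusion comes for free. Iterating Lemma \ref{EulerLemma} — whose proof extends verbatim from reduced to fat point ideals once one observes that each term of a partial derivative of an element of $I(P_i)^{(j+1)m_i}$ lies in $I(P_i)^{jm_i}$ — gives $I^{(a+b)}\subseteq M^{b}I^{(a)}$ for all $a,b\ge1$, and in particular
\[
I^{(rN)}\ \subseteq\ M^{r(N-1)}I^{(r)} .
\]
Combining this with $I^{(rN)}\subseteq I^{r}$ would give the theorem were it not that $I^{(r)}\supseteq I^{r}$, so the two inclusions do not formally merge. This pinpoints where the difficulty lies: the factor $M^{r(N-1)}$ is not the obstacle; the obstacle is upgrading membership in $I^{(r)}$ to membership in $I^{r}$ while keeping that factor, and Lemma \ref{genlemma}(a) says this can be done once every form in $I^{(rN)}$ has degree exceeding the top degree of a generator of $(I^{m})^{r}$, i.e.\ once the displayed inequality holds.

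Third, for the concrete families treated in the later sections one verifies that inequality directly. For points on a smooth conic, for star configurations, for points lying in a hyperplane, and for general points of $\pr2$, both $\alpha(I^{(m)})$ and $\operatorname{reg}(I)$ (equivalently $\sigma(\operatorname{sat}(I))$ together with $\operatorname{satdeg}(I)$) are either known exactly or controlled by the structure of the configuration, so one substitutes these values into Lemma \ref{genlemma}. When $N=2$ there is the alternative of Proposition \ref{conj8prop}: with $m=1$, $t=r$ it yields exactly $I^{(2r)}\subseteq M^{r}I^{r}$, so the conjecture follows for any plane configuration satisfying the hypotheses $I^{(j+1)}\subseteq MI^{(j)}$ (automatic in characteristic $0$ by Lemma \ref{EulerLemma}), $I^{(2j)}=(I^{(2)})^j$ and $I^{(2j+1)}=(I^{(2)})^jI$.

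The main obstacle is the uniform lower bound $\alpha(I^{(rN)})\ge r(\operatorname{reg}(I)+N-1)$. Even the case $r=1$, namely $\alpha(I^{(N)})\ge\operatorname{reg}(I)+N-1$, strengthens the known Waldschmidt--Skoda and Esnault--Viehweg estimates — and the Chudnovsky bound, which is available only for $N=2$ — both by replacing $\alpha(I)$ with $\operatorname{reg}(I)\ge\alpha(I)$ and by being essentially sharp, so this sufficient condition need not hold even when Conjecture \ref{fatptconj} does, and for $N\ge3$ nothing of the needed strength is known; likewise $I^{(2j)}=(I^{(2)})^j$ can fail, so Proposition \ref{conj8prop} does not close even the planar case. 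I therefore expect the honest route to be case by case as above; a general proof would seem to require either a genuinely new lower bound for initial degrees of symbolic powers, or a refinement of the multiplier-ideal and tight-closure arguments of Ein--Lazarsfeld--Smith \cite{refELS} and Hochster--Huneke \cite{refHoHu} that produces the extra factor $M^{r(N-1)}$ intrinsically — which is close to being the content of the conjecture itself, so some new input is needed.
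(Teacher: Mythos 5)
The statement is a \emph{conjecture} borrowed from Harbourne--Huneke, not a theorem: the paper offers no general proof and only verifies it configuration by configuration (symbolic $=$ ordinary in Section \ref{symisord}, odd points on a smooth conic in Section \ref{oddsmoothconic}, star configurations in Section \ref{stars}, points in a hyperplane in Section \ref{HyperplaneSection}, general points in $\pr2$ in Section \ref{general}). Your proposal correctly recognizes this and is essentially an accurate survey of the paper's toolkit rather than a proof, which is the appropriate posture. Specifically: your ``easy case'' $I^{(rN)}=I^{rN}=I^{r(N-1)}I^r\subseteq M^{r(N-1)}I^r$ when $I^{(m)}=I^m$ is verbatim the argument in Section \ref{symisord}; your reduction via Lemma \ref{genlemma}(a) with $m=1$, $t=r$ (resting on Theorem \ref{HoHuThm}) is exactly the mechanism Lemma \ref{genlemma} encodes and the one used in Section \ref{general}; your observation that iterating Lemma \ref{EulerLemma} gives $I^{(rN)}\subseteq M^{r(N-1)}I^{(r)}$ in characteristic $0$, and that the residual difficulty is upgrading $I^{(r)}$ to $I^r$, is a fair diagnosis; and your appeal to Proposition \ref{conj8prop} with its hypotheses $I^{(2j)}=(I^{(2)})^j$, $I^{(2j+1)}=(I^{(2)})^jI$ matches what Sections \ref{oddsmoothconic} and \ref{stars} actually check. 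Your remark that Lemma \ref{EulerLemma} extends from radical to fat point ideals is also correct: for $F\in I(P_i)^{(j+1)m_i}$ one gets $\partial F/\partial x_\ell\in I(P_i)^{(j+1)m_i-1}\subseteq I(P_i)^{jm_i}$ precisely because $m_i\ge1$, and Euler's identity then gives $F\in MI^{(j)}$.

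Two small calibrations. First, the paper's direct verification of Conjecture \ref{fatptconj} for $n\ge5$ odd points on a conic does not go through Lemma \ref{genlemma} at all; it proves $I^{(2)}\subseteq MI$ by a fixed-component/B\'ezout argument in low degrees and a regularity argument in high degrees, and then uses $I^{(2r)}=(I^{(2)})^r$. So the reduction you emphasize is one available route, not the only one the paper uses, and you are right to flag that $\alpha(I^{(rN)})\ge r(\operatorname{reg}(I)+N-1)$ is sufficient but far from necessary. Second, it is worth knowing (cf.\ Remark \ref{cntrex}) that the closely related Conjecture \ref{Essenconj} was refuted while the paper was in review; this does not directly touch Conjecture \ref{fatptconj}, but it supports your caution that a uniform proof will need genuinely new input rather than the existing multiplier-ideal or tight-closure machinery.
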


\begin{conj}[{\cite[Conjecture 8.20]{refB. et al}}]\label{Essenconj}
Let $I\subseteq K[\pr N]$ be a homogeneous ideal.
Then $I^{(rN-(N-1))}\subseteq I^r$ holds for all $r$. 
\end{conj}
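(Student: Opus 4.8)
Conjecture \ref{Essenconj} is open in general; the natural line of attack, and the one this paper develops configuration by configuration, runs as follows. First I would reduce to the case in which $I$ is saturated and defines a $0$-dimensional subscheme of $\pr N$ --- indeed a fat point subscheme. Passing to $\operatorname{sat}(I)$ does not change $I^{(m)}$ in high degrees, and the containment is governed entirely by the associated primes of the powers $I^r$, so (in the spirit of Eisenbud--Mazur and of Question \ref{q}) one expects fat points to be the critical case. For such an $I$ the Postulational Criterion, Lemma \ref{postcrit2}, converts the problem into a single numerical inequality: it suffices to show
\[
r\,\operatorname{reg}(I)\ \le\ \alpha\!\left(I^{(rN-(N-1))}\right).
\]
Thus the whole question is pushed onto two estimates: a lower bound for the initial degree of the symbolic power on the right, and an upper bound for $\operatorname{reg}(I)$.

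For the lower bound I would feed $m = rN-(N-1)$ into the Waldschmidt--Esnault--Viehweg type inequalities (for instance $\alpha(I^{(m)})/m \ge (\alpha(I^{(n)})+1)/(n+N-1)$, or, when $N=2$, Chudnovsky's bound), possibly after first splitting off a piece of the symbolic power via Theorem \ref{HoHuThm} applied to a well-chosen intermediate exponent. What one really wants to control here is the Waldschmidt constant $\lim_{m\to\infty}\alpha(I^{(m)})/m$ of the configuration, since $\alpha(I^{(rN-(N-1))})/r$ tends to $N$ times that constant as $r\to\infty$. For the upper bound one uses that for a $0$-dimensional scheme $\operatorname{reg}(I)$ is the larger of $\operatorname{satdeg}(I)$ and $\sigma(\operatorname{sat}(I))$, which for ``nice'' configurations (complete intersections, general points, points on a low-degree hypersurface, star configurations) can be read off from the geometry. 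When $\operatorname{reg}(I)$ stays close to $\alpha(I)$ and the Waldschmidt constant is as large as the geometry forces it to be, the displayed inequality holds for all $r\ge 1$ and Lemma \ref{postcrit2} finishes the job; one can then promote the bare containment to the $M$-adic refinements by invoking $I^{(j+1)}\subseteq MI^{(j)}$ (Lemma \ref{EulerLemma}, in characteristic $0$) exactly as in Proposition \ref{conj8prop} and Lemma \ref{genlemma}.

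The main obstacle is that this reduction is far from automatic: the Postulational Criterion is too crude precisely when $\operatorname{reg}(I)$ is much larger than $\alpha(I)$ --- e.g. for many collinear points, for points lying on a conic or other low-degree curve, or for star configurations --- because then $r\,\operatorname{reg}(I)$ can exceed $\alpha(I^{(rN-(N-1))})$ even though the containment still holds. For those configurations there seems to be no alternative to computing, or very tightly bounding, both $\alpha(I^{(m)})$ and the minimal free resolution (hence the regularity) of $I$ directly from the combinatorics of the point set, which is why no uniform argument is available and the paper is organized as a sequence of special cases. A secondary obstacle is that even the auxiliary input $I^{(j+1)}\subseteq MI^{(j)}$ rests on Euler's identity and so is only known in characteristic $0$, so the toolkit itself changes when $\operatorname{char} K > 0$; I would expect any fully general proof to require a genuinely new idea for bounding $\alpha(I^{(m)})$ from below, not merely a sharpening of the postulational bookkeeping above.
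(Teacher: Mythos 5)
You correctly read this as a conjecture rather than a theorem, and you correctly decline to supply a general proof; indeed the paper never offers one, only case-by-case verifications. Your reduction-to-the-Postulational-Criterion sketch does match one of the tools the paper actually uses (Lemma \ref{postcrit2} appears in the three-points-on-a-conic verification of Conjecture \ref{p2conj}, and Lemma \ref{genlemma} is the same idea with an $M$-adic refinement). You are also right that the criterion is too crude whenever $\operatorname{reg}(I)$ is much larger than $\alpha(I)$; this is exactly why the paper proceeds configuration by configuration. For complete intersections (Section \ref{symisord}) the containment is trivial: $I^{(rN-N+1)} = I^{rN-N+1}\subseteq I^r$. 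For odd numbers of points on a smooth conic (Section \ref{oddsmoothconic}) the key input is structural, namely $I^{(2m+1)}=(I^{(2)})^mI$ together with $I^{(2)}\subseteq MI$, giving $I^{(2m+1)}\subseteq M^mI^{m+1}\subseteq I^{m+1}$. For star configurations the paper cites \cite[Example 8.4.8]{refB. et al}, and for points in a hyperplane it bootstraps via Proposition \ref{hypprop}(b) from the lower-dimensional case, using the decomposition $\widehat{I}^{(m)}=(x_0^m)+x_0^{m-1}\widetilde{I}+\cdots+\widetilde{I}^{(m)}$ from \cite{refFHL}. None of these is purely postulational, so your instinct that ``no uniform argument is available'' is well aligned with what the paper does.

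However, there is one substantive omission: you describe the conjecture as ``open in general,'' but the paper itself (Remark \ref{cntrex}) records that it is \emph{false} as stated. Dumnicki--Szemberg--Tutaj-Gasi\'nska produced a configuration of $12$ points in $\pr{2}_{\mathbb C}$ (and an analogous one given by $12$ of the $13$ points of $\pr{2}$ over $\mathbf{Z}/3\mathbf{Z}$) for which the product $F$ of the nine defining lines lies in $I^{(3)}$ but not in $I^2$, so $I^{(3)}\not\subseteq I^2$ with $N=r=2$. This simultaneously disproves Conjectures \ref{EvoEssenconj} and \ref{ourconj} in the case $N=r=2$, $m=1$. So the honest framing of the problem is not ``prove the conjecture'' but ``determine the class of ideals (or the range of $r$ relative to $N$) for which it holds'' --- the paper explicitly raises whether the containment holds for $r>N$ and whether failures can be classified by characteristic. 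Your proposal, while strategically sensible for the positive cases, should acknowledge that a general proof is impossible and that the remaining task is delineating the boundary between the configurations where the containment holds and the exceptional ones like the dual Hesse-type example.
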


\begin{conj}[{\cite[Conjecture 4.1.4]{refHaHu}}]\label{p2conj}
Let $I\subseteq K[\pr 2]$ be the radical ideal of a finite set of $n$ points $P_i\in\pr 2$.
Then $I^{(m)}\subseteq I^r$ holds whenever $m/r\ge 2\alpha(I)/(\alpha(I)+1)$. 
\end{conj}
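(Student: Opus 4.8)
Conjecture~\ref{p2conj} asks for the containment $I^{(m)}\subseteq I^r$ at ratios $m/r$ slightly below $2$ (note $2\alpha(I)/(\alpha(I)+1)<2$), thereby improving on the always-valid $I^{(2r)}\subseteq I^r$, which is the $m=1$ case of Theorem~\ref{HoHuThm}. My plan is to combine the Postulational Criterion of Lemma~\ref{postcrit2} with Chudnovsky's lower bound on the initial degree of symbolic powers. By Lemma~\ref{postcrit2}, $I^{(m)}\subseteq I^r$ follows as soon as $r\operatorname{reg}(I)\le\alpha(I^{(m)})$, and since we are in $\pr2$ we may invoke Chudnovsky's inequality $\alpha(I^{(m)})\ge m(\alpha(I)+1)/2$, valid over any field. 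Hence the containment holds whenever $r\operatorname{reg}(I)\le m(\alpha(I)+1)/2$, i.e.\ whenever $m/r\ge 2\operatorname{reg}(I)/(\alpha(I)+1)$. Since $\operatorname{reg}(I)\ge\alpha(I)$ always (the ideal being generated in degrees at most $\operatorname{reg}(I)$), this soft argument already settles Conjecture~\ref{p2conj} in the extreme case $\operatorname{reg}(I)=\alpha(I)$, which genuinely occurs---for instance when $I$ has a linear minimal free resolution. Separately, whenever symbolic and ordinary powers coincide---complete intersections being the model case, treated in Section~\ref{symisord}---the statement is immediate for a different reason, because then $I^{(m)}\subseteq I^r$ is equivalent to $m\ge r$ and $2\alpha(I)/(\alpha(I)+1)\ge 1$.

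What remains is the case in which $\operatorname{reg}(I)$ is strictly larger than $\alpha(I)$ and symbolic powers genuinely differ from ordinary powers; there the estimate $m/r\ge 2\operatorname{reg}(I)/(\alpha(I)+1)$ is too weak and must be sharpened configuration by configuration. The tools I would use are: (i) an exact, or nearly exact, computation of $\alpha(I^{(m)})$ for the family at hand---points on a smooth conic, star configurations, points in a hyperplane, general plane points---substituted back into Lemma~\ref{postcrit2} in place of the Chudnovsky estimate; (ii) the finer Waldschmidt- and Esnault--Viehweg-type inequalities $\alpha(I^{(m)})/m\ge\alpha(I^{(n)})/(n+1)$ and $\alpha(I^{(m)})/m\ge(\alpha(I^{(n)})+1)/(n+1)$, applied with an $n$ for which $\alpha(I^{(n)})$ is known; and (iii) exhibiting forms that witness $I^{(m)}\subseteq I^r$ directly from the geometry of the points (products of lines through subsets, the defining conic, the hyperplane), with Lemma~\ref{EulerLemma} available in characteristic $0$ to control the $M$-adic refinements that appear in the companion conjectures.

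The main obstacle is exactly the gap between $\operatorname{reg}(I)$ and $\alpha(I)$: the Postulational Criterion loses a factor of roughly $\operatorname{reg}(I)/\alpha(I)$, and Chudnovsky's bound, while sharp for the worst configuration, is far from sharp for most, so no single uniform estimate yields the precise ratio $2\alpha(I)/(\alpha(I)+1)$ in general. A uniform proof by these means seems out of reach; instead each family should be settled by producing, for that family, either a configuration-specific strengthening of the Waldschmidt--Chudnovsky bound on $\alpha(I^{(m)})$ or an explicit construction realizing the containment---which is how Sections~\ref{symisord} through~\ref{general} are organized.
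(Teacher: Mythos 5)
Your proposal correctly recognizes that the statement is an open conjecture which the paper verifies only configuration by configuration, and the soft argument you give---Lemma~\ref{postcrit2} combined with Chudnovsky's bound $\alpha(I^{(m)})\geq m(\alpha(I)+1)/2$---is exactly what the paper uses for $n=3$ points on a smooth conic, where $\alpha(I)=\operatorname{reg}(I)=2$ so the Postulational Criterion closes the gap precisely. Your complete-intersection reduction likewise matches Section~\ref{symisord}. The chief tool you omit is the resurgence $\rho(I)=\sup\{m/r : I^{(m)}\not\subseteq I^r\}$: for $n\geq5$ odd points on a smooth conic the paper settles Conjecture~\ref{p2conj} not by sharpening estimates for $\alpha(I^{(m)})$ and feeding them into the Postulational Criterion, but by citing the exact value $\rho(I)=(n+1)/n$ from \cite{refBH} and observing $(n+1)/n < 4/3 = 2\alpha(I)/(\alpha(I)+1)$. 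This is both cleaner and stronger than the postulational route, since the resurgence already encodes the worst-case $m/r$ threshold and sidesteps the $\operatorname{reg}(I)$ versus $\alpha(I)$ gap you correctly identify as the main obstacle. Finally, you should be aware that for star configurations and for $n\geq6$ general points in the plane the paper offers no new argument for Conjecture~\ref{p2conj} at all but simply cites the discussion after Conjecture 4.1.4 of \cite{refHaHu}; so your program of producing fresh family-by-family proofs via refined Waldschmidt--Esnault--Viehweg bounds is more ambitious than what the paper actually carries out for this particular conjecture.
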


\begin{conj}[{\cite[Conjecture 4.1.5]{refHaHu}}]\label{EvoEssenconj}
Let $I\subseteq K[\pr N]$ be the radical ideal of a finite set of $n$ points $P_i\in\pr N$.
Then $I^{(rN-(N-1))}\subseteq M^{(r-1)(N-1)}I^r$ holds for all $r\geq1$. 
\end{conj}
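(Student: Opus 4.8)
The plan is to verify the conjecture family by family rather than in one stroke, since the real content is a fine lower bound on the initial degree of a symbolic power, exactly the quantity the introduction flags as poorly understood. It is useful to read the statement as the combination of two assertions: the ``plain'' containment $I^{(rN-(N-1))}\subseteq I^r$ (this is Conjecture \ref{Essenconj} applied to $I$), together with the assertion that the $(r-1)(N-1)$ ``extra'' degrees of a form in $I^{(rN-(N-1))}$ can be absorbed into the irrelevant ideal $M$.

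The main reduction tool is Lemma \ref{genlemma}(b) with $m=1$ and $t=r$: reading $I^{(1)}=I$, its conclusion is precisely $I^{(rN-(N-1))}\subseteq M^{(r-1)(N-1)}I^r$, while its two hypotheses become, first, $I^{(rN-(N-1))}\subseteq I^r$ and, second, the degree inequality
\[
\alpha\!\left(I^{(rN-(N-1))}\right)\ \geq\ r\operatorname{reg}(I)+(r-1)(N-1).
\]
Since $I$ is the radical ideal of finitely many points it is saturated and defines a $0$-dimensional scheme, so Lemma \ref{postcrit2} applies and the first hypothesis already follows from the weaker bound $r\operatorname{reg}(I)\leq\alpha(I^{(rN-(N-1))})$; hence whenever the displayed inequality holds it alone suffices, and the whole statement reduces to a single numerical check. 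For a fixed family --- star configurations, points on a smooth conic, points lying in a hyperplane, general points of $\pr2$ --- I would therefore compute $\operatorname{reg}(I)=\sigma(I)$ explicitly and bound $\alpha(I^{(m)})$ from below for the relevant $m=rN-(N-1)$, using the Waldschmidt--Skoda estimate $\alpha(I^{(m)})\geq m\,\alpha(I)/N$ as a first approximation and, where the geometry permits it, the exact value of $\alpha(I^{(m)})$.

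Two simplifications are worth isolating. When symbolic and ordinary powers coincide (the setting of Section \ref{symisord}, e.g.\ complete intersections), $I^{(m)}=I^m$ for all $m$, and since $rN-(N-1)=r+(r-1)(N-1)$ and $I\subseteq M$ one gets immediately $I^{(rN-(N-1))}=I^r\cdot I^{(r-1)(N-1)}\subseteq M^{(r-1)(N-1)}I^r$; this is the base case underlying everything else. And for $N=2$ there is a purely algebraic route in characteristic $0$: Lemma \ref{EulerLemma} supplies $I^{(j+1)}\subseteq MI^{(j)}$ for free, so once one can check $I^{(2j)}=(I^{(2)})^j$ and $I^{(2j+1)}=(I^{(2)})^jI$ for all $j$, Proposition \ref{conj8prop} with $m=1$ yields $I^{(2r-1)}\subseteq M^{r-1}I^r$ without touching the degree inequality at all --- the method of choice for plane configurations where the algebra of $I^{(2)}$ is transparent.

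The step I expect to be the genuine obstacle is the degree inequality above (equivalently, producing lower bounds on $\alpha$ of symbolic powers strong enough to feed Lemmas \ref{postcrit2} and \ref{genlemma}): it improves on the Waldschmidt--Skoda bound by an amount linear in $r$, and $\operatorname{reg}(I)$ can be considerably larger than $\alpha(I)$, so the inequality must be extracted from the specific geometry of each configuration rather than from general principles. A secondary difficulty is positive characteristic, where the Euler-identity argument behind Lemma \ref{EulerLemma}, and hence the $N=2$ shortcut, is unavailable, leaving the $\alpha$-estimate together with Lemma \ref{genlemma} as the only tools at hand.
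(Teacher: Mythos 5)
This is a conjecture, not a theorem, and the paper does not (and cannot) prove it in full generality: Remark~\ref{cntrex} records a counterexample with $N=r=2$ (a configuration of $12$ points in $\pr2$). Your proposal is therefore correctly framed as a family-by-family verification strategy, and several of its tools do appear in the paper: the symbolic-equals-ordinary argument is word-for-word what Section~\ref{symisord} does, the pairing of Lemma~\ref{genlemma} with Lemma~\ref{postcrit2} is a legitimate reduction, and the Proposition~\ref{conj8prop}/Lemma~\ref{EulerLemma} shortcut for $N=2$, $\operatorname{char}(K)=0$, is exactly what underlies Corollary~\ref{char0cor} and the star-configuration discussion in Section~\ref{stars}.

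However, the reduction you present as the unifying device --- check $\alpha(I^{(rN-(N-1))})\geq r\operatorname{reg}(I)+(r-1)(N-1)$ and invoke Lemma~\ref{genlemma}(b) with $m=1$, $t=r$ --- does not in fact cover the cases treated in the paper. For $n\geq 5$ odd points on a smooth conic one has $\operatorname{reg}(I)=(n+1)/2$ while $\alpha(I^{(2r-1)})=2(2r-1)$, so the inequality becomes $3r-1\geq r(n+1)/2$, which fails for every $n\geq5$ and $r\geq1$; here $\operatorname{reg}(I)$ is far larger than $\alpha(I)$ and the numerical check is hopeless. The paper instead shows $I^{(2)}\subseteq MI$ by a B\'ezout/fixed-component argument (Section~\ref{oddsmoothconic}) and then applies $I^{(2m+1)}=(I^{(2)})^mI\subseteq(MI)^mI$, which is moreover characteristic-free, whereas your Proposition~\ref{conj8prop} fallback needs $\operatorname{char}(K)=0$. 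A second omission: the paper's main new input to this conjecture is Proposition~\ref{hypprop}(c), a reduction for points lying in a hyperplane of $\pr{N+1}$ that uses the Fatabbi--Harbourne--Lorenzini decomposition $\widehat{I}^{(m)}=(x_0^m)+x_0^{m-1}\widetilde{I}+\cdots+\widetilde{I}^{(m)}$ to push the conjecture up one ambient dimension; this yields Theorems~\ref{hypcor1}--\ref{hypcor2} and cannot be reached by a degree estimate, since $\operatorname{reg}$ of such a set is computed in $\pr{N+1}$ where it far exceeds what the degree inequality can tolerate. Finally, for star configurations and general plane points the paper relies on \cite[Corollaries 4.1.7, 4.1.13]{refHaHu} rather than re-deriving the bound, and you would in any case need exact knowledge of $\alpha(I^{(m)})$ for the numerical check to close (for stars in $\pr2$ it holds with equality, $\alpha(I^{(2r-1)})=rs-1=r(s-1)+(r-1)$, so there is no room to spare).
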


\begin{conj}[{\cite[Conjecture 4.1.8]{refHaHu}}]\label{EvoEssenconj2}
Let $I\subseteq K[\pr N]$ be the radical ideal of a finite set of $n$ points $P_i\in\pr N$.
Then 
$$\alpha(I^{(rN-(N-1))})\ge r\alpha(I)+(r-1)(N-1)$$ 
for every $r>0$.
\end{conj}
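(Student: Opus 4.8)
The plan is as follows. The case $r=1$ is the tautology $\alpha(I^{(1)})\ge\alpha(I)$, so we may assume $r\ge 2$. The single most efficient route is to deduce the inequality from a containment: if $I^{(m)}\subseteq M^{a}J$ for some homogeneous ideal $J$ and some integer $a\ge 0$, then a nonzero homogeneous $F\in M^{a}J$ of degree $d$ can be written $F=\sum g_ih_i$ with $g_i\in M^{a}$, $h_i\in J$ homogeneous and $\deg g_i+\deg h_i=d$, whence $d\ge a+\alpha(J)$; that is, $\alpha(I^{(m)})\ge a+\alpha(J)$. Applied to Conjecture \ref{EvoEssenconj} with $m=rN-(N-1)$, $a=(r-1)(N-1)$, $J=I^{r}$, and using $\alpha(I^{r})=r\alpha(I)$, this yields exactly the inequality of Conjecture \ref{EvoEssenconj2}. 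So whenever the containment $I^{(rN-(N-1))}\subseteq M^{(r-1)(N-1)}I^{r}$ is established for a configuration of points --- for instance in the ``symbolic $=$ ordinary'' / complete intersection case of Section \ref{symisord}, for points on a smooth conic in Section \ref{oddsmoothconic}, for star configurations in Section \ref{stars}, for points in a hyperplane in Section \ref{HyperplaneSection}, for general points in Section \ref{general}, and for the additional characteristic $0$ families of Section \ref{additional} --- Conjecture \ref{EvoEssenconj2} follows immediately, and this is the argument we intend to record in each such case.

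For a verification independent of Conjecture \ref{EvoEssenconj} one instead pins down $\alpha(I^{(rN-(N-1))})$ directly. Whenever a minimal free resolution or the full Hilbert function of the relevant symbolic power is available (complete intersections, star configurations, points in a hyperplane) this is a bookkeeping computation and the stated inequality can be checked on the nose. In $\pr 2$ there is also a softer argument: Chudnovsky's theorem gives $\alpha(I^{(m)})\ge m(\alpha(I)+1)/2$ for all $m$, while Lemma \ref{EulerLemma} gives, in characteristic $0$, $\alpha(I^{(j+1)})\ge\alpha(I^{(j)})+1$; starting from $m_{0}=2(r-1)$ and taking one Euler step gives
$$\alpha(I^{(2r-1)})\ \ge\ (r-1)(\alpha(I)+1)+1\ =\ (r-1)\alpha(I)+r,$$
which already settles Conjecture \ref{EvoEssenconj2} for $N=2$ when $\alpha(I)=1$ (e.g.\ points on a line, where the conjectured bound is an equality). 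For $\alpha(I)\ge 2$ this leaves a deficiency of $\alpha(I)-1$, which must be absorbed using configuration-specific geometry --- typically a B\'ezout-type estimate on the multiplicity with which a curve of controlled degree can pass through the points, or an explicit degeneration.

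The main obstacle is that the conjectured bound is genuinely sharp: dividing by $rN-(N-1)$ and letting $r\to\infty$ it becomes exactly Chudnovsky's (for $N\ge 3$ still open) bound $\alpha(I^{(m)})/m\ge(\alpha(I)+N-1)/N$, and at the finite exponents $m=rN-(N-1)$ a short computation shows the inequality of Conjecture \ref{EvoEssenconj2} exceeds what Chudnovsky's bound delivers by $(N-1)(\alpha(I)-1)/N$, hence is strictly stronger as soon as $\alpha(I)\ge 2$. Consequently none of the general lower bounds in the literature --- Waldschmidt--Skoda, Esnault--Viehweg, the refinement $\alpha(I^{(m)})/m\ge\alpha(I^{(n)})/(n+N-1)$, or the Hochster--Huneke/Ein--Lazarsfeld--Smith containment $I^{(rN)}\subseteq I^{r}$ --- suffices on its own, and we do not attempt the conjecture for arbitrary point sets. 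The ``extra'' summand $(r-1)(N-1)$ is precisely the $M$-power appearing in Conjecture \ref{EvoEssenconj}, so the real work, configuration by configuration, is to manufacture that surplus: either via the containment route of the first paragraph or via an exact computation of $\alpha$ of the symbolic power.
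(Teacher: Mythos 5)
Your first paragraph is exactly the paper's argument: the paper records the implication Conjecture~\ref{EvoEssenconj}~$\Rightarrow$~Conjecture~\ref{EvoEssenconj2} via $\alpha(M^{(r-1)(N-1)}I^r)=(r-1)(N-1)+r\alpha(I)$, and in each section obtains Conjecture~\ref{EvoEssenconj2} by first establishing Conjecture~\ref{EvoEssenconj} (or, in the symbolic-equals-ordinary case, by the equivalent direct degree computation). Your supplementary remarks on Chudnovsky-type bounds and Euler steps are correct but go beyond what the paper records.
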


\begin{conj}[{\cite[Question 4.2.1]{refHaHu}}]\label{refinedChud}
Let $I\subseteq K[\pr N]$ be the radical ideal of a finite set of $n$ points $P_i\in\pr N$.
Then  
$$\frac{\alpha(I^{(m)})+N-1}{m+N-1}\leq \frac{\alpha(I^{(r)})}{r}$$
for all $r>0$.
\end{conj}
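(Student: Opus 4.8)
The plan is to prove Conjecture \ref{refinedChud} in the case where the symbolic and ordinary powers of $I$ coincide, or more precisely where we can bootstrap from the containments proved above. First I would record the trivial-looking but essential reduction: the claimed inequality $\frac{\alpha(I^{(m)})+N-1}{m+N-1}\leq \frac{\alpha(I^{(r)})}{r}$ is equivalent to $r\alpha(I^{(m)})+r(N-1)\leq (m+N-1)\alpha(I^{(r)})$, i.e. to a statement comparing $\alpha$ of a high symbolic power with a multiple of $\alpha$ of a lower one, and it suffices to prove it when $m<r$ (the case $m\geq r$ following from the subadditivity $\alpha(I^{(m)})\leq \lceil m/r\rceil\alpha(I^{(r)})$ type estimates, or directly by symmetry of the normalized Waldschmidt constant argument). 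The natural engine is Theorem \ref{HoHuThm}: $I^{(t(m+N-1))}\subseteq (I^{(m)})^t$, which forces $\alpha(I^{(t(m+N-1))})\geq t\alpha(I^{(m)})$, and then the standard Waldschmidt-constant limit $\gamma(I):=\lim_{s\to\infty}\alpha(I^{(s)})/s$ exists and satisfies $\gamma(I)\geq \alpha(I^{(m)})/(m+N-1)$ for every $m$.

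Next I would set up the main inequality as a consequence of the containment results already in the paper. The key step is to apply Lemma \ref{postcrit2} (the Postulational Criterion) together with Theorem \ref{HoHuThm}: from $I^{(t(m+N-1))}\subseteq(I^{(m)})^t$ we get $\alpha(I^{(t(m+N-1))})\ge t\alpha(I^{(m)})$, and dividing by $t(m+N-1)$ and letting $t\to\infty$ gives $\gamma(I)\ge \alpha(I^{(m)})/(m+N-1)$; combining this with the elementary bound $\alpha(I^{(r)})\ge r\gamma(I)$ (which holds because $\alpha(I^{(rs)})\le s\alpha(I^{(r)})$, hence $\gamma(I)=\lim_s \alpha(I^{(rs)})/(rs)\le \alpha(I^{(r)})/r$) yields
\[
\frac{\alpha(I^{(m)})}{m+N-1}\le \gamma(I)\le \frac{\alpha(I^{(r)})}{r}.
\]
That, however, proves only the weaker inequality $\frac{\alpha(I^{(m)})}{m+N-1}\le \frac{\alpha(I^{(r)})}{r}$, missing the crucial ``$+N-1$'' in the numerator on the left. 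To recover it, I would instead use the sharper containment $I^{(t(m+N-1)-(N-1))}\subseteq (I^{(m)})^t$, which is exactly the content of Conjecture \ref{Essenconj}/\ref{EvoEssenconj}-type statements verified earlier in the paper for the configurations under consideration (e.g. complete intersections, star configurations, points on a conic), giving $\alpha(I^{(t(m+N-1)-(N-1))})\ge t\alpha(I^{(m)})$; substituting $s=t(m+N-1)-(N-1)$ and observing $s/t\to m+N-1$ but with the correction term handled by the explicit $-(N-1)$ offset, one gets $\alpha(I^{(s)})\ge \frac{s+N-1}{m+N-1}\alpha(I^{(m)})$ along the subsequence, hence for all large $s$, hence (by another application of \ref{HoHuThm} to interpolate) for $s=r$. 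Working this through gives precisely $\frac{\alpha(I^{(m)})+N-1}{m+N-1}\le\frac{\alpha(I^{(r)})}{r}$ once one also uses the complementary lower bound $\alpha(I^{(m)})\ge \alpha(I)$ or the Chudnovsky-type estimate $\alpha(I^{(m)})\ge \frac{m+N-1}{N}\alpha(I)$ that is itself one of the conjectures verified in the relevant section.

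The main obstacle I anticipate is exactly this gap between the ``clean'' Waldschmidt-constant argument, which only yields the version without the $+N-1$ in the numerator, and the conjectured sharp form. Closing it requires leveraging the tighter containment $I^{(rN-(N-1))}\subseteq I^r$ (Conjecture \ref{Essenconj}), and more precisely its "stable'' form $I^{(t(m+N-1)-(N-1))}\subseteq (I^{(m)})^t$, in a way that carefully tracks the additive $(N-1)$ correction through the limiting process; the delicate point is that taking $t\to\infty$ naively washes out additive constants, so the proof must either avoid the limit (arguing directly at finite $t$ and then removing the $t$ via a divisibility/interpolation trick as in Proposition \ref{conj8prop}) or must combine two inequalities whose error terms cancel. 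A secondary technical nuisance is handling the regime $m>r$ and verifying that the needed input containments are available for the specific point configuration in whichever section this appears; but these are configuration-specific inputs already established, so the real work is the uniform limiting/interpolation argument tying $\alpha(I^{(m)})$, $\alpha(I^{(r)})$, and the $(N-1)$ shifts together.
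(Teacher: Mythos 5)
Your Waldschmidt-constant sketch correctly diagnoses the problem — the limit $\gamma(I)=\lim_s\alpha(I^{(s)})/s$ annihilates additive constants, so any argument routed through $\gamma(I)$ can only yield $\alpha(I^{(m)})/(m+N-1)\leq\alpha(I^{(r)})/r$, which is strictly weaker than the conjectured inequality. But you never actually identify the mechanism that recovers the ``$+N-1$'', and the workaround you gesture at (routing through Conjecture~\ref{Essenconj} / \ref{ourconj} and then interpolating) is the wrong lever: those containments drop an $(N-1)$ from the symbolic exponent, which is not what you want, and moreover they are themselves unproven in general (indeed a counterexample to Conjecture~\ref{Essenconj} is recorded in Remark~\ref{cntrex}), so using them as ``inputs already established'' is circular.

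The paper's argument is much more direct and avoids limits entirely. The correct input is Conjecture~\ref{refinedEV2}, $I^{(t(m+N-1))}\subseteq M^{t(N-1)}(I^{(m)})^t$, and the point is that the $M^{t(N-1)}$ factor is precisely what contributes the additive $(N-1)$. One sandwiches:
\[
(I^{(t)})^{m+N-1}\subseteq I^{(t(m+N-1))}\subseteq M^{t(N-1)}(I^{(m)})^t,
\]
where the first containment is the trivial one. Applying $\alpha$ to the two ends and using $\alpha(J^k)=k\alpha(J)$ and $\alpha(M^aJ)=a+\alpha(J)$ gives
\[
(m+N-1)\,\alpha(I^{(t)})\;\geq\; t(N-1)+t\,\alpha(I^{(m)}) \;=\; t\bigl(\alpha(I^{(m)})+N-1\bigr),
\]
which is exactly the desired inequality upon taking $t=r$; no limiting process, no interpolation, and no division by anything that tends to infinity. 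This is the content of part (2) of the implication proposition in Section~\ref{theconjectures}, and it reduces Conjecture~\ref{refinedChud} to Conjecture~\ref{refinedEV2} (which the paper then verifies in the specific configurations it treats: symbolic$=$ordinary, odd points on a conic, star configurations in $\pr 2$ in characteristic zero, etc.). Separately, Section~\ref{additional} gives an unconditional partial result along a completely different line: using Lemma~\ref{EulerLemma} (Euler's identity in characteristic $0$) one gets $\alpha(I^{(r)})\geq\alpha(I^{(m)})+r-m$ for $r\geq m$, and a short algebraic manipulation then proves the inequality when $r-m<N$ (in particular when $m\leq r\leq N$). Neither of these two mechanisms appears in your proposal, and both are needed to produce the results the paper actually claims; your sketch never closes the gap you yourself flag at the end.
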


\begin{conj}[{\cite[Question 4.2.2]{refHaHu}}]\label{refinedEV}
Let $I\subseteq K[\pr N]$ be the radical ideal of a finite set of $n$ points $P_i\in\pr N$ for $N\geq 2$. Then  
$I^{(t(m+N-1))}\subseteq M^t(I^{(m)})^t$ for all positive integers $m$ and $t$.
\end{conj}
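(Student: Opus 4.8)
Since Conjecture~\ref{refinedEV} has, as far as is known, no configuration-free proof, the plan is to give one general reduction that works for every radical ideal of points and then argue configuration by configuration over the families treated in the later sections. The common engine is that by Theorem~\ref{HoHuThm} we already have $I^{(t(m+N-1))}\subseteq (I^{(m)})^t$, so the only content in the conjecture is the additional factor $M^t$.

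\textbf{Step 1 (reduce to a degree estimate).} By the regularity bound $\operatorname{reg}((I^{(m)})^t)\le t\operatorname{reg}(I^{(m)})$ used in the proof of Lemma~\ref{genlemma}, the graded pieces of $(I^{(m)})^t$ and of $M^j(I^{(m)})^t$ agree in every degree $\ge t\operatorname{reg}(I^{(m)})+j$. Taking $j=t$, it therefore suffices to prove
$$\alpha\bigl(I^{(t(m+N-1))}\bigr)\ \ge\ t\operatorname{reg}(I^{(m)})+t,$$
since then every nonzero form of $I^{(t(m+N-1))}$ already sits in a degree where $(I^{(m)})^t=M^t(I^{(m)})^t$, and the Hochster--Huneke containment finishes the job. (Lemma~\ref{genlemma}(a) records the even stronger conclusion with $M^{t(N-1)}$ under the stronger hypothesis with $t(N-1)$ in place of $t$; so whenever that hypothesis is checkable the conjecture comes for free, and $M^t$ is the cheap target.)

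\textbf{Step 2 (supply the estimate, configuration by configuration).} The task is now to bound a symbolic initial degree from below and a symbolic-power regularity from above. When symbolic and ordinary powers coincide — complete intersections, and more generally the setting of Section~\ref{symisord} — one has $\operatorname{reg}(I^{(m)})=m\operatorname{reg}(I)$ and $\alpha(I^{(km)})=km\alpha(I)$, so the displayed inequality is an elementary comparison. For star configurations, points on a smooth conic, points in a hyperplane, and general points of $\pr2$, there are explicit formulas or sharp bounds of Waldschmidt--Chudnovsky type for $\alpha(I^{(k)})$ together with regularity estimates for $I^{(m)}$, which one substitutes directly; when $m$ is even one can in addition use the multiplicativity $I^{(2j)}=(I^{(2)})^j$ from Proposition~\ref{conj8prop} to rewrite $(I^{(m)})^t$ as a single symbolic power and gain a degree.

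\textbf{Step 3 (a characteristic-zero patch for small parameters).} The inequality in Step~1 will fail for small $m$ and $t$, where $\alpha$ has not yet caught up to $\operatorname{reg}$. There I would instead iterate Lemma~\ref{EulerLemma}: in characteristic $0$, $I^{(j+1)}\subseteq MI^{(j)}$ gives $I^{(t(m+N-1))}\subseteq M^t\,I^{(t(m+N-1)-t)}$, after which one needs $I^{(t(m+N-1)-t)}\subseteq (I^{(m)})^t$, a Hochster--Huneke--type containment with $t$ fewer in the symbolic exponent — exactly the sort of statement (compare Conjectures~\ref{Essenconj} and \ref{EvoEssenconj}) that is proved for these configurations, so the two ingredients combine. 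The main obstacle is precisely that Step~2 admits no uniform argument: $\alpha(I^{(t(m+N-1))})\ge t\operatorname{reg}(I^{(m)})+t$ is not a formal consequence of the Waldschmidt--Skoda or Chudnovsky lower bounds on $\alpha(I^{(k)})/k$, it can fail outright in low degree, and hence the estimate must be re-derived inside each family of point configurations, with Step~3 absorbing the residual small cases.
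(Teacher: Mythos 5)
You correctly treat Conjecture~\ref{refinedEV} as an open conjecture that the paper only verifies for particular configurations, and your Steps 1--2 match the method the paper uses for general points in $\pr2$: reduce to the estimate $\alpha(I^{(t(m+N-1))})\ge t\operatorname{reg}(I^{(m)})+t$ via the regularity bound (Lemma~\ref{genlemma}), then supply $\alpha$- and $\operatorname{reg}$-bounds per configuration (Proposition~\ref{squareprop} and the proposition following it). However, Step~3 has a genuine gap. Iterating Lemma~\ref{EulerLemma} $t$ times leaves the residual containment $I^{(t(m+N-1)-t)}\subseteq(I^{(m)})^t$; for $N=2$ this reads $I^{(tm)}\subseteq(I^{(m)})^t$, and already for $m=1$, $t=2$ it demands $I^{(2)}\subseteq I^2$, which fails for three non-collinear points in $\pr2$ (a cubic through the three pairwise lines lies in $I^{(2)}$ but $\alpha(I^2)=4$). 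This is not a ``Hochster--Huneke-type containment with $t$ fewer in the symbolic exponent'' in any harmless sense: Hochster--Huneke needs a multiplicative gap of $N$ between symbolic and regular exponents, and subtracting $t$ collapses it to $1$. The appeal to Conjectures~\ref{Essenconj} and~\ref{EvoEssenconj} is also off-target, since those concern $I$ itself ($m=1$), not $I^{(m)}$ versus its regular powers.

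The paper's actual mechanism for the small-parameter cases in $\pr2$ is Proposition~\ref{conj8prop}, which assumes, in addition to the Euler inclusion, the structural multiplicativity $I^{(2j)}=(I^{(2)})^j$ for all $j$; this is verified separately for each configuration (Lemma~\ref{evenlem} for an odd number of points on a smooth conic, and \cite[Corollary 3.9]{refHaHu} for star configurations). With it, one has the genuine equality $I^{(tm)}=(I^{(m)})^t$ for even $m$ --- precisely the step your iteration-of-Euler cannot produce --- and for odd $m$ one instead writes $I^{(t(m+1))}=(I^{(m+1)})^t\subseteq(MI^{(m)})^t=M^t(I^{(m)})^t$ using a single Euler step, never the $t$-fold iteration. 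You do mention the multiplicativity, but you attribute it to Proposition~\ref{conj8prop} as though it came for free; there it is a hypothesis, not a conclusion, and it is the load-bearing ingredient that your plan needs to foreground and re-establish in each family of configurations.
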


\begin{conj}[{\cite[Question 4.2.3]{refHaHu}}]\label{refinedEV2}
Let $I\subseteq K[\pr N]$ be the radical ideal of a finite set of $n$ points $P_i\in\pr N$. Then 
$I^{(t(m+N-1))}\subseteq M^{t(N-1)}(I^{(m)})^t$ for all positive integers $m$ and $t$.
\end{conj}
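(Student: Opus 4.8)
The plan is to deduce the conjectured containment $I^{(t(m+N-1))}\subseteq M^{t(N-1)}(I^{(m)})^t$ from the Hochster--Huneke theorem together with a degree (postulational) argument, thereby reducing the whole question to a single numerical inequality. First I would invoke Theorem \ref{HoHuThm} to obtain the coarse containment $I^{(t(m+N-1))}\subseteq (I^{(m)})^t$; the task is then to upgrade this into the product $M^{t(N-1)}(I^{(m)})^t=(M^{N-1}I^{(m)})^t$. For this, the natural tool is Lemma \ref{genlemma}(a): once one knows
$$\alpha\big(I^{(t(m+N-1))}\big)\ \ge\ t\,\operatorname{reg}\!\big(I^{(m)}\big)+t(N-1),$$
the conjecture holds for those $m$ and $t$. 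So the plan is to verify this inequality, which amounts to combining a lower bound for $\alpha$ of the high symbolic power $I^{(t(m+N-1))}$ with an upper bound for $\operatorname{reg}(I^{(m)})$ (recall $\operatorname{reg}((I^{(m)})^t)\le t\operatorname{reg}(I^{(m)})$, which is what makes the degree bookkeeping in Lemma \ref{genlemma} work).

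For the specific families of point configurations at issue I would carry this out as follows. When symbolic and ordinary powers coincide (complete intersections, and more generally any $I$ with $I^{(k)}=I^k$ for all $k$), step one is exact, $\alpha(I^{(k)})=k\alpha(I)$, and step two is a standard regularity-of-powers estimate, so the inequality can be checked directly; in characteristic $0$ one can even bypass Lemma \ref{genlemma}, since iterating the Euler-derivative containment $I^{(j+1)}\subseteq MI^{(j)}$ of Lemma \ref{EulerLemma} gives $I^{(t(m+N-1))}=I^{(tm+t(N-1))}\subseteq M^{t(N-1)}I^{(tm)}=M^{t(N-1)}(I^{(m)})^t$ as soon as $I^{(tm)}=(I^{(m)})^t$. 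For star configurations and for points on a smooth plane conic, where explicit generators of the symbolic powers are available, both $\alpha(I^{(k)})$ and $\operatorname{reg}(I^{(m)})$ are known in closed form, so the inequality is a finite check (and for $N=2$ the statement coincides with Conjecture \ref{refinedEV}, which can also be obtained through Proposition \ref{conj8prop}). For points lying in a hyperplane I would reduce the ambient dimension, and for general points in $\pr2$ I would feed in the best available lower bounds for $\alpha$ of symbolic powers of general points together with the matching regularity estimates.

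The hard part will be the numerical inequality $\alpha(I^{(t(m+N-1))})\ge t\operatorname{reg}(I^{(m)})+t(N-1)$ in full generality, and this is exactly why the statement is posed as a conjecture rather than proved outright. On one side, sharp lower bounds for $\alpha$ of symbolic powers are essentially the content of Chudnovsky's conjecture, which is open; on the other side, $\operatorname{reg}(I^{(m)})$ can exceed $\sigma(I^{(m)})$, and hence $\alpha(I^{(m)})$, by an amount governed by the saturation degree, which is precisely what can break the degree argument. Thus I do not expect this route to yield a uniform proof; what I would actually establish is the inequality, and hence the containment, for each of the configurations above, where the invariants $\alpha(I^{(\bullet)})$ and $\operatorname{reg}(I^{(m)})$ are under explicit control.
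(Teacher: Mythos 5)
You have correctly recognized that this statement is a conjecture and that the paper only verifies it for particular configurations, so the right thing to compare is the case-by-case strategy. Your primary tool is Lemma \ref{genlemma}(a), but the paper actually invokes it only in Section \ref{general} (general points in $\pr2$, via Proposition \ref{squareprop} and the proposition following it). In the symbolic-equals-ordinary case (Section \ref{symisord}) the paper simply writes $I^{(t(m+N-1))}=I^{t(m+N-1)}=I^{t(N-1)}I^{tm}\subseteq M^{t(N-1)}(I^{(m)})^t$, which is characteristic-free and needs neither Lemma \ref{genlemma} nor Lemma \ref{EulerLemma}; your Euler-derivative iteration reaches the same conclusion but at the cost of assuming characteristic $0$. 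For stars and for odd numbers of points on a smooth conic in $\pr2$ (Sections \ref{stars} and \ref{oddsmoothconic}) the paper again bypasses Lemma \ref{genlemma}: it proves $I^{(2j)}=(I^{(2)})^j$ (and, for the odd cases, $I^{(2j+1)}=(I^{(2)})^jI$), combines this with Lemma \ref{EulerLemma}, and applies Proposition \ref{conj8prop}; you do flag this route as an alternative for $N=2$, and it is in fact the route the paper takes. Two cautions. First, the hyperplane reduction of Proposition \ref{hypprop} is established in the paper only for Conjectures \ref{fatptconj}, \ref{Essenconj} and \ref{EvoEssenconj}, not for Conjecture \ref{refinedEV2}, so the claim that you would ``reduce the ambient dimension'' for points in a hyperplane is not backed by anything in the paper and would need its own argument. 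Second, since $I^{(m)}$ is saturated when $I$ is the radical ideal of points, one has $\operatorname{reg}(I^{(m)})=\sigma(I^{(m)})$, so the saturation degree is not the real obstruction; nevertheless your broader point that $\operatorname{reg}(I^{(m)})$ can outpace the available lower bounds on $\alpha(I^{(t(m+N-1))})$ is exactly why the postulational argument does not give a uniform proof and why the statement remains a conjecture.
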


Note that if we take $m=1$ in Conjecture \ref{refinedEV2} we recover Conjecture \ref{fatptconj}.
This suggests the following conjecture, which in the same way implies Conjectures \ref{Essenconj} 
and \ref{EvoEssenconj}. It also completes a pair of analogies: 
Conjecture \ref{fatptconj} is to 
Conjecture \ref{EvoEssenconj} as Conjecture \ref{refinedEV2} is to the second part of the following conjecture,
and Conjecture \ref{Essenconj} is to 
Conjecture \ref{EvoEssenconj} as the first part of the following conjecture is to 
the second part.

\begin{conj}\label{ourconj}
Let $I\subseteq K[\pr N]$ be the radical ideal of a finite set of $n$ points $P_i\in\pr N$. Then 
$I^{(t(m+N-1)-N+1)}\subseteq (I^{(m)})^t$ and $I^{(t(m+N-1)-N+1)}\subseteq M^{(t-1)(N-1)}(I^{(m)})^t$
hold for all positive integers $m$ and $t$.
\end{conj}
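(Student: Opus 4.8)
Since Conjecture \ref{ourconj} is a conjecture rather than a settled statement, what follows is the strategy by which it can be verified for the point configurations treated in Sections \ref{symisord}--\ref{additional}, together with an indication of where a uniform argument must break down.

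The first observation is that the two asserted containments are not independent. The symbolic exponent $t(m+N-1)-N+1$ is precisely the one appearing in Lemma \ref{genlemma}(b), so once the plain containment $I^{(t(m+N-1)-N+1)}\subseteq (I^{(m)})^t$ is in hand, the reinforced containment $I^{(t(m+N-1)-N+1)}\subseteq M^{(t-1)(N-1)}(I^{(m)})^t$ follows as soon as one checks the numerical estimate
\[
\alpha\bigl(I^{(t(m+N-1)-N+1)}\bigr)\ \ge\ t\operatorname{reg}\bigl(I^{(m)}\bigr)+(t-1)(N-1).
\]
Thus the plan is to (i) prove the plain containment, and (ii) verify this degree inequality. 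Both are trivial when $t=1$ (the exponent is then $m$ and $M^{0}=R$), the case $m=1$ of (i) is exactly Conjecture \ref{Essenconj}, and the case $m=1$ of (i)+(ii) is Conjecture \ref{EvoEssenconj}; note also that $t(m+N-1)-N+1=mt+(t-1)(N-1)\ge mt$, so a posteriori $I^{(t(m+N-1)-N+1)}\subseteq I^{(mt)}$.

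When $N=2$ and $\operatorname{char}K=0$ there is a clean route. Lemma \ref{EulerLemma} supplies $I^{(j+1)}\subseteq MI^{(j)}$ for all $j\ge1$, so Proposition \ref{conj8prop} applies once one verifies the purely intrinsic identities $I^{(2j)}=(I^{(2)})^j$ for all $j$ (and, when odd $m$ occur, $I^{(2j+1)}=(I^{(2)})^jI$ for all $j$). These identities hold, for instance, whenever the symbolic and ordinary powers of $I$ agree, and can be checked directly for the point sets studied in the later sections. Proposition \ref{conj8prop} then yields $I^{(t(m+1)-1)}\subseteq M^{t-1}(I^{(m)})^t$, which is the second containment of Conjecture \ref{ourconj} for $N=2$ and, since $M^{t-1}(I^{(m)})^t\subseteq (I^{(m)})^t$, also the first. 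So for $N=2$ in characteristic $0$ the whole statement is reduced to the good behavior of the second symbolic power of $I$.

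For general $N$ the characteristic-free workhorse is the Postulational Criterion (Lemma \ref{postcrit2}) applied to the ideal $J=I^{(m)}$, which still defines a zero-dimensional subscheme and satisfies $J^{(k)}=I^{(mk)}$ for the radical ideal of a finite set of points. Hence if $t\operatorname{reg}(I^{(m)})\le\alpha(I^{(mk)})$ for a suitable $k\ge t$ with $mk\le t(m+N-1)-N+1$, then $I^{(t(m+N-1)-N+1)}\subseteq I^{(mk)}=J^{(k)}\subseteq J^{t}=(I^{(m)})^t$, giving (i); the same numerical input, strengthened by the additive $(t-1)(N-1)$, feeds Lemma \ref{genlemma}(b) to give (ii). In each section the remaining work is therefore to insert the known values of $\alpha(I^{(k)})$ and $\operatorname{reg}(I^{(m)})$ --- available when symbolic equals ordinary powers, e.g. for complete intersections (Section \ref{symisord}), for star configurations (Section \ref{stars}), for points in a hyperplane (Section \ref{HyperplaneSection}), and for general points of $\pr2$ (Section \ref{general}) --- and to check that the resulting inequalities hold, which is a routine but configuration-dependent calculation.

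The main obstacle is step (i) in full generality. Shaving the term $N-1$ off the Hochster--Huneke/Ein--Lazarsfeld--Smith symbolic exponent $t(m+N-1)$ (Theorem \ref{HoHuThm}) is exactly the difficulty already present in Conjecture \ref{Essenconj}, which is open, so a configuration-free proof is not to be expected; moreover there is no bound on $\operatorname{reg}(I^{(m)})$ in terms of $\alpha(I^{(m)})$ valid for arbitrary point sets and tight enough to make Lemma \ref{postcrit2} applicable in general. The best available course --- and the one carried out in the remaining sections --- is to restrict to families of points for which $\alpha$ and $\operatorname{reg}$ of the symbolic powers are known exactly, or pinched between matching bounds, and to verify the numerics (i) and (ii) there.
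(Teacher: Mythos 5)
Your write-up correctly treats the statement as a conjecture and gives an accurate strategic summary of how the paper actually verifies it in the later sections: reducing the $M$-containment to the plain containment plus a degree estimate via Lemma~\ref{genlemma}(b), the $N=2$, characteristic-$0$ pipeline through Lemma~\ref{EulerLemma} and Proposition~\ref{conj8prop} conditioned on $I^{(2j)}=(I^{(2)})^j$ (and $I^{(2j+1)}=(I^{(2)})^jI$), and $\alpha$/$\operatorname{reg}$ estimates for general points. Two points of divergence from the paper are worth flagging.

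First, in the symbolic-equals-ordinary case (Section~\ref{symisord}) the paper does not route through any degree-counting or the Postulational Criterion. It proves the \emph{stronger} $M$-containment directly by the one-line factorization
$I^{(t(m+N-1)-N+1)}=I^{t(m+N-1)-N+1}=I^{(t-1)(N-1)}I^{mt}\subseteq M^{(t-1)(N-1)}(I^{(m)})^t$,
and the plain containment then comes for free; your proposed pipeline (prove (i), then upgrade via Lemma~\ref{genlemma}(b)) is an overcomplication there. Similarly, in Proposition~\ref{squareprop} the plain containment is obtained by comparing $(I^{(t(m+1)-1)})_i$ with $((I^{(m)})^t)_i$ degree by degree using $\operatorname{reg}((I^{(m)})^t)\le t\operatorname{reg}(I^{(m)})$; that is essentially your ``apply Lemma~\ref{postcrit2} to $J=I^{(m)}$'' idea but stated a bit more sharply, so you are aligned with the paper there. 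Also note that Section~\ref{HyperplaneSection} only establishes the $m=1$ case of Conjecture~\ref{ourconj} (Conjectures~\ref{Essenconj} and \ref{EvoEssenconj}), not the general-$m$ statement, so it is not a full source of verified instances.

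Second, you refer to Conjecture~\ref{Essenconj} as open. The paper's Remark~\ref{cntrex} records that, during review, a 12-point configuration in $\pr2_{\mathbf C}$ was found for which $I^{(3)}\not\subseteq I^2$, giving a counterexample to Conjecture~\ref{Essenconj} with $r=N=2$ and hence to Conjecture~\ref{ourconj} with $N=t=2$, $m=1$. So the conjecture you are discussing is actually \emph{false} as stated; your concluding ``where a uniform argument must break down'' should cite this concrete failure rather than leaving it hypothetical.
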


As we just saw, some conjectures are stronger than others. In fact the following implications hold:

\begin{prop}\ 
\begin{itemize}
\item[(1)] Conjecture \ref{EvoEssenconj} implies Conjecture \ref{EvoEssenconj2}.
\item[(2)] Conjecture \ref{refinedEV2} implies Conjectures \ref{refinedChud} and \ref{refinedEV},
and, when $I$ is radical, Conjecture \ref{fatptconj}.
\item[(3)] Conjecture \ref{ourconj} implies Conjectures \ref{EvoEssenconj} and \ref{EvoEssenconj2},
and, when $I$ is the radical ideal of a finite set of points, Conjecture \ref{Essenconj}.
\end{itemize}
\end{prop}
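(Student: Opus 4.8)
The plan is to prove each of the three implications by unwinding definitions and chaining known containments; all three are essentially bookkeeping arguments built on Theorem \ref{HoHuThm}, Lemma \ref{EulerLemma}-type facts already available as hypotheses, and the elementary identity $\alpha(J^r)=r\alpha(J)$.

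\textbf{Part (1).} Here I would start from the containment $I^{(rN-(N-1))}\subseteq M^{(r-1)(N-1)}I^r$ supplied by Conjecture \ref{EvoEssenconj} and pass to initial degrees. Since $\alpha$ is monotone under containment and $\alpha(M^aJ)=a+\alpha(J)$ for any nonzero homogeneous $J$ (because $M^aJ$ begins exactly $a$ degrees later than $J$), we get
$$\alpha(I^{(rN-(N-1))})\ge \alpha\bigl(M^{(r-1)(N-1)}I^r\bigr)=(r-1)(N-1)+\alpha(I^r)=(r-1)(N-1)+r\alpha(I),$$
which is exactly the inequality of Conjecture \ref{EvoEssenconj2}. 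This is immediate once one observes the additivity of $\alpha$ on $M$-multiples.

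\textbf{Part (2).} Assuming Conjecture \ref{refinedEV2}, i.e. $I^{(t(m+N-1))}\subseteq M^{t(N-1)}(I^{(m)})^t$, I would derive the three consequences. For Conjecture \ref{refinedEV}, simply note $M^{t(N-1)}(I^{(m)})^t\subseteq M^t(I^{(m)})^t$ since $N\ge2$ forces $t(N-1)\ge t$. For Conjecture \ref{fatptconj} with $I$ radical, set $m=1$, so $I^{(1)}=I$ and the statement reads $I^{(rN)}\subseteq M^{r(N-1)}I^r$ after renaming $t=r$. For Conjecture \ref{refinedChud}, take initial degrees in $I^{(t(m+N-1))}\subseteq M^{t(N-1)}(I^{(m)})^t$ to get $\alpha(I^{(t(m+N-1))})\ge t(N-1)+t\alpha(I^{(m)})$; then for a given $r$ and $m$, choose $t$ and a remainder so that $t(m+N-1)$ is close to $r\cdot(\text{something})$ — more precisely, the cleanest route is to apply this with the roles reversed or to use the sub-additivity $\alpha(I^{(a+b)})\le \alpha(I^{(a)})+\alpha(I^{(b)})$ together with $\alpha(I^{((t(m+N-1))})\ge t(\alpha(I^{(m)})+N-1)$, divide by $t(m+N-1)$, and let $t\to\infty$ along a suitable sequence so that $m+N-1$ plays the role of $r$ in the target inequality. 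I would need to be careful matching indices; the inequality $\frac{\alpha(I^{(m)})+N-1}{m+N-1}\le\frac{\alpha(I^{(r)})}{r}$ should fall out by taking $m\to\infty$ appropriately, or directly by setting $m$ in \ref{refinedEV2} equal to $r$ and reading off $\alpha(I^{(t(r+N-1))})\ge t(\alpha(I^{(r)})+N-1)$, then using super-additivity of $\alpha(I^{(\bullet)})$ on the left to compare with $\alpha(I^{(r')})$ for the desired $r'=t(r+N-1)$ — rearranging gives exactly the claimed ratio bound with the names $m\leftrightarrow r$ swapped.

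\textbf{Part (3).} Assuming Conjecture \ref{ourconj}, which asserts both $I^{(t(m+N-1)-N+1)}\subseteq (I^{(m)})^t$ and $I^{(t(m+N-1)-N+1)}\subseteq M^{(t-1)(N-1)}(I^{(m)})^t$: for Conjecture \ref{Essenconj} (with $I$ the radical ideal of points, hence $I^{(1)}=I$), set $m=1$, $t=r$, so $t(m+N-1)-N+1=rN-N+1$ and $(I^{(m)})^t=I^r$, giving $I^{(rN-(N-1))}\subseteq I^r$. For Conjecture \ref{EvoEssenconj}, again take $m=1$, $t=r$ in the second containment: $I^{(rN-(N-1))}\subseteq M^{(r-1)(N-1)}I^r$, exactly as desired. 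Finally Conjecture \ref{EvoEssenconj2} follows from Conjecture \ref{EvoEssenconj} by Part (1) (or directly by taking $\alpha$ of the second containment with $m=1$, $t=r$ as in Part (1)).

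The main obstacle is Part (2)'s derivation of Conjecture \ref{refinedChud}: the other items are one-line index substitutions or trivial enlargements of an $M$-power, but extracting the asymptotic ratio inequality $\frac{\alpha(I^{(m)})+N-1}{m+N-1}\le\frac{\alpha(I^{(r)})}{r}$ from the family of containments requires correctly choosing the parameter $t$ (and the index in which one applies \ref{refinedEV2}) and invoking the right monotonicity/subadditivity of $\alpha(I^{(\bullet)})$ so that the limit lands on precisely the stated bound rather than a weaker one; I would want to double-check the edge cases and the direction of the inequality there.
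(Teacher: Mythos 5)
Parts (1) and (3) are correct and match the paper's argument exactly: pass to initial degrees using $\alpha(M^a J)=a+\alpha(J)$ and $\alpha(J^r)=r\alpha(J)$ for (1), and set $m=1$, $t=r$ in Conjecture \ref{ourconj} (noting $I^{(1)}=I$ since $I$ is radical) for (3), with the $\alpha$-inequality then following as in (1). In part (2), the derivations of Conjecture \ref{refinedEV} (just weaken $M^{t(N-1)}$ to $M^t$ when $N\ge 2$) and of Conjecture \ref{fatptconj} (set $m=1$, $t=r$) are also correct and identical to the paper.

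The gap is in your derivation of Conjecture \ref{refinedChud}. You correctly identify that the extra ingredient is the behaviour of $\alpha(I^{(\bullet)})$ under addition of exponents, but you repeatedly call it ``super-additivity,'' which has the wrong sign: since $I^{(a)}I^{(b)}\subseteq I^{(a+b)}$, one has $\alpha(I^{(a+b)})\le\alpha(I^{(a)})+\alpha(I^{(b)})$, i.e.\ \emph{sub}-additivity, equivalently $(I^{(t)})^{m+N-1}\subseteq I^{(t(m+N-1))}$. Your proposed limiting procedures (``let $t\to\infty$,'' ``taking $m\to\infty$,'' choosing remainders) are unnecessary and signal that you have not yet seen how the indices actually match. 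The correct argument is a direct chain with no asymptotics: from
\[
(I^{(t)})^{m+N-1}\ \subseteq\ I^{(t(m+N-1))}\ \subseteq\ M^{t(N-1)}(I^{(m)})^t
\]
one reads off
\[
(m+N-1)\,\alpha(I^{(t)})\ =\ \alpha\bigl((I^{(t)})^{m+N-1}\bigr)\ \ge\ \alpha\bigl(M^{t(N-1)}(I^{(m)})^t\bigr)\ =\ t\bigl(\alpha(I^{(m)})+N-1\bigr),
\]
which, after dividing by $t(m+N-1)$ and renaming $t$ as $r$, is exactly Conjecture \ref{refinedChud}. This is the paper's argument, and you should replace the speculative paragraph with it; as written, your step invoking super-additivity would not close.
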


\begin{proof}
(1) Suppose Conjecture \ref{EvoEssenconj}  holds. From $I^{(rN-(N-1))}\subseteq M^{(r-1)(N-1)}I^r$ one has
\[
\alpha(I^{(rN-(N-1))})\geq \alpha(M^{(r-1)(N-1)}I^r)=\alpha(M^{(r-1)(N-1)})+\alpha(I^r)=(r-1)(N-1)+r\alpha(I)
\]
and hence Conjecture \ref{EvoEssenconj2} holds.

(2) Suppose Conjecture \ref{refinedEV2} holds. The implication of Conjecture \ref{refinedEV}  is obvious. 
By Conjecture  \ref{refinedEV2}, taking $t=r$ and $m=1$, 
one has $I^{(rN)}\subseteq M^{r(N-1)}(I^{(m)})^r=M^{r(N-1)}I^r$. 
Hence Conjecture  \ref{fatptconj} holds. 
Again from $I^{(t(m+N-1))}\subseteq M^{t(N-1)}(I^{(m)})^t$ of  
Conjecture  \ref{refinedEV2}, and from $(I^{(t)})^{m+N-1}\subseteq I^{(t(m+N-1))}$ one has
\[
t(\alpha(I^{(m)})+N-1)=\alpha(M^{t(N-1)}(I^{(m)})^t)\leq \alpha((I^{(t)})^{m+N-1})=(m+N-1)\alpha(I^{(t)})
\]
from which Conjecture \ref{refinedChud} follows.

(3) Conjecture \ref{ourconj} implies Conjectures 
\ref{Essenconj} and \ref{EvoEssenconj} by taking $m=1$; then note that
Conjecture \ref{EvoEssenconj} implies Conjecture \ref{EvoEssenconj2} by (1).
\end{proof}

\begin{rem}\label{cntrex}
While this paper was under review a counterexample to Conjecture \ref{Essenconj}
over the complex numbers in the case that $r=N=2$ was posted to the arXiv \cite{refDST}.
As a result, some adjustment to the statements of Conjectures \ref{Essenconj}, 
\ref{EvoEssenconj} and \ref{ourconj} will be needed.
The counterexample involves a specific configuration of 12 points in $\pr2_{\bf C}$. 
These 12 points are arranged with respect to 9 lines such that each line 
contains 4 of the points and each point lies on 3 of the lines. 
A similar counterexample arises by taking any 12 of the 13 points 
of the projective plane $\pr2_K$ over the field $K={\bf Z}/3{\bf Z}$ of three elements. 
(Note that $\pr2_{K}$ has 13 points and 13 lines. Removing one point leaves 12 points 
and removing the 4 lines through that point leaves 9 lines.
These 12 points and 9 lines also are such that each line 
contains 4 of the points and each point lies on 3 of the lines.) It turns out, 
both for the counterexample given in \cite{refDST}
and for 12 points in $\pr2_{K}$ with $K={\bf Z}/3{\bf Z}$, 
that the form $F$ obtained as the product of the 9 linear forms
defining the lines is in $I^{(3)}$ but not in $I^2$, where $I$ is the ideal of the 12 points.
Thus $I^{(3)}\not\subseteq I^2$. This also means that $I^{(3)}\not\subseteq MI^2$, and hence
gives rise to a counterexample to Conjecture \ref{EvoEssenconj} when $N=r=2$ 
and to Conjecture \ref{ourconj} when $N=r=2$ and $m=1$. 

However, by \cite[Examples 8.4.4 and 8.4.5]{refB. et al}, Conjecture \ref{Essenconj}
holds when $I$ is an ideal of points over a ground field of characteristic 2,
and when $I$ is a monomial ideal in any characteristic. 
This indicates that the conjecture in some cases depends
on the characteristic, but also that it holds in many cases. 
In this paper, we verify other cases for which the conjectures hold.
It seems that any failures must be fairly special, like the 12 points discussed above.
These observations raise some interesting new lines of investigation. 
Can the failures, or the characteristics in which they occur, be classified?
Alternatively, do Conjectures \ref{Essenconj}, \ref{EvoEssenconj} 
and \ref{ourconj} hold at least when $r>N$? 
Computer calculations suggest this may be the case for the 12 points in 
$\pr2_K$ over $K={\bf Z}/3{\bf Z}$.
\end{rem}

\section{Assuming symbolic equals ordinary}\label{symisord}

Suppose $I\subsetneq K[\pr{N}]$ is a homogeneous ideal such that $I^{(r)}=I^r$ for all $r$, such as is 
the case if $I$ is a complete intersection (see \cite[Lemma 5, Appendix 6]{refZS}). Note that $I\subseteq M$.

Conjectures \ref{fatptconj}, \ref{Essenconj}, \ref{EvoEssenconj}, 
\ref{refinedEV}, \ref{refinedEV2} and \ref{ourconj} all hold in this situation, by
very similar proofs. We demonstrate the method by proving Conjecture \ref{ourconj}.
The second part of Conjecture \ref{ourconj},
$I^{(t(m+N-1)-N+1)}\subseteq M^{(t-1)(N-1)}(I^{(m)})^t$,
holds (and hence so does the first part
$I^{(t(m+N-1)-N+1)}\subseteq (I^{(m)})^t$), since 
$$I^{(t(m+N-1)-N+1)}=I^{t(m+N-1)-N+1}=I^{(t-1)(N-1)}I^{mt}\subseteq 
M^{(t-1)(N-1)}I^{mt}=M^{(t-1)(N-1)}(I^{(m)})^t.$$

Now consider Conjecture \ref{p2conj}, that $m/r\ge 2\alpha(I)/(\alpha(I)+1)$
implies $I^{(m)}\subseteq I^r$ for $N=2$. This holds 
since $I^{(m)}=I^m$ and since $2\alpha(I)/(\alpha(I)+1)\geq 1$, so
$m/r\ge 2\alpha(I)/(\alpha(I)+1)$ implies $m\geq r$, hence $I^m\subseteq I^r$.

Conjectures \ref{EvoEssenconj2} and \ref{refinedChud} have very similar proofs.
We leave \ref{EvoEssenconj2} to the reader. Consider \ref{refinedChud}, that
$\frac{\alpha(I^{(m)})+N-1}{m+N-1}\leq \frac{\alpha(I^{(r)})}{r}$.
This holds since
$\alpha(I)\geq 1$ but
$$\frac{m\alpha(I)+N-1}{m+N-1}=\frac{\alpha(I^{(m)})+N-1}{m+N-1}\leq
\frac{\alpha(I^{(r)})}{r}=\frac{\alpha(I^r)}{r}=\frac{r\alpha(I)}{r}= \alpha(I)$$ 
is equivalent to $m\alpha(I)+N-1\leq \alpha(I)(m+N-1)$ and hence to $1\leq \alpha(I)$.

In particular, if $N=2$, all of the conjectures hold if $I$ is the radical ideal 
of a single point, or the radical ideal of $n$ points on a 
smooth plane conic when $n$ is even,
since then the points comprise a complete intersection.
In the next section we consider the case of the radical ideal of an odd number of points 
on a smooth plane conic. (One can also ask about the case of ideals
of points on reducible conics; i.e., of points on a pair of lines in the plane. 
See \cite{refDJ} for some results in this direction.)

\section{Odd numbers of points on a smooth plane conic}\label{oddsmoothconic}

The case of $n=3$ points on a smooth plane conic $C$ (so $N=2$) is somewhat special and mostly
known, so we treat that
case with some initial remarks. Conjecture \ref{fatptconj} holds by \cite[Corollary 3.9]{refHaHu}
since $n=3$ points on a smooth conic comprise a star configuration.
Conjectures \ref{Essenconj} and \ref{EvoEssenconj2}
follow from Conjecture \ref{EvoEssenconj}, which holds
for the $n=3$ case by \cite[Corollary 4.1.7]{refHaHu}.
Conjecture \ref{p2conj} holds for $n=3$ since $\alpha(I)=2$ so
$m/r\ge 2\alpha(I)/(\alpha(I)+1)$ is equivalent to
$3m/2\geq2r$, but $\operatorname{reg}(I)=2$ and, by \cite[Lemma 2.4.1]{refBH2},
$3m/2\leq \alpha(I^{(m)})$, so assuming $3m/2\geq2r$ we have
$r\operatorname{reg}(I)=2r\leq 3m/2\leq \alpha(I^{(m)})$,
hence $I^{(m)}\subseteq I^r$ by Lemma \ref{postcrit2}.
Conjectures \ref{refinedChud} and \ref{refinedEV} follow from
Conjecture \ref{refinedEV2}, which holds for $n=3$ in characteristic 0
by Section \ref{stars}.

Before continuing, we introduce another useful numerical character:  
for any homogeneous ideal $0 \not = I \subseteq R = K[\pr 2]$, let $\beta(I)$ be the 
smallest integer $t$ such that $I_t$ contains a regular sequence of length two.

\begin{lem}\label{evenlem}
Let $I$ be the radical ideal of $n\geq5$ points on a smooth conic $C$ in $\pr2$, where $n$ is odd.
Then $I^{(mr)}=(I^{(m)})^r$ for any even $m$.
\end{lem}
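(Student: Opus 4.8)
The plan is to exploit the fact that $n$ points on a smooth conic $C\cong\pr1$ in $\pr2$ are, via the degree-$2$ Veronese-type embedding, governed entirely by the restriction to $C$, where symbolic powers are easy to compute. Concretely, I would first recall that $C$ is a smooth rational curve of degree $2$, so $\mathcal{O}_C(1)=\mathcal{O}_{\pr1}(2)$, and the ideal sheaf of a fat-point subscheme $Z=\sum a_iP_i$ supported on $C$ sits in the standard exact sequence
$$0\to I_{\pr2}(C)(t-2)\to I_Z(t)\to I_{Z/C}(t)\to 0,$$
where $I_{\pr2}(C)$ is generated by the conic $Q$, and $I_{Z/C}(t)=H^0(\mathcal{O}_{\pr1}(2t-\sum a_i))$. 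The point is that $H^1(I_Z(t))=0$ as soon as $2t-\sum a_i\ge -1$, i.e. the postulation of fat points on a conic is \emph{always} the expected one once we are past a small degree; this is the classical ``points on a conic impose independent conditions'' computation, and I would cite it or re-derive it from the sequence above. In particular $\alpha(I^{(k)})$ and $\operatorname{reg}(I^{(k)})$ are explicitly computable: for the reduced ideal $I$ of $n$ points with $n=2g+1$ odd, $I^{(k)}=I(P_1)^k\cap\cdots\cap I(P_n)^k$ has $\alpha(I^{(k)})=\lceil nk/2\rceil$ and $\operatorname{reg}(I^{(k)})=\alpha(I^{(k)})$ or $\alpha(I^{(k)})+1$ depending on the parity of $nk$ (the ``$+1$'' occurs exactly when $nk$ is even, i.e. when $k$ is even, since $n$ is odd).

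Next I would set up the containment $(I^{(m)})^r\subseteq I^{(mr)}$, which is automatic, and prove the reverse inclusion $I^{(mr)}\subseteq(I^{(m)})^r$ for even $m$ using Lemma~\ref{postcrit2}: it suffices to show $r\operatorname{reg}(I^{(m)})\le\alpha(I^{(mr)})$, \emph{after} reducing to the saturated setting (the ideals here define $0$-dimensional schemes, so Lemma~\ref{postcrit2} applies directly). Here is where the parity hypothesis does the work: since $m$ is even, $nm$ is even, so $\alpha(I^{(m)})=nm/2$ and — crucially — I expect $\operatorname{reg}(I^{(m)})=nm/2$ as well (no $+1$), because when the degree-count $2t-nm$ hits exactly $0$ the scheme $Z/C$ on $\pr1$ is cut out by a single section and the sequence shows $I^{(m)}$ is already saturated and $2$-regular past $nm/2$; contrast with the odd-$m$ case where $2t-nm=-1$ forces an extra degree. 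Then $r\operatorname{reg}(I^{(m)})=rnm/2$, while $\alpha(I^{(mr)})=\lceil nmr/2\rceil=nmr/2$ since $nmr$ is even. Thus $r\operatorname{reg}(I^{(m)})=\alpha(I^{(mr)})$, the hypothesis of Lemma~\ref{postcrit2} is met with equality, and $I^{(mr)}\subseteq(I^{(m)})^r$ follows; combined with the trivial inclusion this gives equality.

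I would organize the write-up as: (1) the conic exact sequence and the resulting formulas for $h_{I^{(k)}}$, $\alpha(I^{(k)})$; (2) the computation of $\operatorname{reg}(I^{(k)})$ via $\operatorname{reg}=\max(\operatorname{satdeg},\sigma(\operatorname{sat}))$ as recalled in Section~\ref{prelim}, emphasizing that for even $k$ there is no regularity defect; (3) the application of Lemma~\ref{postcrit2}. The main obstacle is step (2): pinning down $\operatorname{reg}(I^{(m)})$ exactly on the nose for even $m$, rather than just an upper bound, because the whole argument hinges on the numerical equality $r\operatorname{reg}(I^{(m)})=\alpha(I^{(mr)})$ with no slack — an off-by-one in the regularity would break it. One must verify that $I^{(m)}$ is generated (equivalently, $\sigma$ of its saturation) exactly in degree $nm/2$, which should follow because the restriction $I^{(m)}\to H^0(\mathcal{O}_{\pr1})$ in degree $nm/2$ is onto a $1$-dimensional space realized by a single curve of degree $nm/2$ containing all the $m$-fold points, and higher degrees are then generated by multiplying by linear forms plus the conic; I would confirm this by a direct Hilbert-function/saturation check using the exact sequence, which is routine but must be done carefully. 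A secondary point to handle cleanly is the case distinction $n=3$ versus $n\ge5$ — for $n=3$ the points are a complete intersection and the statement is subsumed by Section~\ref{symisord}, so the hypothesis $n\ge5$ is there only to make ``$C$ smooth, $n$ odd, not a complete intersection'' the genuinely new content, and nothing in the above argument actually needs $n\ge5$.
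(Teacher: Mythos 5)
Your proposed strategy breaks down at the very first numerical input: the formula $\alpha(I^{(k)})=\lceil nk/2\rceil$ is not correct. For $n\geq5$ points on a smooth conic $C$ with defining form $Q$, the power $Q^{k}$ has degree $2k$ and vanishes to order exactly $k$ at each point (since $Q$ is smooth), so $Q^{2r}\in I^{(2r)}_{4r}$ and hence $\alpha(I^{(2r)})\leq 4r$. The paper records the correct value $\alpha(I^{(2r)})=4r$. For $n\geq 5$ your proposed value $\lceil n\cdot 2r/2\rceil=nr$ exceeds this by a wide margin. What you have written down is not the initial degree $\alpha$; it is (for even $k$) the paper's other invariant $\beta(I^{(k)})$, the least degree in which the linear system becomes free of the fixed conic component, i.e.\ the least degree $t$ with $I^{(k)}_t$ containing a length-two regular sequence. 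Conflating these two invariants is the error: the presence of the fixed conic makes $\alpha$ much smaller than the B\'ezout threshold while simultaneously making $\operatorname{reg}$ larger.

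This is not a repairable off-by-one. Once one inserts the correct $\alpha(I^{(mr)})=2mr$, the hypothesis $r\operatorname{reg}(I^{(m)})\le\alpha(I^{(mr)})$ of Lemma~\ref{postcrit2} becomes $\operatorname{reg}(I^{(m)})\le 2m$, and this fails for every $n\geq5$: the scheme defined by $I^{(m)}$ has degree $n\binom{m+1}{2}$, and a direct computation with the residuation sequence (e.g.\ $n=5$, $m=2$ gives $\dim I^{(2)}_4=1$, $\dim I^{(2)}_5=6$, so $\tau=5$ and $\operatorname{reg}(I^{(2)})=6>4$) shows $\operatorname{reg}(I^{(m)})\approx nm/2+1$, which is strictly larger than $2m$ as soon as $n\geq 5$. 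So the Postulational Criterion is the wrong tool here, not just a tool applied with the wrong numbers: it is constitutionally blind to the fact that the low-degree part of $I^{(mr)}$ consists of the fixed conic raised to a power times something in a lower symbolic power, which is exactly the structure that makes the containment true. (Incidentally, this is also why $n\ge5$ matters: it guarantees the conic through the points is unique and is the only possible fixed component, which you dismiss as inessential.) The paper sidesteps the problem by invoking a different criterion, Proposition~3.5 of \cite{refHaHu}, which tests the product $\alpha(I^{(2r)})\beta(I^{(2r)})$ against $(2r)^2 n$; the identity $4r\cdot rn=(2r)^2n$ holds exactly, and that criterion is precisely designed to handle linear systems with a fixed divisorial component, after which the general even case $m=2s$ follows formally from the case $m=2$.
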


\begin{proof}
We first note that $I^{(2r)}=(I^{(2)})^r$. This is because $\alpha(I^{(2r)})=4r$
and $\beta(I^{(2r)})=rn$ (because it is known \cite{refHa}
that the only fixed component
for the linear system of curves of degree $d$ through $n\geq 5$ points of multiplicity $r$
is the conic $C$ through all $n$ points,
and these occur only if forced by B\'ezout).
Now we have $\alpha(I^{(2r)})\beta(I^{(2r)})=(2r)^2n$, hence
$I^{(2r)}=(I^{(2)})^r$ by Proposition 3.5 of \cite{refHaHu}.
Thus for $m=2s$ we have $I^{(mr)}=I^{(2sr)}=(I^{(2)})^{sr}=((I^{(2)})^s)^r=(I^{(m)})^r$.
\end{proof}

We now deal with the case of odd symbolic powers:

\begin{lem}\label{oddlem}
If $I$ is the radical ideal of $n\geq5$ points on a smooth conic $C$ in $\pr2$ with $n$ odd,
then $I^{(2r+1)}=(I^{(2)})^rI$.
\end{lem}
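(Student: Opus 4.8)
The plan is to prove the equality $I^{(2r+1)}=(I^{(2)})^rI$ by establishing the two inclusions separately, with the ``$\supseteq$'' direction being essentially formal and the ``$\subseteq$'' direction requiring the bulk of the work. For the easy inclusion, note that $(I^{(2)})^r\subseteq I^{(2r)}$ and $I=I^{(1)}$, so $(I^{(2)})^rI\subseteq I^{(2r)}I^{(1)}\subseteq I^{(2r+1)}$, since products of symbolic powers of a radical ideal of points land in the symbolic power of the sum of the exponents (this is immediate from $I^{(a)}I^{(b)}=\bigl(\cap_j I(P_j)^{a}\bigr)\bigl(\cap_j I(P_j)^{b}\bigr)\subseteq \cap_j I(P_j)^{a+b}=I^{(a+b)}$). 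So the content is entirely in the reverse inclusion $I^{(2r+1)}\subseteq (I^{(2)})^rI$.

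For the hard inclusion, I would imitate the numerical strategy used in Lemma \ref{evenlem}, which relied on Propositions 3.5 and the Hilbert-function/Bézout analysis of linear systems through points of equal multiplicity on a conic. Here I want to analyze the ideal $J:=(I^{(2)})^rI$, compute (or bound) the relevant numerical invariants $\alpha$ and $\beta$ (or the Hilbert function), and compare with $I^{(2r+1)}$. The key geometric input is the same result of \cite{refHa} quoted in the previous lemma: for the linear system of plane curves of degree $d$ passing through the $n\geq 5$ points with assigned multiplicities, the only possible fixed component is the conic $C$, and it appears only when Bézout forces it — i.e., the fixed part of $|dL - \sum m_iP_i|$ is $kC$ where $k$ is determined by intersecting with $C$. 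Applying this to the multiplicity-$(2r+1)$ system: a curve of degree $d$ through the $n$ points with multiplicity $2r+1$ meets $C$ (degree $2$) in $2d$ points counted with multiplicity, but is forced to contain the $n\geq 5$ points each with multiplicity $2r+1$, contributing $n(2r+1) > 2d$ unless $d$ is large; so $C$ splits off as a fixed component, and one peels off copies of $C$ until the residual system through the points with multiplicity dropping by $2$ each time a copy of $C$ is removed. Since the defining form of $C$ is a general element of $I^{(2)}$ (indeed $\alpha(I^{(2)}) = 4 > 2$ forces $C$'s square... — more precisely $C\in I$, $C^2\in I^{(2)}$ when... let me be careful), this peeling identifies $I^{(2r+1)}$ with $C^{\,a}\cdot(\text{residual})$ for appropriate $a$, and the residual is caught between $I$ and $I^{(\text{something})}$.

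Concretely, I expect the cleanest route is: show $\alpha(I^{(2r+1)}) = 2(2r+1)$ and that the degree-$2(2r+1)$ part consists precisely of $Q^{2r+1}$ times... no — rather, show that every $F\in I^{(2r+1)}$ is divisible by $Q^r$, where $Q$ is the conic form, because $Q^r$ is the fixed component (by the Bézout analysis, since $2\deg F$ must be at least... one checks $2r+1$ multiplicity at $n\ge 5$ points on $C$ forces multiplicity-$r$-along-$C$ in the appropriate range, consistent with \cite{refHa}), giving $I^{(2r+1)} = Q^r \cdot \bigl(I^{(2r+1)}:Q^r\bigr)$; then identify the colon ideal $I^{(2r+1)}:Q^r$ with $I$ by checking that removing $r$ copies of $C$ drops each point's multiplicity by exactly $2r$, leaving multiplicity $1$, i.e.\ leaving $I$; and finally observe $Q^r\cdot I = (Q^2)^{... }$ — rather, $(I^{(2)})^r I$ is generated, after the same peeling argument applied to $I^{(2)}$ itself (which gives $I^{(2)} = Q\cdot(\text{something})$, indeed $\alpha(I^{(2)})=4=2+2$ means $Q$ times a degree-2-through-points... one has $I^{(2)} \ni Q^2$ but also other forms), so care is needed: the correct statement is that $(I^{(2)})^r = Q^r \cdot (I^{(2)}:Q)^r$-ish and one matches up the two sides degree by degree via Hilbert functions, using Proposition 3.5 of \cite{refHaHu} as in Lemma \ref{evenlem} to convert the numerical coincidence $\alpha\cdot\beta = (\text{degree})^2 n$-type identity into the ideal equality.

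The main obstacle, I expect, is handling the odd, ``non-self-similar'' part cleanly: unlike the even case where $I^{(2r)}=(I^{(2)})^r$ fits the $\alpha\beta = (\deg)^2 n$ criterion of \cite[Prop. 3.5]{refHaHu} on the nose, the product $(I^{(2)})^rI$ is not a power of a single ideal, so Proposition 3.5 does not apply directly and one must instead argue via the colon-ideal factorization $I^{(2r+1)} = Q^r\cdot(I^{(2r+1)}:Q^r)$ and separately prove $I^{(2r+1)}:Q^r = I$ and $(I^{(2)})^r I : Q^r = I$ while also controlling that no \emph{extra} factor of $Q$ is hidden on either side — i.e.\ verifying that the fixed component of the multiplicity-$(2r+1)$ system is \emph{exactly} $rC$ and not $(r+1)C$ or more. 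This last point is precisely where the hypothesis $n\geq 5$ (rather than $n=3$) and the sharpness of the Bézout bound enter, and getting the inequality $n(2r+1) \leq 2d$ versus $> 2d$ to cut at the right place will be the delicate bookkeeping.
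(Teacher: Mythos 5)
There is a genuine gap, and it stems from a multiplicity miscount together with an overclaim about fixed components. You propose the factorization $I^{(2r+1)}=Q^r\cdot\bigl(I^{(2r+1)}:Q^r\bigr)$ (with $Q$ the defining form of $C$) and then want to show the colon ideal equals $I$ by asserting that ``removing $r$ copies of $C$ drops each point's multiplicity by exactly $2r$.'' But $C$ is a smooth conic passing simply through each point $P_i$, so dividing by $Q$ lowers the multiplicity at each $P_i$ by $1$, not by $2$; dividing by $Q^r$ would take you from $I^{(2r+1)}$ toward $I^{(r+1)}$, not $I$. Moreover, the proposed global factorization is false: for $t$ large (specifically $2t\geq n(2r+1)$), the degree-$t$ piece of $I^{(2r+1)}$ is fixed-component free, so it contains forms not divisible by $Q$, let alone by $Q^r$; the equality $I^{(2r+1)}=Q^r\cdot\bigl(I^{(2r+1)}:Q^r\bigr)$ therefore already fails in high degree, and you cannot hope to match $(I^{(2)})^rI$ with something of the shape $Q^r\cdot I$, which is a much smaller ideal.

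The paper's proof instead reduces the statement (via Lemma~\ref{evenlem}, $I^{(2r)}=(I^{(2)})^r$) to the single inclusion $I^{(2r+1)}\subseteq I^{(2r)}I$, and proves it degree by degree. In low degrees $4r+2\leq t\leq (n(2r+1)-1)/2$, B\'ezout forces $C$ to be a fixed component of $(I^{(2r+1)})_t$ and one peels off exactly \emph{one} copy of $C$, using $Q\in I_2$ to land in $(I^{(2r)}I)_t$; in high degrees $t\geq(n(2r+1)+1)/2$ the relevant linear systems are fixed-component free and one instead invokes \cite[Proposition~2.4]{refBH}, which gives the surjectivity $(I^{(2r)})_{t-(n+1)/2}\cdot I_{(n+1)/2}=(I^{(2r+1)})_t$ directly. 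Your intuition that B\'ezout and the bound $n\geq5$ should play a role, and that the two degree regimes behave differently, is right; but the argument needs this one-copy-at-a-time peeling in the low range and a multiplication-map surjectivity result (not a colon-ideal factorization) in the high range.
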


\begin{proof}
It is enough to show that $I^{(2r+1)}=(I^{(2r)})I$, since $I^{(2r)}=(I^{(2)})^r$.
Since $(I^{(2r)})I\subseteq I^{(2r+1)}$, it is enough to show 
$(I^{(2r+1)})_t\subseteq((I^{(2r)})I)_t$ for every $t\geq\alpha(I^{(2r+1)})=2(2r+1)$.
Let $m=2r+1$ and let $F$ be the form defining $C$. If $4r+2\leq t\leq (mn-1)/2$, then
$2t-mn<0$, so by B\'ezout we know that $C$ is a fixed component of
$(I^{(2r+1)})_t$, hence $(I^{(2r+1)})_t=F\cdot (I^{(2r)})_{t-2}\subseteq (I^{(2r)})_{t-2}I_2
\subseteq ((I^{(2r)})I)_t$. So say $t\geq (mn+1)/2$. Note that $I_{(n+1)/2}$
is fixed component free since $2(n+1)/2-n\geq0$, and $(I^{(m-1)})_j$
is fixed component free for $j\geq n(m-1)/2$, since 
$2j-(m-1)n\geq 2n(m-1)/2-(m-1)n\geq0$, so by 
\cite[Proposition 2.4]{refBH} we have $(I^{(m-1)})_jI_{(n+1)/2}=
(I^{(m)})_{j+((n+1)/2)}$. 
But let $j=t-(n+1)/2$. Then $t\geq (mn+1)/2$ implies
$j\geq n(m-1)/2$, so we have $(I^{(m)})_t=(I^{(m-1)})_jI_{(n+1)/2}
\subseteq (I^{(m-1)}I)_t$.
\end{proof}

We now show that Conjectures \ref{fatptconj} through \ref{refinedChud} hold
in the case that $N=2$ for $n\geq5$ points on a smooth plane conic if $n$ is odd.

\ \newline\noindent Conjecture \ref{fatptconj}, $I^{(rN)}\subseteq M^{r(N-1)}I^r$, holds:
Suppose we show $I^{(2)}\subseteq MI$. Then we would have 
$I^{(2r)}=(I^{(2)})^r\subseteq (MI)^r=M^rI^r$, as required.
Thus it's enough now to show that $(I^{(2)})_t\subseteq M_iI_{t-i}$ 
for some $i$ for each $t\geq \alpha(I^{(2)})$.
But for $\alpha(I^{(2)})\leq t<\beta(I^{(2)})=n$, 
we know $C$ is a fixed component of
$(I^{(2)})_t$. Since $F\in M_2$ for the form $F$ defining $C$, we have
$(I^{(2)})_t=F(I_{t-2})\subseteq M_2I_{t-2}$, as needed.
For $t\geq n$, we have $M_1I_{t-1}=I_t$ by \cite[Lemma 2.2]{refBH}
since $t\geq n=\beta(I^{(2)})>\beta(I)$,
so $(I^{(2)})_t\subseteq I_t=M_1I_{t-1}$.

\ \newline\noindent Conjecture \ref{Essenconj}, $I^{(rN-(N-1))}\subseteq I^r$, holds:
Here we want to verify that $I^{(2r-1)}\subseteq I^r$, which is 
$I^{(2m+1)}\subseteq I^{m+1}$ if we take $r=m+1$.
But $I^{(2m+1)}=(I^{(2)})^mI\subseteq (MI)^{m}I=M^mI^{m+1}\subseteq I^{m+1}$,
as required.

\ \newline\noindent Conjecture \ref{p2conj}, that $m/r\ge 2\alpha(I)/(\alpha(I)+1)$
implies $I^{(m)}\subseteq I^r$, holds: To see this it is helpful to recall the resurgence,
$\rho(J)$ of a homogeneous ideal $(0)\neq J\subsetneq k[\pr N]$, defined to 
be the supremum of the ratios $m/r$ such that $I^{(m)}\not\subseteq I^r$. 
In the present situation, it is enough to have $\rho(I)<2\alpha(I)/(\alpha(I)+1)$, 
but by \cite[Theorem 3.4]{refBH}
we have $\rho(I)=(n+1)/n$. Since $\alpha(I)=2$ and $n\geq 5$ we have
$\rho(I)=(n+1)/n<4/3=2\alpha(I)/(\alpha(I)+1)$.

\ \newline\noindent Conjecture \ref{EvoEssenconj2}
follows from Conjecture \ref{EvoEssenconj}. For Conjecture \ref{EvoEssenconj}
we want to verify that $I^{(2r-1)}\subseteq M^{r-1}I^r$, which is 
just $I^{(2m+1)}\subseteq M^mI^{m+1}$ if we take $r=m+1$.
But $I^{(2m+1)}=(I^{(2)})^mI\subseteq (MI)^{m}I=M^mI^{m+1}$,
as required.

\ \newline\noindent Conjecture \ref{refinedChud}, $\frac{\alpha(I^{(m)})+N-1}{m+N-1}\leq \frac{\alpha(I^{(r)})}{r}$, holds:
Here we want to verify that $\frac{\alpha(I^{(m)})+1}{m+1}\leq \frac{\alpha(I^{(r)})}{r}$ 
for all $r\geq1$, but 
$$\frac{\alpha(I^{(m)})+1}{m+1}=\frac{2m+1}{m+1}\leq 2=\frac{2r}{r}=\frac{\alpha(I^{(r)})}{r}$$
is clear.

In case $N=2$, Conjecture \ref{refinedEV} and Conjecture \ref{refinedEV2} both
assert that $I^{(t(m+1))}\subseteq M^{t}(I^{(m)})^t$, while 
Conjecture \ref{ourconj} asserts $I^{(t(m+1)-1)}\subseteq M^{t-1}(I^{(m)})^t\subseteq (I^{(m)})^t$.
We note that the next result holds also for $n=1$ and $n=3$, but when $n=1$ we have a single point
and that case was dealt with in Section \ref{symisord}, while for $n=3$ the points comprise
a star configuration, which is dealt with in Section \ref{stars}.

\begin{cor}\label{char0cor} Let $N=2$ and let $n\geq 5$ be odd. Then 
Conjectures \ref{refinedEV}, \ref{refinedEV2} and \ref{ourconj} hold for the radical ideal of any
$n$ points on a smooth plane conic if $\operatorname{char}(K)=0$.
\end{cor}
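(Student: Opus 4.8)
The plan is to deduce the corollary directly from Proposition \ref{conj8prop}, whose two conclusions, specialized to $N=2$, are precisely the three conjectures in question. As noted just before the statement, when $N=2$ both Conjecture \ref{refinedEV} and Conjecture \ref{refinedEV2} assert $I^{(t(m+1))}\subseteq M^{t}(I^{(m)})^t$, while Conjecture \ref{ourconj} asserts $I^{(t(m+1)-1)}\subseteq M^{t-1}(I^{(m)})^t$, which in turn implies the weaker containment $I^{(t(m+1)-1)}\subseteq (I^{(m)})^t$. So it suffices to verify, for the ideal $I$ of $n\geq5$ odd points on a smooth plane conic in characteristic $0$, the three hypotheses needed to apply Proposition \ref{conj8prop} in full strength.

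The three hypotheses are checked as follows. First, $I^{(j+1)}\subseteq MI^{(j)}$ for all $j\geq1$: since $\operatorname{char}(K)=0$ and $I$ is the radical ideal of a finite set of points, this is exactly Lemma \ref{EulerLemma}. Second, $I^{(2j)}=(I^{(2)})^j$ for all $j\geq1$: this is Lemma \ref{evenlem} with $m=2$ and $r=j$. Third, $I^{(2j+1)}=(I^{(2)})^jI$ for all $j\geq0$: the case $j=0$ is the tautology $I^{(1)}=I$, and for $j\geq1$ it is Lemma \ref{oddlem}. Granting these, Proposition \ref{conj8prop} gives $I^{(t(m+1))}\subseteq M^t(I^{(m)})^t$ for all $t,m\geq1$, hence Conjectures \ref{refinedEV} and \ref{refinedEV2} for $N=2$; and it gives $I^{(t(m+1)-1)}\subseteq M^{t-1}(I^{(m)})^t$ for all $t\geq1$ both for even $m\geq2$ (second part of the proposition) and for odd $m\geq1$ (last part of the proposition, which is where the third hypothesis is used), covering all $m\geq1$ and so giving Conjecture \ref{ourconj} for $N=2$.

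There is no serious obstacle here: the substantive content has been isolated in Lemmas \ref{evenlem} and \ref{oddlem} (the facts that symbolic powers of $I$ in even, resp. odd, degree factor through $I^{(2)}$) and in Proposition \ref{conj8prop}. The only points requiring a little care in writing up are to record that the characteristic-$0$ hypothesis enters the argument exactly once, via Lemma \ref{EulerLemma}, and to spell out the matching between the $N=2$ forms of the three conjectures and the two conclusions of Proposition \ref{conj8prop}, including the observation that the $M^{t-1}$-strengthened containment already yields the bare containment $I^{(t(m+1)-1)}\subseteq (I^{(m)})^t$ demanded by the first half of Conjecture \ref{ourconj}.
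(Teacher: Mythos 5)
Your proof is correct and follows exactly the route the paper takes: verify the three hypotheses of Proposition \ref{conj8prop} via Lemma \ref{EulerLemma} (using characteristic $0$), Lemma \ref{evenlem}, and Lemma \ref{oddlem}, and then read off the three conjectures from the two conclusions of that proposition. Your write-up is a bit more explicit than the paper's one-line proof in spelling out the matching between the $N=2$ forms of the conjectures and the proposition's conclusions and in noting the trivial $j=0$ case, but the argument is the same.
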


\begin{proof}
Let $I$ be the radical ideal of the points. 
For $n\geq5$ odd we have $I^{(2j)}=(I^{(2)})^j$
by Lemma \ref{evenlem}, and we have $I^{(2r+1)}=(I^{(2)})^rI$
by Lemma \ref{oddlem}. The result now follows in characteristic 0
from Proposition \ref{conj8prop} by applying Lemma \ref{EulerLemma}.
\end{proof}

\section{Star configurations of points in projective space}\label{stars}

Let $I$ be the radical ideal of the configuration of $\binom{s}{N}$ points given by the 
pair-wise intersection of $s\geq N$ hyperplanes in $\pr N$,
assuming no $N+1$ of the hyperplanes meet at a point.

\ \newline\noindent Conjecture \ref{fatptconj}, that $I^{(rN)}\subseteq M^{r(N-1)}I^r$, holds by \cite[Corollary 3.9]{refHaHu}.

\ \newline\noindent Conjecture \ref{Essenconj}, that $I^{(rN-(N-1))}\subseteq I^r$, holds by \cite[Example 8.4.8]{refB. et al}.

\ \newline\noindent Conjecture \ref{p2conj}, that $I^{(m)}\subseteq I^r$ if $m/r\ge 2\alpha(I)/(\alpha(I)+1)$ and $N=2$,
holds (see the discussion after Conjecture 4.1.4 of \cite{refHaHu}).

\ \newline\noindent Conjecture \ref{EvoEssenconj}, that $I^{(rN-(N-1))}\subseteq M^{(r-1)(N-1)}I^r$, holds by
\cite[Corollary 4.1.7]{refHaHu}.

\ \newline\noindent Conjecture \ref{EvoEssenconj2}, that $\alpha(I^{(rN-(N-1))})\ge r\alpha(I)+(r-1)(N-1)$, 
follows from Conjecture \ref{EvoEssenconj}.

\ \newline\noindent Conjecture \ref{refinedChud}, that $\frac{\alpha(I^{(m)})+N-1}{m+N-1}\leq \frac{\alpha(I^{(r)})}{r}$, and
Conjecture \ref{refinedEV}, that $I^{(t(m+N-1))}\subseteq M^t(I^{(m)})^t$ for $N\geq 2$, follow from
Conjecture \ref{refinedEV2}, that $I^{(t(m+N-1))}\subseteq M^{t(N-1)}(I^{(m)})^t$. We will verify
Conjecture \ref{refinedEV2} in case $N=2$ when $K$ has characteristic 0. 
For $N=2$, we have $I^{(2m)}=(I^{(2)})^m$ for all $m\geq1$
by \cite[Corollary 3.9]{refHaHu}, and we have $I^{(m+1)}\subseteq MI^{(m)}$ by
Lemma \ref{EulerLemma}. Thus Conjecture \ref{refinedEV2} holds when $N=2$ and
$\operatorname{char}(K)=0$ by Proposition \ref{conj8prop}.
This same proof shows that
Conjecture \ref{ourconj}, that $I^{(t(m+N-1)-1)}\subseteq M^{(t-1)(N-1)}(I^{(m)})^t$, 
holds in case $N=2$ when $K$ has characteristic 0 and $m$ is even, since 
for a star configuration in $\pr2$ we have $I^{(2j)}=(I^{(2)})^j$
by \cite[Corollary 3.9]{refHaHu}. It also holds for $m$ odd by the same proof, except
to apply Proposition \ref{conj8prop} we must check that 
$I^{(2j+1)}=(I^{(2)})^jI$, which we now do. We always have $(I^{(2)})^jI\subseteq I^{(2j+1)}$
so we must check the reverse containment. The case $j=0$ is clear, so assume $j>0$.
By \cite[Lemma 2.4.2]{refBH2},
$s-1=\alpha(I)=\operatorname{reg}(I)$ and $\operatorname{reg}(I^m)\leq m\operatorname{reg}(I)$
by \cite[Theorem 1.1]{refGGP}.
Thus we have equality of the homogeneous components of the ideals $(I^{(m)})_t=(I^m)_t$
for every degree $t\geq m(s-1)$. In particular, $(I^{(2j+1)})_t=(I^{2j+1})_t\subseteq ((I^{(2)})^jI)_t$.
For $t<m(s-1)$, by B\'ezout's Theorem,
each of the $s$ lines is in the zero locus of each element of $(I^{(2j+1)})_t$; i.e.,
$(I^{(2j+1)})_t=F(I^{(2(j-1)+1)})_{t-s}$, where the form $F$
defines the $s$ lines. By induction we have
$F(I^{(2(j-1)+1)})_{t-s}\subseteq F(I^{(2(j-1))}I)_{t-s}\subseteq (I^{(2j)}I)_t=((I^{(2)})^jI)_t$.
Thus $(I^{(2j+1)})_t\subseteq ((I^{(2)})^jI)_t$ holds for all $t$, hence
$I^{(2j+1)}\subseteq(I^{(2)})^jI$.

\section{Points in a hyperplane}\label{HyperplaneSection} 

Suppose distinct points $P_1,\ldots,P_n\in \pr {N+1}$ lie in a hyperplane $H$ of $\pr{N+1}$.
We can regard $H$ as $\pr N$. There is now some ambiguity when using the notation $m_1P_1+\cdots+m_nP_n$,
since $m_1P_1+\cdots+m_nP_n$ could denote a fat point subscheme of $\pr N$ or of
$\pr{N+1}$, and these are not the same. For example, consider the point $P=(0,0,0,1)\in\pr3$.
Taking coordinates $K[\pr3]=K[x_0,\ldots,x_3]$, $P\in H$ where $H$ is defined as $x_0=0$.
We can identify $H$ with $\pr2$, where $K[\pr2]=K[x_1,\ldots,x_3]$.
If we use $mP$ to denote the fat point subscheme of $\pr3$, we have $I(mP)=(x_0,x_1,x_2)^m$, 
but, as a fat point subscheme of $\pr2$ which becomes a subscheme of $\pr3$ by
the inclusion $\pr2\subset\pr3$, $mP$ has ideal $I(mP)=(x_1,x_2)^m+(x_0)$.

To resolve the ambiguity, given $Z=m_1P_1+\cdots+m_nP_n$ for points $P_i\in H$,
we will use the notation $Z_{\pr N}$ and $Z_{\pr{N+1}}$ to indicate whether we are regarding
$Z$ as the fat point subscheme of $\pr N$ or of $\pr{N+1}$, respectively.

For explicitness (but without loss of generality) we assume 
$K[\pr{N+1}]=K[x_0,\ldots,x_{N+1}]$ and $H$ is defined by $x_0=0$, so
$K[\pr N]=K[H]=K[x_1,\ldots,x_{N+1}]$. 

Let $Z=P_1+\cdots+P_n$, let $I=I(Z_{\pr N})$, let $\widetilde{I}=IK[\pr{N+1}]$
be the extension and let $\widehat{I}=I(Z_{\pr {N+1}})$. Also,
let $M=(x_1,\ldots,x_{N+1})\subset K[\pr N]$,
let $\widetilde{M}=MK[\pr{N+1}]$ and let $\widehat{M}=(x_0,\ldots,x_{N+1})\subset K[\pr{N+1}]$.

\begin{prop}\label{hypprop} Let $I, \widetilde{I},\widehat{I},M,\widetilde{M}$ and $\widehat{M}$ 
be as above. Moreover, for parts (a) and (c), assume $K$ has characteristic 0.
\begin{itemize}
\item[(a)] If Conjecture \ref{fatptconj} holds for $I$, then it holds for $\widehat{I}$; i.e., 
if $I^{(rN)}\subseteq M^{r(N-1)}I^r$ holds, then so does 
$\widehat{I}^{(r(N+1))}\subseteq \widehat{M}^{rN}\widehat{I}^r$.
\item[(b)] If Conjecture \ref{Essenconj} holds for $I$, then it holds for $\widehat{I}$; in fact,
if $I^{(rN-(N-1))}\subseteq I^r$ holds for all $r\geq 1$, then so does 
$\widehat{I}^{(r(N+1)-N)}\subseteq \widehat{I}^{(rN-(N-1))}\subseteq \widehat{I}^r$.
\item[(c)] If Conjecture \ref{EvoEssenconj} holds for $I$, then it holds for $\widehat{I}$; i.e.,
if $I^{(rN-(N-1))}\subseteq M^{(r-1)(N-1)}I^r$ holds, then so does 
$\widehat{I}^{(r(N+1)-N)}\subseteq \widehat{M}^{(r-1)N}\widehat{I}^r$.
\end{itemize} 
\end{prop}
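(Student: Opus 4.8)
The key structural fact to establish first is the relationship between symbolic powers of $\widehat I$ (the ideal of the points viewed in $\pr{N+1}$) and those of $I$ (the ideal viewed in $\pr N=H$). The scheme $Z_{\pr{N+1}}$ differs from $(Z_{\pr N})_{\pr{N+1}}$ precisely by the hyperplane $H=\{x_0=0\}$: a form vanishes to order $m$ at $mP_i$ as a subscheme of $\pr{N+1}$ iff, after subtracting off the maximal power of $x_0$ dividing it, the quotient vanishes to the appropriate order in $H$. Concretely, the plan is to prove a decomposition of the form
\[
\widehat I^{(m)} \;=\; \sum_{k\ge 0} x_0^{\,k}\, \widetilde{I^{(m-k)}}
\]
(with the convention $I^{(j)}=(1)$ for $j\le 0$), or at least the inclusions needed in each direction for the three parts. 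This should follow from a B\'ezout/restriction argument: restricting a degree-$d$ form $F\in\widehat I^{(m)}$ to $H$ lands in $I^{(m)}$ (in $K[H]$, since the point structure in $H$ is exactly the trace), and $x_0\mid (F - \widetilde{F|_H})$, so one can induct on the $x_0$-degree. I would record this as a preliminary lemma, since all three parts (a), (b), (c) rest on it. I expect this lemma to be the main obstacle — one must be careful that the passage to symbolic powers commutes with extension $I\mapsto\widetilde I$ and with the $x_0$-filtration, and characteristic 0 enters here (or in the cited Proposition 4.2.3 of \cite{refBo}) to control the fat-point structure via derivations.

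For part (b), which needs no characteristic hypothesis, the first inclusion $\widehat I^{(r(N+1)-N)}\subseteq \widehat I^{(rN-(N-1))}$ is immediate since $r(N+1)-N = rN-(N-1)+ (r-1)\ge rN-(N-1)$ and symbolic powers are nested. For the second inclusion $\widehat I^{(rN-(N-1))}\subseteq\widehat I^r$: using the decomposition lemma, write an element of $\widehat I^{(rN-(N-1))}$ as $\sum_k x_0^k G_k$ with $G_k\in\widetilde{I^{(rN-(N-1)-k)}}$. For $k\ge$ some threshold the exponent $rN-(N-1)-k$ drops low enough that $I^{(rN-(N-1)-k)}\subseteq I^{r'}$ for a suitable $r'$ by the hypothesis applied with a smaller $r$; combined with the $x_0^k$ factor (which contributes to powers of $\widehat M$, hence of $\widehat I$ after absorbing, or directly to $\widehat I$ via a B\'ezout argument since $H$ itself is cut out by a linear form in $\widehat I$ once... ) — more precisely, $x_0\in\widehat M$ but also each point lies in $H$, so one must track that $x_0$ times the ideal of $Z_{\pr{N+1}}$ already behaves well. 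The cleanest route: show $x_0^{N-1}\widetilde{I^{r}}\subseteq\widehat I^{r}$ and more generally $x_0^{j(N-1)}\widetilde{I^{(m)}}^{\,?}$-type containments, reducing part (b) to the hypothesis on $I$ by matching exponents; the arithmetic identity $r(N+1)-N = (rN-(N-1)) + (r-1)$ is what makes the bookkeeping close.

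For parts (a) and (c), the same template applies but now keeping track of the power of $\widehat M$. One uses the hypothesis in the form $I^{(\cdot)}\subseteq \widehat M_{\text{(in }H)}^{\,?}\,I^{\cdot}$, extends to $K[\pr{N+1}]$, and supplies the extra factors of $x_0$ coming from the $x_0$-filtration to make up the difference between the exponent $r(N-1)$ (or $(r-1)(N-1)$) available in $H$ and the exponent $rN$ (or $(r-1)N$) required in $\pr{N+1}$ — i.e., each ``extra'' unit in the $\widehat M$-exponent is paid for by one factor of $x_0$, of which the $x_0$-filtration of $\widehat I^{(r(N+1)-N)}$ (or $\widehat I^{(r(N+1))}$) supplies exactly enough because the symbolic exponent in $\pr{N+1}$ exceeds that in $\pr N$ by a matching amount. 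The role of characteristic 0 in (a) and (c) is, again, only to guarantee the decomposition lemma and to allow applying Lemma \ref{EulerLemma}-type statements if needed; I would isolate that dependence in the preliminary lemma and then the exponent-matching in (a), (b), (c) becomes a uniform, purely combinatorial verification.
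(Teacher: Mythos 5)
Your plan hits the paper's key idea exactly: the proof rests on the $x_0$-filtration decomposition of symbolic powers, $\widehat I^{(m)}=(x_0^m)+x_0^{m-1}\widetilde I+\cdots+x_0\widetilde I^{(m-1)}+\widetilde I^{(m)}$, followed by a term-by-term exponent-matching argument against the analogous expansion of the target ideal. That is precisely how the paper proceeds. Two corrections to your self-assessment, though. First, the decomposition you flag as ``the main obstacle'' is not something you would need to prove: it is a known result of Fatabbi--Harbourne--Lorenzini (\cite{refFHL}), and it holds in \emph{any} characteristic, so your worry that characteristic $0$ enters there is misplaced. Second, the characteristic-$0$ hypothesis is used only via Lemma \ref{EulerLemma} ($I^{(j+1)}\subseteq MI^{(j)}$), and only in parts (a) and (c): in those parts the symbolic-power exponent $j$ in a typical term $x_0^{\,\cdot}\,\widetilde I^{(j)}$ of the decomposition can exceed the exponent to which the hypothesis on $I$ applies, and Lemma \ref{EulerLemma} is what trades the excess for factors of $M$. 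Part (b) needs no such trade-off and so needs no characteristic assumption, exactly as stated. Finally, the step you hedge on in (b) --- ``absorbing'' the $x_0^k$ factor --- is cleaner than your sketch: since all the points lie on $H=\{x_0=0\}$, one has $x_0\in\widehat I$ outright, so $x_0^{r-j}\,\widetilde I^{\,j}\subseteq\widehat I^{\,r}$ immediately; no B\'ezout argument or $x_0^{N-1}$-type auxiliary containment is needed. With those adjustments, your exponent-matching would reproduce the paper's argument: in (b) one reduces to $I^{((r-1)(N-1)+j)}\subseteq I^j$ for $1\le j\le r$, which follows from the hypothesis because $(r-1)(N-1)+j\ge Nj-(N-1)$ in that range; in (a) and (c) the same bookkeeping, plus Lemma \ref{EulerLemma} and the inclusion $\widetilde I^{(t)}\subseteq\widetilde M^{t}$ (from $I^{(t)}\subseteq M^t$ and flatness of the extension), handles the $\widehat M$-factor.
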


\begin{proof}(a) By \cite{refFHL}, $\widehat{I}^{(m)}=(x_0^m)+x_0^{m-1}\widetilde{I}+\cdots 
+x_0\widetilde{I}^{(m-1)}+ \widetilde{I}^{(m)}$ for any $m\geq 1$. Thus
$\widehat{I}=(x_0)+\widetilde{I}$, so 
$\widehat{M}^{rN}\widehat{I}^r$ is equal to $((x_0)+\widetilde{M})^{rN}((x_0)+\widetilde{I})^r$,
which expands to 
$$((x_0)^{rN}+x_0^{rN-1}\widetilde{M}+\cdots+x_0\widetilde{M}^{rN-1}+
\widetilde{M}^{rN})((x_0)^r+x_0^{r-1}\widetilde{I}+\cdots+x_0\widetilde{I}^{r-1}
+\widetilde{I}^r).$$
Multiplying this out and combining terms with equal powers of $x_0$ gives
\[
\begin{split}
&(x_0)^{(N+1)r}+\\
&(x_0)^{(N+1)r-1}(\widetilde{M}+\widetilde{I})+\\
&(x_0)^{(N+1)r-2}(\widetilde{M}^2+\widetilde{M}\widetilde{I}+\widetilde{I}^2)+\\
&\cdots +\\
&(x_0)^{(N+1)r-r}(\widetilde{M}^r+\widetilde{M}^{r-1}\widetilde{I}+\cdots+\widetilde{I}^r)+\\
&(x_0)^{rN-1}(\widetilde{M}^{r+1}+\widetilde{M}^r\widetilde{I}+\cdots+\widetilde{M}\widetilde{I}^r)+\\
&(x_0)^{rN-2}(\widetilde{M}^{r+2}+\widetilde{M}^{r+1}\widetilde{I}+\cdots+\widetilde{M}^2\widetilde{I}^r)+\\
&\cdots+\\
&(x_0)^{rN-(rN-r)}(\widetilde{M}^{r+rN-r}+\widetilde{M}^{r+rN-r-1}\widetilde{I}+\cdots+\widetilde{M}^{r+rN-2r}\widetilde{I}^r)+\\
&(x_0)^{r-1}(\widetilde{M}^{rN}\widetilde{I}+\widetilde{M}^{rN-1}\widetilde{I}^2+\cdots+\widetilde{M}^{rN-(r-1)}\widetilde{I}^r)+\\
&\cdots+\\
&(x_0)(\widetilde{M}^{rN}\widetilde{I}^{r-1}+\widetilde{M}^{rN-1}\widetilde{I}^r)+\\
&\widetilde{M}^{rN}\widetilde{I}^r.
\end{split}
\]
Since $\widetilde{I}\subseteq \widetilde{M}$, the first term in each row of the displayed sum of ideals above
contains the other terms in that row; e.g., in row 3 we have 
$$\widetilde{I}^2\subseteq \widetilde{M}\widetilde{I}\subseteq\widetilde{M}^2.$$
Thus we get
$$\widehat{M}^{rN}\widehat{I}^r=(x_0)^{(N+1)r}+(x_0)^{(N+1)r-1}\widetilde{M}+\cdots+
(x_0)^r\widetilde{M}^{rN}+(x_0)^{r-1}\widetilde{M}^{rN}\widetilde{I}
+\cdots+x_0\widetilde{M}^{rN}\widetilde{I}^{r-1}+\widetilde{M}^{rN}\widetilde{I}^r.$$
Since
$$\widehat{I}^{(r(N+1))}=(x_0^{(N+1)r})+x_0^{(N+1)r-1}\widetilde{I}+\cdots 
+x_0\widetilde{I}^{((N+1)r-1)}+ \widetilde{I}^{((N+1)r)},$$
to show $\widehat{I}^{(r(N+1))}\subseteq\widehat{M}^{rN}\widehat{I}^r$ it suffices to show
that $x_0^{(N+1)r-j}\widetilde{I}^{(j)}\subseteq x_0^{(N+1)r-j}\widetilde{M}^j$ for $j=0,\ldots,Nr$,
and that $x_0^{(N+1)r-j}\widetilde{I}^{(j)}\subseteq x_0^{(N+1)r-j}\widetilde{M}^{Nr}\widetilde{I}^{j-Nr}$ 
for $j=Nr+1,\ldots,(N+1)r$.

However, $\widetilde{I}\subseteq \widetilde{M}$, 
so $\widetilde{I}^t\subseteq \widetilde{M}^t$ for all $t>0$.
Since adding a variable to a polynomial ring gives a flat extension,
primary decompositions of ideals in $K[\pr N]$ extend 
to primary decompositions in $K[\pr{N+1}]$ (see \cite{refM}, Theorem 13, or Exercise 7, \cite{refAM}).
Thus saturating with respect to $\widetilde{M}$ gives $\widetilde{I}^{(t)}\subseteq \widetilde{M}^{(t)}$,
but $\widetilde{M}^t$ is $\widetilde{M}$-primary so saturation has no effect;
i.e., $\widetilde{M}^{(t)}=\widetilde{M}^t$. Thus $\widetilde{I}^{(t)}\subseteq \widetilde{M}^t$, 
which shows $x_0^{(N+1)r-j}\widetilde{I}^{(j)}\subseteq x_0^{(N+1)r-j}\widetilde{M}^j$.  
Now we want to show
$$x_0^{(N+1)r-j}\widetilde{I}^{(j)}\subseteq x_0^{(N+1)r-j}\widetilde{M}^{Nr}\widetilde{I}^{j-Nr}.$$
Given that $j>Nr$, by $j-Nr$ applications of Lemma \ref{EulerLemma}, we have 
$I^{(j)}\subseteq M^{j-Nr}I^{(Nr)}$ and hence
$$x_0^{(N+1)r-j}\widetilde{I}^{(j)}\subseteq x_0^{(N+1)r-j}\widetilde{M}^{j-Nr}\widetilde{I}^{(Nr)}.$$
(Here we use the obvious fact that an extension of a product is the product of the extensions,
i.e., $(M^{j-Nr}I^{(Nr)})\widetilde{\ }=\widetilde{M}^{j-Nr}\widetilde{I^{(Nr)}}$,
and the less obvious fact that in this situation the extension of the symbolic 
power is the symbolic power of the extension; i.e., $\widetilde{I^{(Nr)}}=\widetilde{I}^{(Nr)}$. 
The latter is true because of the fact quoted above about the preservation of
primary decompositions for the extension $K[\pr N]\subset K[\pr{N+1}]$.)
Thus $\widetilde{I}^{(Nr)}\subseteq \widetilde{M}^{r(N-1)}\widetilde{I}^r$ since
by assumption we have $I^{(Nr)}\subseteq M^{r(N-1)}I^r$, hence we obtain
$$x_0^{(N+1)r-j}\widetilde{I}^{(j)}\subseteq 
x_0^{(N+1)r-j}\widetilde{M}^{j-Nr}\widetilde{M}^{(N-1)r}\widetilde{I}^{r}
\subseteq x_0^{(N+1)r-j}\widetilde{M}^{Nr}\widetilde{I}^{j-Nr},$$
where the last inclusion comes from the fact that
$j\leq (N+1)r$, hence $j-Nr\leq r$. But
we can convert $r-(j-Nr)$ of the factors of $\widetilde{I}^r$ to $\widetilde{M}$,
giving $\widetilde{I}^{r}\subseteq \widetilde{M}^{r-(j-Nr)}\widetilde{I}^{j-Nr}$.

(b) Since $r(N+1)-N\geq rN-(N-1)$, we have 
$\widehat{I}^{(r(N+1)-N)}\subseteq \widehat{I}^{(rN-(N-1))}$. Thus it is enough to show
$\widehat{I}^{(rN-(N-1))}\subseteq \widehat{I}^r$.
As in (a), for $m=rN-N+1$, we have
\[
\begin{split}
\widehat{I}^{(m)}&=(x_0^m)+x_0^{m-1}\widetilde{I}+\cdots +x_0\widetilde{I}^{(m-1)}+ \widetilde{I}^{(m)}\\
&= x_0^r((x_0^{m-r})+\cdots+\widetilde{I}^{(m-r)}) + x_0^{r-1}\widetilde{I}^{(m-r+1)}+\cdots+\widetilde{I}^{(m)}\\
&=x_0^r((x_0^{(r-1)(N-1)})+\cdots+\widetilde{I}^{((r-1)(N-1))}) + x_0^{r-1}\widetilde{I}^{((r-1)(N-1)+1)}+\cdots+\widetilde{I}^{(rN-N+1)}
\end{split}
\]
and
$\widehat{I}^r=(x_0)^r+x_0^{r-1}\widetilde{I}+\cdots+x_0\widetilde{I}^{r-1}
+\widetilde{I}^r$. Thus it suffices to show that 
$x_0^{r-j}\widetilde{I}^{((r-1)(N-1)+j)}\subseteq x_0^{r-j}\widetilde{I}^j$ for $1\leq j\leq r$.
But $x_0^{r-j}\widetilde{I}^{((r-1)(N-1)+j)}\subseteq x_0^{r-j}\widetilde{I}^j$
holds if $\widetilde{I}^{((r-1)(N-1)+j)}\subseteq \widetilde{I}^j$ does, which holds
if $I^{((r-1)(N-1)+j)}\subseteq I^j$ does.
Thus we are done, since the latter does hold: $I^{(Nj-(N-1))}\subseteq I^j$ holds by assumption, and
$I^{((r-1)(N-1)+j)}\subseteq I^{(Nj-(N-1))}$ holds since $(r-1)(N-1)+j\geq Nj-(N-1)$ for $1\leq j\leq r$.

(c) The proof for this part is essentially the same as for part (a) and so is left to the reader (but
it is included explicitly in the arXiv posting of this paper).

\end{proof}

As a corollary of the preceding result, we have the following theorem.

\begin{thm}\label{hypcor1} Assume $K$ has characteristic 0.
Let $Z=P_1+\cdots +P_n$ be a star configuration of points in $\pr N$, where we regard
$\pr N$ as a linear subspace of a larger dimensional projective space $\pr d$.
Let $I=I(Z_{\pr d})$. Then for all $r\geq1$ we have:
\begin{itemize}
\item[(a)] $I^{(rd)}\subseteq M^{r(d-1)}I^r$ and
\item[(b)] $I^{(rd-(d-1))}\subseteq M^{(r-1)(d-1)}I^r$.
\end{itemize} 
\end{thm}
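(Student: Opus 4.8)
The plan is to prove (a) and (b) simultaneously by induction on $d-N$, feeding Proposition \ref{hypprop} into itself one dimension at a time. Throughout, $M$ denotes the irrelevant ideal of whichever ring $K[\pr k]$ is currently in play.

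For the base case $d=N$ there is nothing to do: $I=I(Z_{\pr N})$ is the ideal of the star configuration itself, and the two claimed containments $I^{(rN)}\subseteq M^{r(N-1)}I^r$ and $I^{(rN-(N-1))}\subseteq M^{(r-1)(N-1)}I^r$ are exactly the statements that Conjectures \ref{fatptconj} and \ref{EvoEssenconj} hold for a star configuration in $\pr N$; these are recorded in Section \ref{stars}, via \cite[Corollary 3.9]{refHaHu} and \cite[Corollary 4.1.7]{refHaHu} respectively.

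For the inductive step, assume $d>N$ and that the theorem holds when $\pr N$ is viewed as a linear subspace of $\pr{d-1}$. Put $J=I(Z_{\pr{d-1}})$; the inductive hypothesis then says $J^{(r(d-1))}\subseteq M^{r(d-2)}J^r$ and $J^{(r(d-1)-(d-2))}\subseteq M^{(r-1)(d-2)}J^r$ for all $r\geq 1$, that is, Conjectures \ref{fatptconj} and \ref{EvoEssenconj} hold for $J$ with ambient dimension $d-1$. Now regard $\pr{d-1}$ as a hyperplane of $\pr d$, so that in the notation of Proposition \ref{hypprop} the subscheme ``in $H$'' is $Z_{\pr{d-1}}$, the ideal called $I$ there is our $J$, and the ideal called $\widehat I$ there is $I(Z_{\pr d})$, the $I$ of the present theorem. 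Since $K$ has characteristic $0$, parts (a) and (c) of Proposition \ref{hypprop} apply and deliver precisely $I^{(rd)}\subseteq M^{r(d-1)}I^r$ and $I^{(rd-(d-1))}\subseteq M^{(r-1)(d-1)}I^r$, completing the induction.

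The only points that need care, and that I would state explicitly, are bookkeeping. First, the conclusion of Proposition \ref{hypprop}(a) for a configuration contained in $\pr{k-1}\subset\pr k$ is, word for word, the hypothesis ``Conjecture \ref{fatptconj} holds'' needed to apply the proposition at the next stage $\pr k\subset\pr{k+1}$, and likewise for part (c) and Conjecture \ref{EvoEssenconj}; so the two statements propagate up the tower $\pr N\subset\pr{N+1}\subset\cdots\subset\pr d$ and the induction genuinely closes. Second, the intermediate subschemes $Z_{\pr k}$ with $N<k<d$ are no longer star configurations, merely finite sets of points lying in a hyperplane, but Proposition \ref{hypprop} never asks for more than that, so this causes no difficulty. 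In short there is no serious obstacle: the real content is already contained in Proposition \ref{hypprop} together with the known validity of Conjectures \ref{fatptconj} and \ref{EvoEssenconj} for star configurations, and the characteristic-$0$ hypothesis is used only to make parts (a) and (c) of Proposition \ref{hypprop} available (hence, ultimately, to invoke Lemma \ref{EulerLemma}).
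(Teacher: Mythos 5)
Your proof is correct and follows the same route as the paper: both reduce the theorem to Proposition \ref{hypprop} together with the known validity of Conjectures \ref{fatptconj} and \ref{EvoEssenconj} for star configurations in $\pr N$. The paper simply invokes Proposition \ref{hypprop} without spelling out the induction up the chain $\pr N\subset\pr{N+1}\subset\cdots\subset\pr d$; your version makes that iteration explicit and correctly notes why it closes (the conclusion of each application of Proposition \ref{hypprop} is exactly the hypothesis needed for the next, and the intermediate subschemes need only be points in a hyperplane, not star configurations).
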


\begin{proof}
(a) As noted in Section \ref{stars},
Conjecture \ref{fatptconj} holds for star configurations of points, hence
the result follows by Proposition \ref{hypprop}(a).

(b) Also as noted in Section \ref{stars},
Conjecture \ref{EvoEssenconj} holds for star configurations of points, hence
the result follows by Proposition \ref{hypprop}(c).
\end{proof}

As another corollary we have the following result. Part (a) was proved in
\cite{refDu} using different methods.

\begin{thm}\label{hypcor2} Assume $K$ has characteristic 0.
Let $Z=P_1+\cdots +P_n$ for $n\leq d+1$ general points of $\pr d$.
Let $I=I(Z)$. Then for all $r\geq1$ we have:
\begin{itemize}
\item[(a)] $I^{(rd)}\subseteq M^{r(d-1)}I^r$ and
\item[(b)] $I^{(rd-(d-1))}\subseteq M^{(r-1)(d-1)}I^r$.
\end{itemize} 
\end{thm}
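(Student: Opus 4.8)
The plan is to deduce both containments from Theorem~\ref{hypcor1}, after observing that $n\le d+1$ general points of $\pr d$ always lie on a linear subspace in which they form a star configuration.

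First I would use generality to identify the linear span of the points. Since $n\le d+1$ and the $P_i$ are general, they are in linearly general position, hence linearly independent, and so span a unique linear subspace $L\subseteq\pr d$ with $L\cong\pr{n-1}$. Regarding $Z=P_1+\cdots+P_n$ as a (reduced) subscheme of $L\cong\pr{n-1}$, the $P_i$ are $n$ linearly independent points of $\pr{n-1}$, so some projective change of coordinates of $L$ carries them to the $n$ coordinate points of $\pr{n-1}$. Those coordinate points are precisely the $\binom{n}{n-1}=n$ points cut out by the $n$ coordinate hyperplanes of $\pr{n-1}$, no $n$ of which meet at a point; hence they form a star configuration of points in $\pr{n-1}$, and since being a star configuration is invariant under change of coordinates, so is $Z$ viewed in $L$.

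Next I would apply Theorem~\ref{hypcor1}. If $n\le d$, then $L\cong\pr{n-1}$ is a proper linear subspace of $\pr d$, and $Z$ sits inside it as a star configuration; taking $N=n-1$ in Theorem~\ref{hypcor1} and using $I=I(Z)=I(Z_{\pr d})$ yields $I^{(rd)}\subseteq M^{r(d-1)}I^r$ and $I^{(rd-(d-1))}\subseteq M^{(r-1)(d-1)}I^r$ for all $r\ge1$, which is exactly the claim. For the boundary case $n=d+1$ we have $L=\pr d$, so $Z$ is itself a star configuration of $d+1$ general points of $\pr d$; then the two containments are Conjectures~\ref{fatptconj} and~\ref{EvoEssenconj} for $Z$, both of which were verified for star configurations in Section~\ref{stars}. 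For $n=1$, $Z$ is a single point, whose ideal is generated by $d$ linear forms and is therefore a complete intersection with $I^{(m)}=I^m$ for all $m$, so the two containments hold by Section~\ref{symisord}.

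I do not expect a real obstacle; all the content is in the reduction. The one place needing care is the change-of-coordinates step identifying $n$ linearly independent points of $\pr{n-1}$ with a star configuration, together with the bookkeeping that routes the extremal case $n=d+1$ through the star configuration results of Section~\ref{stars} rather than through Theorem~\ref{hypcor1}, whose statement presumes the ambient space is strictly larger than the span of the configuration.
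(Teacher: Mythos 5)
Your proof is correct and follows essentially the same route as the paper: recognize that $n\le d+1$ general points are linearly independent, hence form a star configuration in their linear span $\pr{n-1}$, and then apply Theorem \ref{hypcor1}. The paper's own proof is a single sentence and silently subsumes the boundary cases; your explicit routing of $n=d+1$ through the Section \ref{stars} results directly (since then the span is all of $\pr d$ and Proposition \ref{hypprop} contributes nothing) and of $n=1$ through the complete-intersection case is careful bookkeeping rather than a different argument.
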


\begin{proof}
Just note that $Z$ is a star configuration of points in $\pr{n-1}$ and apply
Theorem \ref{hypcor1}.
\end{proof}

\section{General points in the plane}\label{general}

Our focus here is for general points in the plane.
(Apart from \cite{refDu}, which shows that Conjecture \ref{fatptconj}
holds for finite sets of general points in $\pr3$, little so far
is known for general points in $\pr{N}$ for $N>2$.)

Let $I$ be the radical ideal of $n$ general points in $\pr 2$.
If $n$ is 1, 2 or 4, then Conjectures \ref{fatptconj} through \ref{ourconj} 
hold since the points comprise a complete intersection.
If $n=3$ or 5, Conjectures \ref{fatptconj} through \ref{ourconj}
hold since the points comprise a star configuration (in case $n=3$)
or lie on a smooth conic (if $n=5$), although
for Conjectures \ref{refinedChud}, \ref{refinedEV}, \ref{refinedEV2} and \ref{ourconj}
our proof above for both $n=3$ and 5 assumes that the characteristic is 0.

So now assume that $n\geq 6$ and $N=2$.

\ \newline\noindent Conjecture \ref{fatptconj}, that $I^{(rN)}\subseteq M^{r(N-1)}I^r$, holds by \cite[Corollary 3.10]{refHaHu}.

\ \newline\noindent Conjecture \ref{Essenconj}, that $I^{(rN-(N-1))}\subseteq I^r$, holds by \cite[Remark 4.3]{refBH2}, since $(rN-(N-1))/r=2-(1/r)\geq 3/2$ except in the trivial case that $r=1$.

\ \newline\noindent Conjecture \ref{p2conj}, that $I^{(m)}\subseteq I^r$ if $m/r\ge 2\alpha(I)/(\alpha(I)+1)$,
holds (see the discussion after Conjecture 4.1.4 of \cite{refHaHu}).

\ \newline\noindent Conjecture \ref{EvoEssenconj}, that $I^{(rN-(N-1))}\subseteq M^{(r-1)(N-1)}I^r$, holds by
\cite[Corollary 4.1.13]{refHaHu} but in some cases the proof assumes $\operatorname{char}(K)=0$.

\ \newline\noindent Conjecture \ref{EvoEssenconj2}, that $\alpha(I^{(rN-(N-1))})\ge r\alpha(I)+(r-1)(N-1)$, 
follows from Conjecture \ref{EvoEssenconj}.

\ \newline\noindent Conjecture \ref{refinedChud}, that $\frac{\alpha(I^{(m)})+N-1}{m+N-1}\leq \frac{\alpha(I^{(r)})}{r}$, and
Conjecture \ref{refinedEV}, that $I^{(t(m+N-1))}\subseteq M^t(I^{(m)})^t$, follow from
Conjecture \ref{refinedEV2}, that $I^{(t(m+N-1))}\subseteq M^{t(N-1)}(I^{(m)})^t$. 
We now consider some special cases of the latter, and of
Conjecture \ref{ourconj}, that  $I^{(t(m+N-1)-N+1)}\subseteq (I^{(m)})^t$
and $I^{(t(m+N-1)-N+1)}\subseteq M^{(t-1)(N-1)}(I^{(m)})^t$,
for $N=2$. 

\begin{prop}\label{squareprop}
Let $m,t\geq1$ and
let $I$ be the radical ideal of $n$ general points in $\pr2$ where $n = s^2$ for $s\geq 3$.  
Then $I^{(t(m+1))}\subseteq M^t(I^{(m)})^t$, $I^{(t(m+1)-1)}\subseteq (I^{(m)})^t$
and $I^{(t(m+1)-1)}\subseteq M^{t-1}(I^{(m)})^t$.
\end{prop}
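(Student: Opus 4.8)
The plan is to deduce all three containments from Proposition \ref{conj8prop}, applied to $I$, the radical ideal of the $n=s^2$ general points of $\pr2$. That proposition needs three inputs: (i) $I^{(j+1)}\subseteq MI^{(j)}$ for every $j\geq1$; (ii) $I^{(2j)}=(I^{(2)})^j$ for every $j\geq1$; and (iii) $I^{(2j+1)}=(I^{(2)})^jI$ for every $j\geq0$. Granting these, the first part of Proposition \ref{conj8prop} gives $I^{(t(m+1))}\subseteq M^t(I^{(m)})^t$ for all $t,m\geq1$; its second part gives $I^{(t(m+1)-1)}\subseteq M^{t-1}(I^{(m)})^t$ for all $t$ and even $m$; and the third part (using (iii)) supplies the same containment for odd $m$. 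The middle inclusion $I^{(t(m+1)-1)}\subseteq(I^{(m)})^t$ is then immediate from $M^{t-1}(I^{(m)})^t\subseteq(I^{(m)})^t$. For (i) I would simply cite Lemma \ref{EulerLemma}; this is the one place characteristic enters, so as stated the argument requires $\operatorname{char}(K)=0$ (in other characteristics one would have to establish (i) by a direct B\'ezout-type argument, as is done for points on a conic).

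Thus the real content is (ii) and (iii). In both, the product-side inclusions $(I^{(2)})^j\subseteq I^{(2j)}$ and $(I^{(2)})^jI\subseteq I^{(2j+1)}$ are automatic, and only the reverse containments are at issue. For (ii) I would mimic Lemma \ref{evenlem}: compute $\alpha(I^{(2j)})$ and $\beta(I^{(2j)})$ for $s^2$ general points and feed them into the numerical criterion of \cite[Proposition 3.5]{refHaHu}, which upgrades the right coincidence among these invariants (together with the degree/length of the fat point scheme $2jZ$) to the equality $I^{(2j)}=(I^{(2)})^j$. For (iii) I would mimic Lemma \ref{oddlem}: working degree by degree, write $I^{(2j+1)}=I^{(2j)}\cdot I$ by invoking a ``multiplication of fixed-component-free linear systems'' statement in the spirit of \cite[Proposition 2.4]{refBH}, with the B\'ezout/regularity bookkeeping needed to reduce to degrees where it applies. (An alternative route avoids Proposition \ref{conj8prop} altogether and attacks the three containments directly via Lemma \ref{genlemma} together with Lemma \ref{postcrit2}, but that requires lower bounds of the shape $\alpha(I^{(k)})\geq t\operatorname{reg}(I^{(m)})+c$, which ultimately demand the same information and are in fact harder to meet because $\operatorname{reg}(I^{(m)})$ need not equal $\alpha(I^{(m)})$ here.)

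The main obstacle -- and the only place the hypothesis $n=s^2$ is genuinely used -- is obtaining sufficiently sharp control of the dimensions of the linear systems $|dL-mZ|$ of plane curves of degree $d$ with an assigned point of multiplicity $m$ at each of the $n$ general points, equivalently of the Hilbert functions (hence $\alpha$, $\beta$, $\operatorname{reg}$) of the symbolic powers $I^{(m)}$. Because $n=s^2$ is a perfect square, Nagata's theorem pins down the Waldschmidt constant $\widehat\alpha(I)=s$ and forces these invariants into the narrow ranges that make the criteria above applicable; combined with the known cases of the Harbourne--Hirschowitz non-speciality conjecture for systems of nonnegative virtual dimension -- which is all that is needed -- this should yield the exact values required. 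Checking that these ranges are in fact tight enough in every degree and for every $j$ is the technical heart of the argument, and is precisely why the statement is restricted to $n$ a perfect square rather than a general $n\geq6$.
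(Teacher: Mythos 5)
Your proof takes a genuinely different route from the paper's, and its key step does not go through. The paper establishes all three containments purely numerically from Lemma~\ref{genlemma}, with no characteristic assumption: for $n=9$ it uses $\alpha(I^{(r)})=3r$ and $\operatorname{reg}(I^{(r)})=3r+1$, and for $n=s^2>9$ it uses $\alpha(I^{(r)})>rs$ (Nagata) together with $\operatorname{reg}(I^{(r)})\leq s(r+1/2)$; the hypothesis of Lemma~\ref{genlemma}(a) then reduces to $s\geq 2$, which certainly holds. (For the middle inclusion $I^{(t(m+1)-1)}\subseteq(I^{(m)})^t$ one compares $\alpha(I^{(t(m+1)-1)})$ against $\operatorname{reg}((I^{(m)})^t)\leq t\operatorname{reg}(I^{(m)})$ and then feeds this into Lemma~\ref{genlemma}(b).) You dismiss this route as ``harder to meet'' because $\operatorname{reg}(I^{(m)})\neq\alpha(I^{(m)})$ here, but the published $\operatorname{reg}$-bound for $s^2$ general points is exactly sharp enough, and this is the route the paper takes.

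Your proposed deduction via Proposition~\ref{conj8prop} needs $I^{(2j)}=(I^{(2)})^j$ and $I^{(2j+1)}=(I^{(2)})^jI$, which you leave to the $\alpha\cdot\beta$-equality criterion of \cite[Proposition~3.5]{refHaHu}, as in Lemma~\ref{evenlem}. That criterion fails for $s^2$ general points. For $n=9$ the required equality would force $\beta(I^{(2j)})=6j$, but $(I^{(2j)})_{6j}$ is one-dimensional (spanned by $C^{2j}$ for the unique cubic $C$ through the nine points), so $\beta(I^{(2j)})>6j$ and $\alpha(I^{(2j)})\beta(I^{(2j)})>(2j)^2\cdot 9$. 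For $n=s^2\geq 16$, Nagata gives $\alpha(I^{(2j)})>2js$, hence $\alpha(I^{(2j)})\beta(I^{(2j)})\geq\alpha(I^{(2j)})^2>(2j)^2 n$. So the inputs (ii) and (iii) to Proposition~\ref{conj8prop} are not available by the method you indicate, and it is not clear they hold at all; the paper's proof does not need them. Even if they were available, your use of Lemma~\ref{EulerLemma} would impose $\operatorname{char}(K)=0$, whereas Proposition~\ref{squareprop} carries no such hypothesis.
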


\begin{proof} 
Let $n=9$. Then $\alpha(I^{(r)})=3r$ and
$\operatorname{reg}(I^{(r)})=3r+1$ for $r\geq 1$ \cite{refHa1}. 
Thus $\alpha(I^{(t(m+1))})=3t(m+1)\geq 
3tm+2t=t\operatorname{reg}(I^{(m)})+t$
holds so by Lemma \ref{genlemma} we have $I^{(t(m+1))}\subseteq M^t(I^{(m)})^t$,
which verifies Conjecture \ref{refinedEV2}.
Also, $\alpha(I^{(t(m+1)-1)})=3tm+3t-3$, and 
$\operatorname{reg}((I^{(m)})^t)\leq t(3m+1)$ by \cite{refGGP}.
Consider $I^{(t(m+1)-1)}\subseteq (I^{(m)})^t$.
If $t=1$, we have $I^{(t(m+1)-1)}=I^{(m)}= (I^{(m)})^t$, so assume $t>1$.
For $i<3tm+3t-3$, we have 
$(I^{(t(m+1)-1)})_i=0\subseteq ((I^{(m)})^t)_i$, while for $i\geq 3tm+3t-3$, we have
$i\geq 3tm+3t-3\geq t(3m+1)\geq \operatorname{reg}((I^{(m)})^t)$, so
$(I^{(t(m+1)-1)})_i\subseteq (I^{(mt)})_i=((I^{(m)})^t)_i$. Thus 
$I^{(t(m+1)-1)}\subseteq (I^{(m)})^t$ holds and
$I^{(t(m+1)-1)}\subseteq M^{t-1}(I^{(m)})^t$ now follows from
Lemma \ref{genlemma}(b).

For $s>3$, the proof is similar except we use $\alpha(I^{(r)})>rs$ \cite{refN}
and $\operatorname{reg}(I^{(r)})\leq s(r+\frac{1}{2})$
\cite[Lemma 2.5]{refHHF}. For example,
$t(m+1)s\geq ts(m+\frac{1}{2})+t$ for $s\geq 2$, so
we have $\alpha(I^{(t(m+1))})>t(m+1)s\geq 
ts(m+\frac{1}{2})+t\geq 
t\operatorname{reg}(I^{(m)})+t$.
Thus $I^{(t(m+1))}\subseteq M^t(I^{(m)})^t$ holds by Lemma \ref{genlemma}.
Similarly, we conclude $I^{(t(m+1)-1)}\subseteq (I^{(m)})^t$
and $I^{(t(m+1)-1)}\subseteq M^{t-1}(I^{(m)})^t$.
\end{proof}

\begin{rem} For $n>9$ if we assume the well-known SHGH Conjecture
(that $\dim_K(I^{(m)})_t=\max(0,\binom{t+2}{2}-n\binom{m+1}{2})$,
then we get $\alpha(I^{(t(m+1))})\geq t(m+1)\sqrt{n}$
and $\operatorname{reg}(I^{(m)})<(m+(1/2))\sqrt{n}+1$. This 
implies $I^{(t(m+1))}\subseteq M^t(I^{(m)})^t$ holds
if $t((m+(1/2))\sqrt{n}+1)+t\leq t(m+1)\sqrt{n}$, and this is true for $n\geq 16$.
\end{rem}

For $n\geq 10$ generic points, using known estimates for $\alpha(I^{(m)})$ and $\operatorname{reg}(I^{(m)})$
we can verify Conjecture \ref{refinedEV2} for all $t$ for $1\leq m\leq \sqrt{n+1}-1$.

\begin{prop}
Let $I$ be the radical ideal of $n$ generic points in $\pr2$ where $n \geq 10$.  
Then $I^{(t(m+1))}\subseteq M^t(I^{(m)})^t$ for all $t$ for $1\leq m\leq \sqrt{n+1}-1$.
\end{prop}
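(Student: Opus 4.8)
The plan is to deduce this from Lemma~\ref{genlemma}(a). Taking $N=2$ there, it suffices to show, for every $t\ge 1$,
\[
\alpha\bigl(I^{(t(m+1))}\bigr)\ \ge\ t\operatorname{reg}\bigl(I^{(m)}\bigr)+t .
\]
Since $t$ appears as a common factor, the whole statement reduces to pairing a lower bound for $\alpha$ of the symbolic powers of $I$ (one linear in the exponent) against an upper bound for $\operatorname{reg}(I^{(m)})$.

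For the lower bound I would pass to a perfect square: set $s=\lfloor\sqrt n\rfloor$, so $s\ge 3$ because $n\ge 10$. Let $J$ be the radical ideal of any $s^2$ of the $n$ general points; these $s^2\ge 9$ points are again general, and $I\subseteq J$, so $I^{(j)}\subseteq J^{(j)}$ and hence $\alpha(I^{(j)})\ge\alpha(J^{(j)})$ for all $j$. By Nagata's theorem for $s\ge 4$ (see \cite{refN}), together with the classical computation $\alpha(J^{(j)})=3j$ for $9$ general points when $s=3$, one has $\alpha(J^{(j)})\ge sj$; in particular $\alpha(I^{(t(m+1))})\ge s\,t(m+1)$.

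For the upper bound I would invoke the known estimates for the Castelnuovo--Mumford regularity of ideals of general fat points in $\pr2$ that were already used in the proof of Proposition~\ref{squareprop} (e.g.\ \cite[Lemma~2.5]{refHHF}). What is needed is a bound of the form $\operatorname{reg}(I^{(m)})\le s(m+1)-1$, which those results provide precisely in the range $1\le m\le\sqrt{n+1}-1$. Granting this,
\[
\alpha\bigl(I^{(t(m+1))}\bigr)\ \ge\ s\,t(m+1)\ =\ t\bigl(s(m+1)-1\bigr)+t\ \ge\ t\operatorname{reg}\bigl(I^{(m)}\bigr)+t ,
\]
so Lemma~\ref{genlemma}(a) yields $I^{(t(m+1))}\subseteq M^t(I^{(m)})^t$, as desired.

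The main obstacle is this regularity estimate. The Nagata-type lower bound for $\alpha$ is robust and essentially free, but one needs an \emph{unconditional} upper bound on $\operatorname{reg}(I^{(m)})$ sharp enough to be at most $s(m+1)-1$; the loss (of order $\sqrt n$) inherent in the available unconditional regularity bounds for general fat points, as opposed to the sharper value predicted by the SHGH Conjecture, is exactly what confines $m$ to the interval $1\le m\le\sqrt{n+1}-1$, whereas under SHGH (cf.\ the Remark after Proposition~\ref{squareprop}) no restriction on $m$ is needed once $n\ge 16$. A minor point is to treat the borderline case $s=3$ of the $\alpha$-bound separately, since there $\alpha(J^{(j)})=3j$ is classical rather than an instance of Nagata's strict inequality.
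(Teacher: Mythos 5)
The idea of invoking Lemma~\ref{genlemma}(a) and comparing a linear-in-exponent lower bound for $\alpha(I^{(j)})$ with an upper bound for $\operatorname{reg}(I^{(m)})$ is exactly the paper's strategy, but both of your specific bounds are too weak to yield the stated range of $m$, and the regularity bound you lean on is not in fact available.

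First, the lower bound. Setting $s=\lfloor\sqrt n\rfloor$ and using Nagata on a sub-configuration of $s^2$ points only gives $\alpha(I^{(j)})\ge sj$. Since $s$ can be as much as (nearly) $1$ below $\sqrt n$, this throws away too much. The paper instead uses the Seshadri constant of the full set: $\varepsilon(n)\ge 1/\sqrt{n+1}$ (\cite[Propositions I.2(c), I.3]{refH2}), which gives $\alpha(I^{(j)})\ge jn/\sqrt{n+1}$, a number essentially equal to $j\sqrt n$. To see that the difference is fatal, take $n=15$: your bound gives $\alpha(I^{(j)})\ge 3j$, while the paper's gives $\ge 3.75\,j$. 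Plugging $\alpha(I^{(t(m+1))})\ge 3t(m+1)$ and the best available regularity bound (see below) $\operatorname{reg}(I^{(m)})\le 4(m+1)-2$ into Lemma~\ref{genlemma}(a) requires $3(m+1)\ge (4m+2)+1$, i.e.\ $m\le 0$, so you get nothing, whereas the paper's constants yield $m\le 3=\sqrt{16}-1$ as claimed. Passing to a sub-square loses a full unit in the coefficient precisely when $n$ sits just below a perfect square.

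Second, the regularity bound $\operatorname{reg}(I^{(m)})\le s(m+1)-1$ with $s=\lfloor\sqrt n\rfloor$ is not supplied by any of the cited results. The estimate $\operatorname{reg}(I^{(r)})\le s(r+\tfrac12)$ from \cite[Lemma~2.5]{refHHF} is for $n=s^2$ general points only; you cannot transfer it to an ideal of \emph{more} points since regularity can only go up under adding points, not down. For non-square $n>9$ the paper uses \cite[Corollary V.2.2(b)]{refH2}, which gives $\operatorname{reg}(I^{(m)})\le (m+1)/\varepsilon(n)-2\le (m+1)\sqrt{n+1}-2$; this is strictly larger than your target $s(m+1)-1$ once $m\ge 1$, because $\sqrt{n+1}>s$. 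The sentence claiming the needed regularity bound ``holds precisely in the range $1\le m\le \sqrt{n+1}-1$'' conflates the constraint produced at the end of the calculation with the hypothesis of a regularity theorem; $\operatorname{reg}(I^{(m)})$ is a fixed number for each $m$, and the only thing a range of $m$ can control is whether the final inequality $\alpha \ge t\operatorname{reg}+t$ is satisfied. In short: replace the floor-of-square-root trick by the Seshadri constant lower bound on $\alpha$, and replace the HHF regularity estimate by \cite[Corollary V.2.2(b)]{refH2}, and the computation closes exactly for $1\le m\le\sqrt{n+1}-1$ as in the paper.
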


\begin{proof}
By Proposition \ref{squareprop} we may as well assume that $n$ is not a square.  
Given a finite set $S\subset \pr 2$ of points, we recall that the Seshadri constant 
$\varepsilon(S)$ is the least real number 
such that $\alpha(I)\geq \varepsilon(S)(\sum_{P\in S}m_P)$
for all nonnegative integers $m_P$ (not all zero) for the ideal $I=\cap_{P\in S}I(P)^{m_P}$
(see for example \cite[Remark 8.2.3]{refB. et al}).
The Seshadri constant 
$\varepsilon(n)$ for $n$ generic points in the plane satisfies $\varepsilon(n)\geq1/\sqrt{n+1}$ by
\cite[Proposition I.2(c)]{refH2} and \cite[Proposition I.3]{refH2}.
This means that $\alpha(I^{t(m+1)})\geq t(m+1)n/\sqrt{n+1}$.
By \cite[Corollary V.2.2(b)]{refH2}, for $n>9$ not a square we 
have $\operatorname{reg}(I^{(m)})\leq \frac{m+1}{\varepsilon(n)}-2\leq (m+1)\sqrt{n+1}-2$.
But $t(m+1)n/\sqrt{n+1}\geq t((m+1)\sqrt{n+1}-2)+t$ holds for $1\leq m\leq \sqrt{n+1}-1$. 
Thus by Lemma \ref{genlemma} we have
$I^{(t(m+1))}\subseteq M^t(I^{(m)})^t$ for $1\leq m\leq \sqrt{n+1}-1$.
\end{proof}

\section{Additional results}\label{additional}
It's possible to give a partial verification of Conjecture \ref{refinedChud}.

\begin{prop}
Let $I$ be the radical ideal of a finite set of points in $\pr N$. Assume $\operatorname{char}(K)=0$.
\begin{itemize}
\item[(1)] If $r\le N$, then
$$\frac{\alpha(I)+N-1}{N}\leq\frac{\alpha(I^{(r)})}{r}.$$

\item[(2)] If $m\le r\le N$, then
$$\frac{\alpha(I^{(m)})+N-1}{m+N-1}\leq\frac{\alpha(I^{(r)})}{r}.$$

\item[(3)] If either $\alpha(I^{(m)})=m$, or $m\leq r$ and $r-m<N$, then
$$\frac{\alpha(I^{(m)})+N-1}{m+N-1}\leq\frac{\alpha(I^{(r)})}{r}.$$

\end{itemize}
\end{prop}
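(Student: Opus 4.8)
The plan is to reduce all three inequalities to elementary arithmetic, using Lemma \ref{EulerLemma} as the only substantive ingredient (this is the sole place the hypothesis $\operatorname{char}(K)=0$ will be needed). From $I^{(j+1)}\subseteq MI^{(j)}$ one gets $\alpha(I^{(j+1)})\ge \alpha(I^{(j)})+1$ for every $j\ge1$, and hence, by iteration, $\alpha(I^{(r)})\ge \alpha(I^{(m)})+(r-m)$ whenever $r\ge m\ge1$. Since $I$ is a proper ideal (the radical ideal of a nonempty finite set of points) we have $\alpha(I)\ge1$, so taking $m=1$ in the previous inequality gives the crude bounds $\alpha(I^{(k)})\ge k$ for all $k\ge1$. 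I claim these facts suffice.

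First I would isolate the mechanism common to parts (1), (2), and the $m\le r<m+N$ alternative of part (3). Writing $a=\alpha(I^{(m)})$, each of these amounts to showing $\frac{a+N-1}{m+N-1}\le\frac{\alpha(I^{(r)})}{r}$, where part (1) is the case $m=1$ (so that the left-hand side is $\frac{\alpha(I)+N-1}{N}$). Substituting the iterated Euler bound $\alpha(I^{(r)})\ge a+(r-m)$, it is enough to prove $\frac{a+N-1}{m+N-1}\le\frac{a+r-m}{r}$; clearing the (positive) denominators, this follows from the identity
\[
(a+r-m)(m+N-1)-r(a+N-1)=(m+N-1-r)(a-m),
\]
since both factors on the right are nonnegative --- the first because $r-m\le N-1$ (which is exactly $r\le N$ in part (1), is forced by $m\le r\le N$ in part (2), and is the hypothesis $r-m<N$ in part (3)), and the second because $a=\alpha(I^{(m)})\ge m$ by the crude bound.

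It then remains to treat part (3) under its other alternative, $\alpha(I^{(m)})=m$: here the left-hand side is $\frac{m+N-1}{m+N-1}=1$, while the right-hand side is $\frac{\alpha(I^{(r)})}{r}\ge\frac{r}{r}=1$ by the crude bound, with no hypothesis on $r$ required. That would complete all three parts.

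I do not anticipate a real obstacle here; the work is bookkeeping once Lemma \ref{EulerLemma} is in hand. The one step deserving a little care is checking the displayed identity and confirming that the standing hypotheses of each part force $m+N-1-r\ge0$. It is also worth observing that parts (1) and (2) are special cases of part (3), presumably recorded separately only because their hypotheses are the most transparent.
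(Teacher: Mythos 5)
Your proposal is correct and follows essentially the same route as the paper: both use Lemma \ref{EulerLemma} to get $\alpha(I^{(r)})\ge\alpha(I^{(m)})+(r-m)$ and $\alpha(I^{(m)})\ge m$, then reduce to the factorization $(a+r-m)(m+N-1)-r(a+N-1)=(a-m)(m+N-1-r)$, the paper phrasing it as a proof by contradiction and you phrasing it directly. Your observation that (1) and (2) are special cases of (3) matches the paper's remark that (1) is a special case of (2), and the handling of the $\alpha(I^{(m)})=m$ alternative is identical.
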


\begin{proof} 
Note that (1) is a special case of (2).
For (2), assume $(\alpha(I^{(m)})+N-1)/(m+N-1)>\alpha(I^{(r)})/r$.
We know $\alpha(I^{(r)})\ge \alpha(I^{(m)})+r-m$, by repeated application of Lemma \ref{EulerLemma}.
Thus we have $(\alpha(I^{(m)})+N-1)/(m+N-1)>(\alpha(I^{(m)})+r-m)/r$.
This simplifies to $(r-m)(\alpha(I^{(m)})-m)>(N-1)(\alpha(I^{(m)})-m)$,
which is impossible.

(3) If $\alpha(I^{(m)})=m$, then we must show $1\leq \alpha(I^{(r)})/r$, but this holds since 
$\alpha(I^{(r)})\geq r$. If $m\leq r$ and $r-m<N$, then the proof is the same as the proof of (2),
since $\alpha(I^{(m)})-m\geq0$. 
\end{proof}

Conjectures \ref{refinedEV} and \ref{refinedEV2} are much stronger
than what is currently known. The best general result
known along these lines for the radical ideal $I$ of a finite set of points in $\pr N$ is that
$$I^{(t(N+m-1)+1)}\subseteq M(I^{(m)})^t$$
for all $t,m\geq1$; this follows from \cite[Theorem 0.1(1)]{refTY}.
Here we strengthen this result if $\operatorname{char}(K)=0$.

\begin{prop}
Let $I$ be the radical ideal of a finite set of points in $\pr N$. Assume $\operatorname{char}(K)=0$.
Then
$$I^{(t(N+m-1)+s)}\subseteq M^s(I^{(m)})^t$$
for all $s,t,m\geq1$. 
\end{prop}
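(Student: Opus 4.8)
The plan is to deduce the statement by combining two tools already available in the paper: the Hochster--Huneke containment of Theorem \ref{HoHuThm} and the characteristic-zero ``Euler'' containment of Lemma \ref{EulerLemma}. The point is that Theorem \ref{HoHuThm} produces the target $(I^{(m)})^t$ on the right-hand side from the symbolic power of index $t(N+m-1)$, while Lemma \ref{EulerLemma}, iterated $s$ times, bridges the gap between the symbolic power of index $t(N+m-1)$ and the one of index $t(N+m-1)+s$, inserting the factor $M^s$ along the way.

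Concretely, first I would invoke Theorem \ref{HoHuThm} with the given $m$ and $t$ to get $I^{(t(N+m-1))}\subseteq (I^{(m)})^t$; this is the ``$s=0$'' case and fixes the right-hand side. Next I would climb up in the symbolic index: since $N\geq 1$ and $m,t\geq 1$ we have $t(N+m-1)\geq 1$, so every intermediate index appearing is positive and Lemma \ref{EulerLemma} applies at each stage, giving the telescoping chain $I^{(t(N+m-1)+s)}\subseteq M\,I^{(t(N+m-1)+s-1)}\subseteq M^2 I^{(t(N+m-1)+s-2)}\subseteq\cdots\subseteq M^s I^{(t(N+m-1))}$, where at each step one multiplies the previous inclusion through by $M$ (using that $A\subseteq B$ forces $M^kA\subseteq M^kB$). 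Chaining the two displays yields $I^{(t(N+m-1)+s)}\subseteq M^s I^{(t(N+m-1))}\subseteq M^s(I^{(m)})^t$, which is exactly the claim. (Alternatively, one may take the characteristic-free bound $I^{(t(N+m-1)+1)}\subseteq M(I^{(m)})^t$ of \cite{refTY} as the base case and then apply Lemma \ref{EulerLemma} only $s-1$ further times; this makes the result manifestly a strengthening of the bound quoted from \cite{refTY}.)

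The only step requiring any care is the iteration/composition in the second display, and it is entirely formal — the sole genuine input is the hypothesis $\operatorname{char}(K)=0$, which is what licenses Lemma \ref{EulerLemma} (whose proof differentiates forms and uses Euler's identity) and is precisely the feature that lets us improve the single factor of $M$ in \cite{refTY} to a full power $M^s$. So I do not anticipate a real obstacle beyond verifying that the telescoping chain composes correctly, which it does.
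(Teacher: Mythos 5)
Your proof is correct and is essentially identical to the paper's: iterate Lemma \ref{EulerLemma} $s$ times to get $I^{(t(N+m-1)+s)}\subseteq M^s I^{(t(N+m-1))}$, then apply Theorem \ref{HoHuThm} to conclude $M^s I^{(t(N+m-1))}\subseteq M^s(I^{(m)})^t$. The paper states it just as tersely, so there is nothing to add or correct.
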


\begin{proof} 
By Lemma \ref{EulerLemma} we have 
$I^{(Nt+(m-1)t+s)}\subseteq M I^{(Nt+(m-1)t+s-1)}$. Repeating and using 
Theorem \ref{HoHuThm} that
$I^{(Nt+(m-1)t)}\subseteq (I^{(m)})^t$, gives
$I^{(Nt+(m-1)t+s)}\subseteq M^s I^{(Nt+(m-1)t)}\subseteq M^s (I^{(m)})^t$.
\end{proof}

Finally, Conjecture \ref{EvoEssenconj2} 
has been verified in separate joint work by the second 
author for various special configurations
of points in ${\bf P}^2$ in characteristic 0
(the configurations in question consist of certain unions of sets of collinear points 
called line count configurations); see \cite{refCH}.

\end{document}